\newtheorem{theorem}{Theorem}[section]
\newtheorem{lemma}[theorem]{Lemma}
\newtheorem{proposition}[theorem]{Proposition}
\newtheorem{corollary}[theorem]{Corollary}
\newtheorem{definition}[theorem]{Definition}
\newtheorem{remark}[theorem]{Remark}
\def\dimh{{\rm dim}_{_{\rm H}}}
\def\R{{\mathbb R}}
\def\Q{{\mathbb Q}}
\def\E{{\mathbb E}}
\def\P{{\mathbb P}}
\newcommand{\op}{\operatorname}
\numberwithin{equation}{section}
\begin{document}

\title[Hitting probability, thermal capacity, and Hausdorff dimension results]{Hitting probabilities, thermal capacity, 
and Hausdorff dimension results for the Brownian sheet}
%Hitting Probabilities and Hausdorff Dimensions of the Brownian Sheet}

\author{Cheuk Yin Lee and Yimin Xiao}

\address[Cheuk Yin Lee]{School of Science and Engineering, The Chinese University of Hong Kong, Shenzhen, 
Guangdong 518172, China}
\email{leecheukyin@cuhk.edu.cn}

\address[Yimin Xiao]{Department of Statistics and Probability, Michigan State University, East Lansing, MI 48824, USA}
\email{xiao@stt.msu.edu}

%\date{\today}

\keywords{Brownian sheet; hitting probabilities; thermal capacity; Hausdorff dimension}

\subjclass[2010]{
60G17; %sample path properties
60G60; %random fields
60J45; %probabilistic potential theory
28A78%Hausdorff and packing measures
%28A80 %fractals
}

\begin{abstract}
Let $W= \{W(t): t \in \mathbb{R}_+^N \}$ be an $(N, d)$-Brownian sheet and let $E \subset (0, \infty)^N$ and $F \subset \mathbb{R}^d$ 
be compact sets. We prove a necessary and sufficient condition for $W(E)$ to intersect $F$ with positive probability and determine the essential supremum of the Hausdorff dimension of the intersection set $W(E)\cap F$ in terms of the thermal capacity of $E \times F$. 
This extends the previous results of Khoshnevisan and Xiao (2015) for the Brownian motion and Khoshnevisan and Shi (1999) for the 
Brownian sheet in the special case when $E \subset (0, \infty)^N$ is an interval. 
\end{abstract}

\maketitle

\section{Introduction}

Hitting probabilities and fractal properties are important features of stochastic processes and random fields. Let $\{ X(t) : t \in 
\mathbb{R}^N \}$ be a random field with values in $\mathbb{R}^d$ and $E \subset \mathbb{R}^N$, $F \subset \mathbb{R}^d$ be 
arbitrary compact sets. We are interested in the following two questions.
\begin{enumerate}
\item[(1)] What is a necessary and sufficient condition for $\P(X(E) \cap F \ne \varnothing) > 0$?
\item[(2)] What is the Hausdorff dimension $\dimh (X(E) \cap F)$ on the event $X(E) \cap F \ne \varnothing$?
\end{enumerate}

For the $d$-dimensional Brownian motion $\{B(t) : t \ge 0 \}$, Question (1) was answered by Watson 
\cite{Watson:TC,Watson} and Doob \cite{Doob} by using analytic and probabilistic methods, respectively. More precisely, 
Watson \cite{Watson:TC,Watson} developed the relevant potential theory and established several different characterizations of 
the polar sets in $\R^{d+1}$ for the heat equation, and Doob \cite{Doob} showed that a set $G \subset \R^{d+1}$ is polar for the 
heat equation if and only if the Markov process $\{(B(t), t): t \ge 0\}$ hits $G$ with zero probability from every starting point.
%They showed that  $\P(B(E) \cap F \ne \varnothing) > 0$ if and only if $E \times F$ has positive thermal capacity (see
% \eqref{def:TC} below for a definition).
Taylor and Watson \cite{Taylor:TW} provided a necessary condition and a sufficient condition for $\P(X(E) \cap F \ne \varnothing) > 0$ 
in terms of the Hausdorff dimension of $E \times F$ under a parabolic metric on $\R^{d+1}$.
When $E$ is an interval in $(0, \infty)^N$, Question (1) for the Brownian sheet was resolved by Khoshnevisan and Shi in their 
seminal paper \cite{KS1999}. Dalang and Nualart \cite{DN04} extended the results in \cite{KS1999} to the solutions to a class of 
nonlinear hyperbolic stochastic partial differential equations (SPDEs) in the plane. Further results about hitting probabilities for 
other Gaussian random fields or SPDEs for the special case that $E \subset (0, \infty)^N$ is an interval can be found in \cite{BLX09, 
DKN13, DP20, DP24, DS15, LSXY23, X09}. However, a complete answer to Question (1) had not been obtained for any random fields. 

For the Brownian motion, Question (2) was answered by Khoshnevisan and Xiao \cite{KX2015}, for all $d \ge 1$, in terms of 
thermal energy (see Definition \ref{thermal:cap} below) and, for $d\ge 2$, also in terms of the Hausdorff dimension of $E \times F$ 
under the parabolic metric.  As a consequence, they also obtained in \cite[Proposition 1.4]{KX2015} a criterion for $\P(B(E) \cap F 
\ne \varnothing) > 0$ that is slightly simpler than the one in  \cite{Doob, Watson:TC,Watson}.

In this paper, we extend the results in \cite{Doob,Watson:TC,Watson,KX2015} from the Brownian motion to the Brownian sheet $W= 
\{ W(t) : t \in \mathbb{R}^N_+ \}$. 
Our main results are Theorems \ref{T:hit} and \ref{T:dim:cap}, which provide answers to Questions (1) and (2) in terms of the 
thermal capacity in (\ref{def:TC}) for the Brownian sheet. A more explicit expression for the Hausdorff dimension of the intersection 
$W(E)\cap F$ is given in Theorem \ref{T:dim:dim-d}. We hope that the arguments developed in this paper 
will be useful for answering Questions (1) and (2) for other random fields, such as the solution to hyperbolic SPDEs in \cite{DN04}.

%when moving away from Brownian motion to the Brownian sheet or even other random fields. 
%We hope that this paper will shed light on how to answer questions (1) and (2) for other random fields.

Let $W= \{ W(t) : t \in \mathbb{R}^N_+ \}$ be an $(N, d)$-Brownian sheet defined on a complete probability space $(\Omega,
 \mathscr{F}, \P)$, that is, $W$ is an $N$-parameter, $\mathbb{R}^d$-valued continuous Gaussian random field with mean zero 
 and covariance
\[ 
\E(W_i(s) W_j(t)) = \begin{cases}
\prod_{k=1}^N \min\{s_k, t_k\} & \text{ if }i = j\\
0 & \text{ if }i \ne j.
\end{cases} 
\]
To study Questions (1) and (2) for $W$, we will make use of the following notions of thermal capacity and energy, which are natural 
extensions of (1.7) and (3.6) in \cite{KX2015} for $N = 1$.

\begin{definition}\label{thermal:cap}
Let $E \subset \R^N$ and $F \subset \R^d$ be compact sets.
For $\gamma \ge 0$, the $\gamma$-thermal capacity of the set $E\times F$ is defined by
\begin{equation}\label{def:TC}
C_\gamma(E \times F) = \frac{1}{\inf\{ \mathscr{E}_\gamma(\mu) : \mu\in P_0(E\times F)\}},
\end{equation}
where $P_0(E\times F)$ denotes the set of all Borel probability measures $\mu$ on $E\times F$ such that $\mu(\{t\} \times F) = 0$ 
for all $t\in E$ and $\mathscr{E}_\gamma(\mu)$ is the $\gamma$-thermal energy of $\mu$ defined by 
\begin{equation}\label{eq1.1}
\mathscr{E}_\gamma(\mu) = \int_{E \times F} \int_{E\times F} \frac{e^{-\|x - y\|^2/(2\|s - t\|)}}{\|s - t\|^{d/2} \|x - y\|^\gamma} \, 
\mu(ds\, dx) \, \mu(dt\, dy).
\end{equation}
Throughout the paper, $\|\cdot\|$ denotes the Euclidean norm on $\mathbb{R}^N$ or $\mathbb{R}^d$.
\end{definition}

Let $\rho$ denote the parabolic metric on $\mathbb{R}^N \times \mathbb{R}^d$ defined by
\begin{equation}\label{Def:rho} 
\rho((s, x), (t, y)) = \max\{\|s - t\|^{1/2}, \|x - y\|\}.
 \end{equation}
The thermal capacity $C_\gamma(E \times F)$ is related to the parabolic Hausdorff dimension $\dimh (E \times F; \rho)$ of 
$E \times F$ as well as the Hausdorff or packing dimensions of $E$ and $F$ (see Section 2 below).
% with respect to the parabolic metric $\rho$ on 
%$\mathbb{R}^N \times \mathbb{R}^d$ defined by
%\begin{equation}\label{Def:rho} 
%\rho((s, x), (t, y)) = \max\{\|s - t\|^{1/2}, \|x - y\|\}.
% \end{equation}
We denote Hausdorff and packing dimensions by  $\dimh$ and $\dim_{_{\rm P}}$, respectively, and refer to the excellent 
books by Falconer \cite{Fal} and Mattila \cite{Mattila} for their definitions and properties. 

When $F$ has positive Lebesgue measure, it is obvious that $\P(W(E) \cap F \ne \varnothing) > 0$ for every nonempty Borel 
set $E\subset (0,\infty)^N$. When $F$ has Lebesgue measure zero, we obtain a necessary and sufficient condition for 
$\P(W(E) \cap F \ne \varnothing) > 0$ in the following theorem, which gives an answer to Question (1) for the Brownian sheet.

\begin{theorem}\label{T:hit}
Fix $0<a<b<\infty$.
Then, there exists $K=K(N,d,a,b)\in (1,\infty)$ such that for all compact sets $E \subset (a, b)^N$ and $F \subset \{x \in \R^d : \|x\| < b\}$, 
where $F$ has Lebesgue measure zero, we have
\begin{align}\label{hp:bd}
\frac{1}{K} C_0(E\times F) \le \P(W(E) \cap F \ne \varnothing) \le K C_0(E\times F).
\end{align}
As a consequence, $\P(W(E) \cap F \ne \varnothing) > 0$ if and only if $C_0(E \times F) > 0$, and we have
\begin{equation}\label{Eq:TW}
\P(W(E) \cap F \ne \varnothing) \begin{cases}
> 0 & \text{ if } \dimh (E \times F; \rho) > d,\\
= 0 & \text{ if } \dimh (E \times F; \rho) < d.
\end{cases}
 \end{equation}
%Let $E \subset (0, \infty)^N$, $F \subset \mathbb{R}^d$ be compact sets.
%Suppose $F$ has Lebesgue measure zero.
%Then $\P(W(E) \cap F \ne \varnothing) > 0$ if and only if $C_0(E \times F) > 0$.
%As a consequence, we have
%\begin{equation}\label{Eq:TW}
%\P(W(E) \cap F \ne \varnothing) \begin{cases}
%> 0 & \text{ if } \dimh (E \times F; \rho) > d,\\
%= 0 & \text{ if } \dimh (E \times F; \rho) < d.
%\end{cases}
% \end{equation}
\end{theorem}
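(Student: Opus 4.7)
The plan is to establish (\ref{hp:bd}) using the capacity--energy framework developed by Khoshnevisan and Xiao \cite{KX2015} for Brownian motion, adapted to the multi-parameter sheet. The central new ingredient is a uniform Gaussian density estimate for $W$ restricted to $(a,b)^N$: since the covariance $\E[W_i(s)W_i(t)] = \prod_k \min(s_k,t_k)$ lies in $[a^N,b^N]$ and one checks
\[ c_1\|s-t\| \;\le\; \E\bigl[(W_i(s)-W_i(t))^2\bigr] \;\le\; c_2\|s-t\|, \qquad s,t \in [a,b]^N, \]
a straightforward computation of the joint covariance matrix of $(W_i(s),W_i(t))$ produces two-sided bounds on the one-point density $p_s(x)$ on $E\times B(0,b)$ and an upper bound
\[ p_{s,t}(x,y) \;\le\; \frac{K_1}{\|s-t\|^{d/2}}\exp\!\Bigl(-\frac{\|x-y\|^2}{K_2\|s-t\|}\Bigr), \qquad s\ne t\in[a,b]^N,\ x,y\in B(0,b), \]
with constants $K_1,K_2$ depending only on $a,b,N,d$. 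Any mismatch between the exponent $1/K_2$ and the normalized $1/2$ in the definition of $\mathscr{E}_0$ will be absorbed into the final $K$ using a comparison of thermal capacities with different positive exponents, to be established in Section 2.

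For the lower bound in (\ref{hp:bd}) I would apply the Paley--Zygmund second-moment method. Given $\delta>0$, choose $\mu\in P_0(E\times F)$ with $\mathscr{E}_0(\mu)\le(1+\delta)/C_0(E\times F)$ and consider
\[ J_\eps := \int_{E\times F}\mathbf{1}_{\{\|W(s)-x\|<\eps\}}\,\mu(ds\,dx). \]
The lower density bound gives $\E[J_\eps] \ge c_3\eps^d$ uniformly for small $\eps$, while the joint density bound and the capacity comparison just mentioned give $\E[J_\eps^2] \le K_3\eps^{2d}\mathscr{E}_0(\mu)$. Paley--Zygmund then produces
\[ \P(W(E)\cap F^\eps\ne\varnothing) \;\ge\; \frac{(\E J_\eps)^2}{\E[J_\eps^2]} \;\ge\; \frac{1}{K}\,C_0(E\times F), \]
and sending $\eps\downarrow 0$, using continuity of $W$ and compactness of $F$, yields the left inequality in (\ref{hp:bd}).

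The right inequality in (\ref{hp:bd}) is the main obstacle. I would construct, for each dyadic level $n\ge 1$, a random \emph{empirical} probability measure $\nu_n$ supported on centers of parabolic cubes $Q\times B$ of time-side $2^{-2n}$ and space-side $2^{-n}$ that are hit by $W$ (i.e.\ $W(Q\cap E)\cap B\ne\varnothing$), normalized so that the $\nu_n$-mass of $E\times F$ is comparable to $1$ on the hitting event. The aim is to extract a weak subsequential limit $\nu\in P_0(E\times F)$ satisfying $\mathscr{E}_0(\nu)\le K'/\P(W(E)\cap F\ne\varnothing)$ by lower semicontinuity of the thermal energy, which immediately gives $C_0(E\times F)\ge \P/K'$. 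The technical heart of this step is a two-point cube-hitting estimate of the form
\[ \P\bigl(W(Q)\cap B\ne\varnothing,\ W(Q')\cap B'\ne\varnothing\bigr) \;\le\; K_4\,\P\bigl(W(Q)\cap B\ne\varnothing\bigr)\cdot \frac{(2^{-n})^d\,e^{-\|x-y\|^2/(K_2\|s-t\|)}}{\|s-t\|^{d/2}} \]
for disjoint cubes $Q\times B,\,Q'\times B'$ centered at $(s,x),(t,y)$: it controls the double-hitting probability by the single-hitting probability times the thermal kernel, bringing $\mathscr{E}_0(\nu_n)$ within a constant multiple of $1/\P$. In the interval case of \cite{KS1999} such an estimate exploits a Markov property of the sheet along a single coordinate; for a general compact $E\subset(a,b)^N$ with arbitrarily placed time-cubes, the estimate must instead be derived directly from the joint Gaussian density bound, and this derivation is the principal hurdle.

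Finally, (\ref{Eq:TW}) follows by combining (\ref{hp:bd}) with the Frostman-type equivalence $C_0(E\times F)>0 \Longleftrightarrow \dimh(E\times F;\rho)>d$, which itself is obtained via a comparison of the thermal kernel with the parabolic Riesz kernel $\rho((s,x),(t,y))^{-d}$, as to be discussed in Section 2.
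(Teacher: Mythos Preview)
Your lower-bound argument via the second-moment method is essentially the paper's approach, and your two-point density estimate is equivalent to what the paper obtains through the pinned-sheet decomposition $W(t)=W_s(t)+C_{s,t}W(s)$ together with $\sigma_{s,t}^2\asymp\|s-t\|$. The paper packages the ``constant absorption'' you mention (matching the exponent $1/K_2$ to $1/2$) into a Fourier-analytic comparison lemma (Lemma~\ref{lemma4.2}), but your capacity-comparison idea would serve the same purpose.

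The upper bound, however, has a real gap. You propose to build a random empirical measure on hit parabolic cubes and pass to a weak limit with $\mathscr{E}_0(\nu)\le K'/\P$, driven by the two-point cube-hitting estimate --- and you yourself label that estimate ``the principal hurdle'' without resolving it. There are two separate problems. First, the conditional form you need, $\P(\text{hit}_1,\text{hit}_2)\le K_4\,\P(\text{hit}_1)\cdot(\text{thermal kernel})\cdot 2^{-nd}$, is a strong-Markov-type statement; for $N\ge 2$ the time cubes $Q,Q'$ sit in one of $2^N$ relative orthants and no single filtration sees both, so a ``direct derivation from the joint Gaussian density bound'' does not yield this factored inequality. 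Second, even granting the estimate, you have not explained how it forces $\mathscr{E}_0(\nu_n)\lesssim 1/\P$: the energy $\mathscr{E}_0(\nu_n)=\frac{1}{N_n^2}\sum_{i,j}K(i,j)Y_iY_j$ carries the random normalization $1/N_n^2$, and the two-point estimate only controls the numerator in expectation, not the ratio. Converting this into a pathwise (or in-probability) energy bound on the hitting event is precisely the step that, in the multiparameter setting, requires a maximal inequality.

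The paper takes a different route that sidesteps both issues. It defines $\mu$ as the conditional law of $(\tau,W(\tau))$ given the hitting event, where $\tau$ is a lexicographically minimal hitting time; sets $Z_\varepsilon(\mu)=\int\phi_\varepsilon(W(t)-y)\,\mu(dt\,dy)$; and, for each $\pi\subset\{1,\dots,N\}$, lower-bounds $\E[Z_\varepsilon(\mu)\mid\mathscr{F}_\pi(s)]$ using the independence of the pinned sheet $\{W_s(t):t\succcurlyeq_\pi s\}$ from $\mathscr{F}_\pi(s)$. Because the filtrations $\{\mathscr{F}_\pi(s)\}$ are \emph{commuting}, Cairoli's maximal inequality bounds $\E\bigl[\sup_s\E[Z_\varepsilon(\mu)\mid\mathscr{F}_\pi(s)]^2\bigr]\le 4^N\E[Z_\varepsilon(\mu)^2]$. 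Evaluating the supremum along a sequence $s_m\to\tau$, squaring, and summing over all $2^N$ choices of $\pi$ produces an inequality of the form $I_\varepsilon\gtrsim\P(G)\,I_\varepsilon^2$, hence $\mathscr{E}_0(\mu)\le c/\P(G)$. The commuting-filtration/Cairoli machinery is the multiparameter substitute for the strong Markov property and is the ingredient missing from your sketch.

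A minor point: your final sentence states $C_0(E\times F)>0\Longleftrightarrow\dimh(E\times F;\rho)>d$, but only the one-sided implications of Proposition~\ref{prop2.1} hold (either outcome is possible at the critical value $d$); this does not affect the derivation of \eqref{Eq:TW}.
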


When $W$ is $d$-dimensional Brownian motion, \eqref{Eq:TW} was proved by Taylor and Watson \cite{Taylor:TW}. 
Unlike the case of Brownian motion, which has 
stationary and independent increments, we need to consider the pinned Brownian sheet introduced in \cite{KX2007} and make use of its properties to prove this result.
The commuting property of the filtration associated with the Brownian sheet is also needed in order to apply Cairoli's maximal inequality 
in the proof.

The Hausdorff dimension of $W(E) \cap F$ is a random variable that is in general nonconstant on the event $\{W(E) \cap F \ne 
\varnothing\}$, as was pointed out in \cite{KX2015}.
To answer Question (2), we compute the $L^\infty(\P)$-norm of $\dimh(W(E)\cap F)$.
When $F$ has positive Lebesgue measure, a simple formula is available (see Proposition \ref{prop4.5} below):
\[\|\dimh(W(E)\cap F)\|_\infty = \min\{d, 2\dimh{E}\}. \]
In general, we have the following result:

\begin{theorem}\label{T:dim:cap}
For any compact sets $E \subset (0, \infty)^N$ and $F \subset \mathbb{R}^d$, regardless of whether $F$ has zero or positive Lebesgue measure, we have
\begin{equation}\label{eq1.4}
\|\dimh (W(E) \cap F)\|_\infty = \sup\{ \gamma > 0 : C_\gamma(E \times F) > 0 \}.
\end{equation}
\end{theorem}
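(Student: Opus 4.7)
Let $\gamma^{*} := \sup\{\gamma > 0 : C_\gamma(E \times F) > 0\}$. The plan is to show both $\|\dimh(W(E)\cap F)\|_\infty \ge \gamma^{*}$ and $\|\dimh(W(E)\cap F)\|_\infty \le \gamma^{*}$, extending the $N=1$ arguments of \cite{KX2015} to the sheet by means of the thermal kernel estimates made available by Theorem~\ref{T:hit} and the pinned-sheet technology of \cite{KX2007}.

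For the lower bound, fix any $\gamma < \gamma^{*}$ and choose $\mu \in P_0(E \times F)$ with $\mathscr{E}_\gamma(\mu) < \infty$. The plan is to realize an approximate energy measure on the intersection by setting, for $\eps > 0$,
\[
\nu_\eps(A) := \int_{E\times F} \mathbf{1}_A\!\bigl(W(t)\bigr)\, p_\eps\!\bigl(W(t)-x\bigr)\, \mu(dt\,dx),
\qquad A \subset \R^d \text{ Borel},
\]
where $p_\eps$ is the Gaussian density with covariance $\eps I_d$, and then passing to a weak subsequential limit. Three uniform-in-$\eps$ estimates are needed: (i) $\E[\nu_\eps(\R^d)] \ge c_1 > 0$, (ii) $\E[\nu_\eps(\R^d)^2] \le c_2$, and (iii) $\E\!\bigl[\iint \|u-v\|^{-\gamma}\,\nu_\eps(du)\,\nu_\eps(dv)\bigr] \le c_3\, \mathscr{E}_\gamma(\mu)$. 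The driving pointwise inequality is
\[
\E\!\bigl[p_\eps(W(s)-x)\, p_\eps(W(t)-y)\, \|W(s)-W(t)\|^{-\gamma}\bigr]
\le C\,\frac{\exp\!\bigl(-\|x-y\|^2/(2c\|s-t\|)\bigr)}{\|s-t\|^{d/2}\|x-y\|^\gamma},
\]
which follows from the joint Gaussian law of $(W(s),W(t))$ together with the comparison $\mathrm{Var}(W_j(s)-W_j(t)) \asymp \|s-t\|$ valid for $s,t \in [a,b]^N$; the condition $\mu(\{t\}\times F)=0$ ensures $\|W(s)-W(t)\|^{-\gamma}$ is integrable against $\mu\otimes\mu$ in expectation. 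Paley--Zygmund then yields $\inf_{\eps}\P(\nu_\eps(\R^d) \ge c_1/2) > 0$, and a weak subsequential limit $\nu$ is supported on $W(E) \cap F$, nontrivial with positive probability, and of finite Riesz $\gamma$-energy by Fatou. Frostman's lemma gives $\dimh(W(E)\cap F) \ge \gamma$ on $\{\nu(\R^d) > 0\}$, so $\|\dimh(W(E)\cap F)\|_\infty \ge \gamma$; letting $\gamma \uparrow \gamma^{*}$ finishes this direction.

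For the upper bound, $\gamma > \gamma^{*}$ forces $C_\gamma(E \times F) = 0$, and we want $\dimh(W(E)\cap F) \le \gamma$ almost surely. I would argue by contradiction: if $\P(\dimh(W(E)\cap F) > \gamma') > 0$ for some $\gamma' \in (\gamma^{*},\gamma)$, then Frostman's lemma supplies a random probability measure $\sigma_\omega$ on $W(\omega,E)\cap F$ with finite Riesz $\gamma'$-energy on a positive-probability event $A$. The plan is to lift $\sigma_\omega$ to a deterministic measure on $E \times F$ with finite $\gamma'$-thermal energy, contradicting $C_{\gamma'}(E \times F) = 0$. The lift measurably selects a preimage $T(x) \in W^{-1}(x) \cap E$, forms $\mu_\omega := \int \delta_{T(x)} \otimes \delta_x\, \sigma_\omega(dx)$, and averages over $\omega$ on $A$; conditioning out $W$ via the pinned-sheet conditional law from \cite{KX2007} should produce the heat-kernel factor $\|s-t\|^{-d/2}\exp(-\|x-y\|^2/(2\|s-t\|))$ appearing in \eqref{eq1.1}, so that the Riesz $\gamma'$-energy of $\sigma_\omega$ controls the $\gamma'$-thermal energy of the deterministic lift. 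The case of $F$ with positive Lebesgue measure is covered by Proposition~\ref{prop4.5}.

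The principal obstacle is the upper-bound lift. One must average the intersection measure $\sigma_\omega$ over its preimage $W^{-1}(\cdot)\cap E$ in a way that is both measurable and produces Gaussian conditional factors matching the thermal kernel. For $N = 1$ (Brownian motion), \cite{KX2015} uses first-hitting-time selection, which breaks down for $N \ge 2$; the plan is to replace it with the pinned Brownian sheet and the commuting filtration property already exploited in the proof of Theorem~\ref{T:hit}, which together furnish the required Markov/conditional structure in the sheet setting.
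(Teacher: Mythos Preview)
Your lower-bound sketch is reasonable and would likely succeed with care, but it is not the route the paper takes; more importantly, your upper bound has a genuine gap.

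First, a misattribution: for $N=1$, \cite{KX2015} does \emph{not} prove the dimension formula by a first-hitting lift. The first-hitting construction in \cite{KX2015} appears in the hitting-probability estimate (the analogue of Theorem~\ref{T:hit}), not in their Theorem~1.3. Their dimension proof, like the present paper's, is a \emph{codimension argument}: one introduces an independent $n$-parameter additive $\alpha$-stable process $X$ on $\R^d$ and uses the equivalence (Proposition~\ref{prop4.1}) that $G\subset\R^d$ meets $X(\R^n_+\setminus\{0\})$ with positive probability iff ${\EuScript Cap}_{d-\alpha n}(G)>0$. The work is then to prove (Proposition~\ref{thm6.1}) that $\P\bigl(W(E)\cap F\cap X(\R^n_+\setminus\{0\})\ne\varnothing\bigr)>0$ iff $C_{d-\alpha n}(E\times F)>0$. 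Because this is an \emph{iff}, both inequalities for $\|\dimh(W(E)\cap F)\|_\infty$ follow symmetrically by conditioning on $W$ and varying $(n,\alpha)$ so that $d-\alpha n$ sweeps $(0,d)$.

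Now the gap in your upper bound. Suppose $\sigma_\omega$ is a random probability on $W(E)\cap F$ with finite Riesz $\gamma'$-energy, and $\mu_\omega$ its lift to $E\times F$ via a measurable selection $T(x)\in W^{-1}(x)\cap E$. You propose to average to get a deterministic $\mu$ and claim its $\gamma'$-thermal energy is controlled by $\E[\text{Riesz energy of }\sigma_\omega]$, with the heat-kernel factor $\|s-t\|^{-d/2}e^{-\|x-y\|^2/(2\|s-t\|)}$ emerging from ``conditioning out $W$.'' But once you average to form a deterministic $\mu$, the thermal-energy integrand is purely deterministic; there is no randomness left to condition on. If instead you keep $\mu_\omega$ random and try to bound $\mathscr{E}_{\gamma'}(\mu_\omega)$ pathwise, observe that on $\mathrm{supp}\,\mu_\omega$ one has $x=W(s)$, $y=W(t)$, so by H\"older continuity the exponential is harmless, leaving you to control $\iint \|s-t\|^{-d/2}\|x-y\|^{-\gamma'}\,\mu_\omega\,\mu_\omega$. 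The Riesz energy of $\sigma_\omega$ gives no handle on the factor $\|s-t\|^{-d/2}$; indeed, the ability to absorb that factor is exactly what distinguishes $\gamma^*$ from $\dimh F$ (cf.\ Proposition~\ref{thm2.3} and Remark~\ref{R:gamma*}, where $\gamma^*<\dimh F$ whenever $2\dimh E<d$). So without further structure the lift cannot succeed.

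The paper's codimension route sidesteps this entirely. The ``hard'' direction of Proposition~\ref{thm6.1} (that intersection with $X$ forces $C_{d-\alpha n}(E\times F)>0$) is what replaces your lift: one builds a measure on $E\times F$ from an explicit first-passage construction using \emph{both} $W$ and $X$, applies Cairoli's maximal inequality to the commuting filtration $\mathscr{H}^\pi_{s,u}=\mathscr{F}^\pi_s\vee\mathscr{G}_u$ (Lemma~\ref{lemma6.2}), and the occupation kernel of $X$ supplies exactly the Riesz factor $\|x-y\|^{-(d-\alpha n)}$ needed to complete the thermal energy. This is the missing mechanism your sketch lacks.
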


This result extends Theorem 1.3 of \cite{KX2015}.
The proof of \eqref{eq1.4} is based on a codimension argument and a hitting probability result for additive stable processes.
The codimension argument allows us to find the Hausdorff dimension of $W(E) \cap F$ by checking whether this set intersects the range of an $n$-parameter $d$-dimensional additive $\alpha$-stable process $X$ independent of $W$.
This reduces the proof of Theorem \ref{T:dim:cap} to determining when $\P(W(E) \cap F \cap X(\mathbb{R}^n \setminus\{0\}) \ne \varnothing)$ is positive (see Proposition \ref{thm6.1} below).

The formula \eqref{eq1.4} holds 
%regardless of whether $F$ has zero or positive Lebesgue measure, and it holds for 
in all dimensions $N \ge 1$ and $d \ge 1$. % and both cases $d \ge 2N$ and $d < 2N$.
Our next result provides an alternative formula when $d \ge 2N$, and extends Theorem 1.1 of \cite{KX2015}.
%When $d \ge 2N$, we obtain the following result:

\begin{theorem}\label{T:dim:dim-d}
If $d \ge 2N$, then
\begin{equation}\label{eq1.5}
 \|\dimh (W(E) \cap F)\|_\infty = \dimh (E \times F; \rho) - d
\end{equation}
for any compact sets $E \subset (0,\infty)^N$ and $F\subset \R^d$.
\end{theorem}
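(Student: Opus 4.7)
The natural strategy is to combine Theorem~\ref{T:dim:cap} with the purely analytic identification
\[
\sup\{\gamma > 0 : C_\gamma(E \times F) > 0\} = \dimh(E \times F; \rho) - d \quad (\text{valid when } d \ge 2N),
\]
from which Theorem~\ref{T:dim:dim-d} follows by substitution into \eqref{eq1.4}. I would expect a version of this capacity/dimension identity to be available from the Section~2 preliminaries, where the connection between $C_\gamma(E \times F)$ and $\dimh(E \times F; \rho)$ is announced.

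For the direction $\dimh(E \times F; \rho) > d + \gamma \Rightarrow C_\gamma(E \times F) > 0$, the plan is to pick $\beta \in (d + \gamma, \dimh(E \times F; \rho))$ and apply Frostman's lemma in the parabolic metric to produce a Borel probability measure $\mu$ on $E \times F$ with $\mu(B_\rho(z, r)) \le c\, r^\beta$ for all $z$ and $r > 0$. Since $\{t\} \times F$ can be covered by $O(r^{-d})$ parabolic balls of radius $r$, one obtains $\mu(\{t\} \times F) \le c\, r^{\beta - d} \to 0$, so $\mu \in P_0(E \times F)$. I would then split the thermal energy integral at $\|x - y\| = \|s - t\|^{1/2}$: on the parabolic-diagonal region $\|x - y\| \le \|s - t\|^{1/2}$ the thermal kernel is dominated by $c\, \rho^{-(d+\gamma)}$, so the Frostman estimate gives integrability; on the off-diagonal region, the Gaussian factor $e^{-\|x-y\|^2/(2\|s-t\|)}$ provides enough decay. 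This yields $\mathscr{E}_\gamma(\mu) < \infty$ and therefore $C_\gamma(E \times F) > 0$.

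The harder direction is $C_\gamma(E \times F) > 0 \Rightarrow \dimh(E \times F; \rho) \ge d + \gamma$. Given $\mu \in P_0(E \times F)$ with $\mathscr{E}_\gamma(\mu) < \infty$, the plan is to show the parabolic $(d + \gamma - \varepsilon)$-Riesz energy of $\mu$ is finite for every $\varepsilon > 0$; the standard capacity/dimension theory in the parabolic metric then delivers $\dimh(E \times F; \rho) \ge d + \gamma$. The main obstacle is the off-diagonal region $\|x - y\| > \|s - t\|^{1/2}$, where the thermal kernel decays exponentially but the parabolic Riesz kernel is only algebraic: pointwise kernel comparison must fail and a covering or redistribution argument is needed. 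Precisely here the hypothesis $d \ge 2N$ should enter, via the parabolic volume bound $r^{2N + d}$ together with the $P_0$ constraint on $\mu$, which in combination permit the off-diagonal Riesz energy to be absorbed into the finite thermal energy. The restriction $d \ge 2N$ is also sharp, since for $d < 2N$ one can arrange $\dimh W(E) \le d < \dimh(E \times F; \rho) - d$, so the identity \eqref{eq1.5} cannot hold in general.
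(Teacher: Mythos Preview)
Your overall strategy --- prove the analytic identity $\gamma^* = \dimh(E\times F;\rho)-d$ directly and then invoke Theorem~\ref{T:dim:cap} --- is a reasonable idea in principle, but it is \emph{not} the route the paper takes, and your execution has two concrete problems.

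First, you have the two directions reversed. The implication $C_\gamma(E\times F)>0 \Rightarrow \dimh(E\times F;\rho)\ge d+\gamma$ (equivalently, $\gamma^*\le \dimh(E\times F;\rho)-d$) is already established in Proposition~\ref{thm2.3} with no hypothesis on $d$ and $N$; it is not the hard direction. The direction that genuinely requires $d\ge 2N$ is the \emph{lower} bound $\gamma^*\ge \dimh(E\times F;\rho)-d$, i.e.\ exactly the Frostman-to-energy step you label as routine. A clean counterexample for $d<2N$: take $E=[a,b]^N$ and $F=\{x_0\}$. Then $\dimh(E\times F;\rho)=2N$, so $\dimh(E\times F;\rho)-d=2N-d>0$, yet $\gamma^*\le\dimh F=0$ by Proposition~\ref{thm2.3}. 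Thus the Frostman step fails without $d\ge 2N$, and your sketch does not explain where that hypothesis enters.

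Second, the kernel bound you assert on the region $\|x-y\|\le\|s-t\|^{1/2}$ is false. There $\rho=\|s-t\|^{1/2}$ and the thermal kernel equals $\|s-t\|^{-d/2}\|x-y\|^{-\gamma}e^{-\|x-y\|^2/(2\|s-t\|)} \le \rho^{-d}\|x-y\|^{-\gamma}$, which is \emph{not} dominated by $c\,\rho^{-(d+\gamma)}$ when $\|x-y\|\ll\rho$. So the Frostman mass bound on parabolic balls alone does not control $\mathscr{E}_\gamma(\mu)$; one needs an anisotropic covering argument comparing rectangles $\{\|s-t\|^{1/2}\asymp 2^{-k},\ \|x-y\|\asymp 2^{-j}\}$, and it is precisely in summing over $j\ge k$ that the condition $d\ge 2N$ appears. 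This is not ``available from the Section~2 preliminaries'': Proposition~\ref{thm2.3} gives only the upper bound for $\gamma^*$, and the paper in fact derives the parabolic Frostman identity \eqref{pa:Frostman} as a \emph{corollary} of Theorems~\ref{T:dim:cap} and~\ref{T:dim:dim-d}, not as input to them.

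For comparison, the paper's proof of Theorem~\ref{T:dim:dim-d} bypasses the analytic identity entirely. It follows the template of \cite[Theorem~1.1]{KX2015}, replacing Kaufman's uniform dimension theorem by the uniform dimension result for the Brownian sheet (valid exactly when $d\ge 2N$; see \cite{Mountford,KWX06}), and establishing the necessary uniform hitting estimates for $W$ and for an auxiliary additive stable process via Cairoli's maximal inequality. The hypothesis $d\ge 2N$ enters through the uniform dimension theorem, not through any capacity computation.
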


Theorems \ref{T:dim:cap} and \ref{T:dim:dim-d} together imply the following ``parabolic Frostman theorem''.

\begin{corollary}
If $d \ge 2N$, then
\begin{align}\label{pa:Frostman}
\sup\{\gamma > 0 : C_\gamma(E\times F) > 0\}
= \dimh (E\times F; \rho) - d
\end{align}
for any compact sets $E \subset (0,\infty)^N$ and $F\subset \R^d$.
\end{corollary}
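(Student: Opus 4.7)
The plan is immediate: the corollary is just the equality of the two expressions for $\|\dimh(W(E) \cap F)\|_\infty$ that are furnished by Theorems \ref{T:dim:cap} and \ref{T:dim:dim-d}. Under the hypothesis $d \ge 2N$, both theorems apply simultaneously, so their right-hand sides must agree.

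Concretely, I would fix compact sets $E \subset (0,\infty)^N$ and $F \subset \mathbb{R}^d$, and appeal first to Theorem \ref{T:dim:cap} to write
\[
\|\dimh (W(E) \cap F)\|_\infty = \sup\{\gamma > 0 : C_\gamma(E \times F) > 0\},
\]
which holds for all $N,d \ge 1$. Since we are assuming $d \ge 2N$, Theorem \ref{T:dim:dim-d} also gives
\[
\|\dimh (W(E) \cap F)\|_\infty = \dimh (E \times F; \rho) - d.
\]
Combining these two identities yields \eqref{pa:Frostman}, and the proof is complete.

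There is essentially no obstacle here: the result is a pure corollary, with all the work hidden in the probabilistic and potential-theoretic machinery behind Theorems \ref{T:dim:cap} and \ref{T:dim:dim-d}. The only thing worth flagging is the interpretive point that this equality is deterministic and purely geometric (it no longer mentions $W$), so one may phrase the corollary as a ``Frostman-type'' identification of parabolic Hausdorff dimension with the critical order of thermal capacity on product sets $E \times F$ with $d \ge 2N$.
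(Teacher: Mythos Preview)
Your proposal is correct and matches the paper's own justification exactly: the paper simply states that Theorems \ref{T:dim:cap} and \ref{T:dim:dim-d} together imply the corollary, and your argument spells out precisely this combination.
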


In the case that $d < 2N$, \eqref{eq1.5} does not hold in general, as was pointed out in \cite{KX2015} for $N=1$. In this case,
Proposition \ref{thm2.3} and Remark \ref{R:gamma*} below may be useful for evaluating or estimating the quantity 
$\sup\{ \gamma > 0 : C_\gamma(E \times F) > 0 \}$ in \eqref{eq1.4}. It is also possible to derive a partial result by 
extending the uniform dimension result of Balka and Peres \cite{BP17} to the Brownian sheet and adapting the argument 
in Erraoui and Hakiki \cite{EH25}. Namely, we can prove that if $d < 2N$ and the Assouad dimension of $E$ is at most $d/2$, then 
\eqref{eq1.5}  still holds. Since the proof of this result will require some space, and the essence of the method differs from that of 
the present paper, we will provide the details elsewhere. Despite these partial results, finding an explicit formula like \eqref{pa:Frostman} 
for this quantity when $d<2N$ remains an open problem even for $N=1$ (i.e., $W$ is Brownian motion). 
We believe that this quantity in general cannot be determined by $\dimh(E\times F; \rho)$ and the dimensions of $E$ and $F$ alone.
%This also provides a partial answer to Problem 2 raised in \cite[p.432]{KX2015}. 
%\textcolor{blue}{We conjecture that a complete formula is given by
%\[ 
%\sup\{ \gamma > 0 : C_\gamma(E \times F) > 0 \} = \max\{ 0, \min\{\dimh  F, \, \dimh (E \times F; \rho) - d\} \}. 
%\]
%A refined lower bound for \eqref{eq1.2} is needed to prove this formula.}\footnote{It is likely this formula may not hold, when $E$ is irregular.}

The rest of the paper is organized as follows.
In Section 2, we establish some properties of the thermal capacity and bounds for $\sup\{\gamma>0 : C_\gamma(E\times F)>0\}$.
In Section 3, we prove Theorem \ref{T:hit}.
In Section 4, we gather necessary ingredients for the codimension argument and study the intersection of $W(E) \cap F$ with the range of an independent additive stable process.
In Section 5, we consider the Hausdorff dimension of $W(E)\cap F$ and prove Theorems \ref{T:dim:cap} and \ref{T:dim:dim-d}.

Throughout this paper, $c, c_1, c_2, \dots$ denote strictly positive and finite constants. We will use notations such as $c_{\ref{5.8}}$ to 
refer to the specific constant appearing in display \eqref{5.8}, etc.

\section{Thermal capacity}

In this section, we establish some properties of the thermal capacity $C_\gamma(E \times F)$ and give bounds for the quantity 
$\sup\{\gamma>0: C_\gamma(E\times F)>0\}$ in terms of the dimensions of $E$ and $F$.

Recall that $\rho$ denotes the parabolic metric (see \eqref{Def:rho}). 
For any $\alpha\ge 0$ and $A \subset \R^N \times \R^d$, the $\alpha$-dimensional parabolic Hausdorff measure of $A$ is defined by
\[
\mathcal{H}^\alpha(A; \rho) = \lim_{\delta \to 0} \inf\left\{ \sum_{n=1}^\infty (\mathrm{diam}_\rho \,U_n)^\alpha : \text{open cover } (U_n)_{n=1}^\infty \text{ of } A \,, \,\sup_n\, (\mathrm{diam}_\rho \,U_n) \le \delta \right\},
\]
where $\mathrm{diam}_\rho \,U$ denotes the diameter of $U$ with respect to the metric $\rho$.
The parabolic Hausdorff dimension of $A$ is defined by
\[
\dimh(A; \rho) = \inf\{ \alpha \ge 0 : \mathcal{H}^\alpha(A; \rho) = 0 \}.
\]

From the definition of thermal capacity (see Definition \ref{thermal:cap}), it follows that if $C_\gamma(E \times F) > 0$, then 
$C_{\gamma'}(E \times F) > 0$ for all $\gamma' < \gamma$, and if $C_\gamma(E \times F) = 0$, then $C_{\gamma''}
(E \times F) = 0$ for all $\gamma'' > \gamma$.

The following proposition provides a convenient connection between the thermal capacity $C_0$ and the parabolic Hausdorff dimension.
When $\dimh (E\times F; \rho) = d$, either $C_0(E \times F) > 0$ or $C_0(E \times F) = 0$ 
could happen for some choices of $E$ and $F$.
\begin{proposition}\label{prop2.1}
Let $E \subset \mathbb{R}^N$ and $F \subset \mathbb{R}^d$ be compact sets. Then
\[ 
C_0(E \times F) \begin{cases}
> 0 & \text{ if }\dimh (E\times F; \rho) > d,\\
= 0 & \text{ if }\dimh (E \times F; \rho) < d.
\end{cases} \]
\end{proposition}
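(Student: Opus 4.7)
The plan is to compare the thermal kernel $K_0((s,x),(t,y)) := e^{-\|x-y\|^2/(2\|s-t\|)}/\|s-t\|^{d/2}$ with the parabolic Riesz kernel $\rho^{-d}$. A direct case analysis on whether $\|x-y\|^2 \le \|s-t\|$ gives the one-sided estimate $K_0 \le c_1 \rho^{-d}$: in the first regime $\rho = \|s-t\|^{1/2}$ and $K_0 \le \|s-t\|^{-d/2} = \rho^{-d}$, while in the second, writing $\lambda = \|x-y\|^2/\|s-t\| > 1$, one has $K_0 = \lambda^{d/2} e^{-\lambda/2} \rho^{-d}$ with $\lambda^{d/2} e^{-\lambda/2}$ bounded on $[1, \infty)$. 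The reverse inequality fails due to exponential damping when $\|x-y\|^2 \gg \|s-t\|$, and managing this asymmetry is the heart of the second direction.

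For the implication $\dimh(E\times F; \rho) > d \Rightarrow C_0(E\times F) > 0$, I would pick $\beta$ with $d < \beta < \dimh(E\times F; \rho)$ and apply Frostman's lemma on the compact metric space $(E\times F, \rho)$ to obtain a Borel probability measure $\mu$ on $E\times F$ satisfying $\mu(B_\rho(z,r)) \le C r^\beta$ for all $z,r$. Membership $\mu \in P_0(E\times F)$ follows because $F \subset \R^d$ forces $\dimh F \le d < \beta$; the parabolic and Euclidean diameters agree on $\{t\}\times F$, so $\dimh(\{t\}\times F;\rho) = \dimh F < \beta$, and a $\beta$-Frostman measure is null on sets of $\rho$-dimension strictly less than $\beta$. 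A standard layer-cake computation then yields $\int\int \rho^{-d}\, d\mu\, d\mu < \infty$, which combined with the kernel bound gives $\mathscr{E}_0(\mu) < \infty$, hence $C_0(E\times F) > 0$.

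For the converse $\dimh(E\times F;\rho) < d \Rightarrow C_0(E\times F) = 0$, I plan to establish the scale-invariant key lemma: there is $c = c(N,d) > 0$ such that $\mathscr{E}_0(\mu) \ge c\, r^{-d}$ for every Borel probability measure $\mu$ supported in a parabolic ball $B_\rho(z_0, r)$. Granting this, since $\mathcal{H}^d(E\times F; \rho) = 0$, for each $\eps > 0$ cover $E\times F$ by essentially disjoint parabolic balls $B_i = B_\rho(z_i, r_i)$ with $\sum_i r_i^d < \eps$. For any $\mu \in P_0(E\times F)$, writing $p_i = \mu(B_i)$ and using essential disjointness,
\[
\mathscr{E}_0(\mu) \ge \sum_i \mathscr{E}_0(\mu|_{B_i}) = \sum_i p_i^2\, \mathscr{E}_0(\mu|_{B_i}/p_i) \ge c \sum_i p_i^2 / r_i^d
\]
by the lemma applied to the normalized restrictions. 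The Cauchy--Schwarz inequality $1 = (\sum_i p_i)^2 \le (\sum_i r_i^d)(\sum_i p_i^2/r_i^d)$ then gives $\mathscr{E}_0(\mu) \ge c/\eps$, and letting $\eps \downarrow 0$ forces $\mathscr{E}_0(\mu) = \infty$ for every $\mu \in P_0(E\times F)$, so $C_0(E\times F) = 0$.

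The main obstacle is proving the key lemma. After rescaling to $r=1$, $z_0=0$, I would use the Fourier representation
\[
K_0((s,x),(t,y)) = (2\pi)^{-d/2} \int_{\R^d} e^{-\|s-t\||\xi|^2/2}\, e^{i\xi\cdot(x-y)}\, d\xi.
\]
Disintegrating $d\mu = d\nu(s)\, d\mu_s(x)$ and setting $\phi_s(\xi) = \int e^{i\xi\cdot x}\, d\mu_s(x)$, the quadratic form $Q(\xi) := \int\int e^{-\|s-t\||\xi|^2/2}\, \phi_s(\xi)\overline{\phi_t(\xi)}\, d\nu(s)\, d\nu(t)$ is nonnegative, because $(s,t) \mapsto e^{-\|s-t\|a}$ is positive-definite for each $a \ge 0$. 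For $|\xi| \le 1/2$ the support condition $|x|, |y| \le 1$ yields $\Re\phi_s(\xi) \ge \cos(1/2)$ and $|\Im\phi_s(\xi)| \le 1/2$, whence $\Re[\phi_s(\xi)\overline{\phi_t(\xi)}] \ge \cos^2(1/2) - 1/4 > 0$ and $Q(\xi) \ge c_0\, e^{-1/4}$. Integrating over $\{|\xi| \le 1/2\}$ then lower-bounds $\mathscr{E}_0(\mu)$ by a positive constant depending only on $N,d$. The remaining technical subtlety is justifying the Fubini exchange between the $\xi$-integral and the $\mu\otimes\mu$-integral (trivial when $\mathscr{E}_0(\mu) = \infty$, and handled otherwise by a monotone truncation $\{|\xi| \le R\}$ where the integrand is bounded).
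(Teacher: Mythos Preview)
Your first direction (Frostman plus the pointwise bound $K_0 \le c\,\rho^{-d}$) is exactly what the paper does, including the verification that $\mu \in P_0(E\times F)$.

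For the second direction your strategy is genuinely different from the paper's. The paper argues by contradiction: assuming some $\mu \in P_0$ has $\mathscr{E}_0(\mu)<\infty$, it shows via an upper-density analysis (splitting into the cone regions $C_r^n$ and the paraboloid region $D_r$) that $\mu$ is a parabolic $d$-Frostman measure, forcing $\mathcal{H}^d(E\times F;\rho)=\infty$. Your route---a scale-invariant lower bound $\mathscr{E}_0(\mu)\ge c\,r^{-d}$ for $\mu$ supported in a $\rho$-ball of radius $r$, followed by a covering/Cauchy--Schwarz argument---is the classical capacity approach and is perfectly sound in outline; the ``essentially disjoint'' cover is easily arranged by disjointifying $A_i = B_i\setminus(B_1\cup\cdots\cup B_{i-1})$.

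There is, however, a real gap in your proof of the key lemma. Your truncation argument establishes, for each $R$, the identity $\int\!\!\int K_0^R\,d\mu\,d\mu = (2\pi)^{-d/2}\int_{|\xi|\le R}Q(\xi)\,d\xi$, and the right side increases to $(2\pi)^{-d/2}\int Q$ by monotone convergence. But you never justify that the \emph{left} side converges to $\mathscr{E}_0(\mu)$: the truncated kernel $K_0^R$ is not monotone in $R$, it is not dominated by $K_0$ (when $\|x-y\|^2\gg\|s-t\|$ the Gaussian tail of $K_0$ is tiny while $|K_0^R|$ can be as large as $\|s-t\|^{-d/2}$), and the only uniform dominator $\|s-t\|^{-d/2}$ need not be $\mu\otimes\mu$-integrable. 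Fatou gives $\mathscr{E}_0(\mu)\le(2\pi)^{-d/2}\int Q$, the wrong inequality for your purpose. The clean fix is to bypass the disintegration entirely: view $K_0$ as a translation-invariant kernel on $\R^{N+d}$, set $K_0(0,\cdot)=0$ so that it is lower semicontinuous, tempered, and positive definite (its full Fourier transform is $(2\pi)^{d/2}\widehat{P}_\xi(\eta)\ge 0$ with $P_\xi(s)=e^{-\|s\|\,|\xi|^2/2}$), and invoke Lemma~\ref{lemma4.1} directly to get $\mathscr{E}_0(\mu)=(2\pi)^{-(N+d)}\int_{\R^{N+d}}\widehat{K}_0(\eta,\xi)\,|\widehat\mu(\eta,\xi)|^2\,d\eta\,d\xi$; then lower-bound the integrand on a region such as $\{|\eta|\le 1/2,\ 1/4\le|\xi|\le 1/2\}$ exactly as you intended.
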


\begin{proof}
Suppose $\dimh (E \times F; \rho) > d' > d$. By Frostman's lemma (cf. \cite[Theorem 8.17]{Mattila}), we can find a Borel probability 
measure $\mu$ on $E \times F$ and $C \in (0,\infty)$ such that 
\begin{align}\label{frostman}
\mu(B_\rho((s, x), r)) \le C r^{d'} \quad \text{for all $s \in E, x \in F$ and $r>0$ small,}
\end{align}
where $B_\rho((s, x),r)$ denotes the open ball centered at $(s, x)$ with radius $r$ with respect to the metric $\rho$. 
Note that \eqref{frostman} implies that $\mu(\{s\} \times F) = 0$ for all $s \in E$.
Indeed, since $d'>d$, for any $\varepsilon>0$, $F$ can be covered by a (finite or infinite) sequence of open balls $B(x_n, r_n)$ in $\R^d$ 
under the Euclidean metric such that $\sum r_n^{d'} \le \varepsilon$.
Then the balls $B_\rho((s,x_n),r_n)$ form a cover for $\{s\}\times F$ and it follows from \eqref{frostman} that $\mu(\{s\} \times F) \le \sum r_n^{d'} \le \varepsilon$.
This shows that $\mu(\{s\} \times F) = 0$.
Moreover, since $E \times F$ is compact, there exists $n_0 \in \mathbb{Z}$ such that $\rho((t,y),(s,x)) < 2^{-n_0}$ for all $(s,x), (t,y) \in E\times F$, and hence
%\begin{align*}
%\int_{E\times F} \int_{E\times F} \frac{1}{\max\{ \|s - t\|^{1/2}, \|x - y\| \}^{d}} \, \mu(ds\, dx) \, \mu(dt\, dy) < \infty.
%\end{align*}
\begin{align}\begin{split}\label{iint:finite}
&\int_{E\times F} \int_{E\times F} \frac{1}{\max\{ \|s - t\|^{1/2}, \|x - y\| \}^{d}} \, \mu(ds\, dx) \, \mu(dt\, dy)\\
& \le \sum_{n=n_0}^\infty \int_{E \times F} \left[ \int_{\substack{(t,y) \in E\times F:\\ \frac{1}{2^{n+1}} \le \rho((t,y),(s,x)) < \frac{1}{2^n}}} 
\frac{\mu(dt\, dy)}{2^{-(n+1)d}} \right] \mu(ds\, dx) \\
& \le  \sum_{n=n_0}^\infty \int_{E\times F} 2^{(n+1)d} \mu(B_\rho((s,x),2^{-n})) \mu(ds\, dx) < \infty,
\end{split}\end{align}
where the last quantity is finite because of \eqref{frostman} and $d'>d$.
Note that there exists a constant $0 < c < \infty$ such that $e^{-x/2} \le c \,x^{-d/2}$ for all $x \ge 0$, so this together with \eqref{iint:finite} implies that
\[ \iint \frac{e^{-\|x - y\|^2/(2\|s - t\|)}}{\|s - t\|^{d/2}} \, \mu(ds\, dx) \, \mu(dt\, dy) \le \iint \frac{c\, \mu(ds\, dx) \, \mu(dt\, dy)}{\max\{ \|s - t\|^{1/2}, \|x - y\| \}^d} 
 < \infty \]
and hence $C_0(E \times F) > 0$. 

Suppose $\dimh (E \times F; \rho) < d$. 
We assume, towards a contradiction, that $C_0(E \times F) > 0$.
Then we can find a probability measure $\mu\in P_0(E\times F)$ such that 
\[ 
\mathscr{E}_0(\mu) = \int_{E\times F} \int_{E\times F} \frac{e^{-\|x - y\|^2/(2\|s - t\|)}}{\|s - t\|^{d/2}} \, \mu(ds\, dx)\, \mu(dt\, dy) < \infty. 
\]
Let 
\begin{align*}
&G_1 = \left\{ (s, x) \in E \times F : \sup_{n\ge 0}\limsup_{r \to 0} \frac{\mu(C_r^n(s, x))}{(2^{-n}r)^{d}} > 0 \right\},\\
& G_2 = \left\{ (s, x) \in E \times F : \limsup_{r \to 0} \frac{\mu(D_r(s, x))}{r^{d}} > 0 \right\},
\end{align*}
where
\[ 
C_r^n(s, x) = \left\{ (t, y) \in E \times F : \frac{1}{2^{n+1}} < \frac{\|s - t\|^{1/2}}{\|x-y\|} \le \frac{1}{2^n} \text{ and } \|x - y\| \le r \right\}
 \]
and
\[ D_r(s, x) = \left\{ (t, y) \in E \times F : \|x - y\| \le \|s - t\|^{1/2} \le r \right\}. \]
Next, we show that $\mu((E\times F) \setminus (G_1 \cup G_2)) = 1$.
In fact, if $(s, x) \in G_1$, then we can find some $n \ge 0$, $a = a(n) > 0$ and a sequence of positive $r_i = r_i(n) \downarrow 0$ 
such that $\mu(C_{r_i}^n(s, x)) \ge a (2^{-n}r_i)^d$. 
Since $\mu(\{s\}\times F) = 0$, we can find $0<q_i<r_i$ such that $\mu(C_{q_i,r_i}^n(s,x)) \ge a (2^{-n}r_i)^d /2$, where
\[
	C_{q,r}^n(s,x) = \left\{ (t,y) \in E \times F: \frac{1}{2^{n+1}} < \frac{\|s-t\|^{1/2}}{\|x-y\|} \le \frac{1}{2^n} \text{ and } q < \|x-y\| \le r \right\}.
\]
By taking a subsequence if necessary, we may assume that $r_{i+1} < q_i$, so that the sets $C_{q_i,r_i}^n(s, x)$ are disjoint. Then 
\begin{align*}
\int_{E\times F} \frac{e^{-\|x - y\|^2/(2\|s - t\|)}}{\|s - t\|^{d/2}} \, \mu(dt\, dy)
&\ge \sum_{i=1}^\infty \int_{C_{q_i,r_i}^n(s, x)} \frac{e^{-\|x - y\|^2/(2\|s - t\|)}}{\|s - t\|^{d/2}} \, \mu(dt\, dy)\\
& \ge \sum_{i=1}^\infty \frac{e^{-2^n}}{(2^{-n}r_i)^{d}}\, \frac{a (2^{-n}r_i)^{d}}{2} = \infty.
\end{align*}
It follows that $\mu(G_1) = 0$ because $\mathscr{E}_0(\mu) < \infty$ implies that the integral on the left-hand side above is finite for $\mu$-a.e. $(s, x)$.

If $(s, x) \in G_2$, then we can find $a > 0$ and a sequence of positive $r_i \downarrow 0$ such that $\mu(D_{r_i}(s, x)) \ge a r_i^d$. 
Since $\mu(\{s\} \times F) = 0$, we can find $0 < q_i < r_i$ such that $\mu(D_{q_i, r_i}(s, x)) \ge a r_i^d /2 $, where
\[ D_{q, r}(s, x) = \left\{ (t, y) \in E \times F : \|x - y\| \le \|s - t\|^{1/2}
\text{ and } q < \|s - t\|^{1/2} \le r \right\}. \] 
By taking a subsequence if needed, we may assume that $r_{i+1} < q_i$, so that the sets $D_{q_i, r_i}(s, x)$ are disjoint. Then 
\begin{align*}
\int \frac{e^{-\|x - y\|^2/(2\|s - t\|)}}{\|s - t\|^{d/2}} \, \mu(dt\, dy)
& \ge \sum_{i=1}^\infty \int_{D_{q_i, r_i}(s, x)} \frac{e^{-\|x - y\|^2/(2\|s - t\|)}}{\|s - t\|^{d/2}} \, \mu(dt\, dy)\\
& \ge \sum_{i=1}^\infty \frac{e^{-1/2}}{r_i^{d}}\, a r_i^{d}/2 = \infty.
\end{align*}
This implies that $\mu(G_2) = 0$.
Therefore, we have $\mu(G) = 1$, where $G = (E \times F)\setminus (G_1 \cup G_2)$. 

Fix an arbitrary $\lambda > 0$. For any $0 < \delta < 1$, let
\[ G_\delta = \left\{ (s, x) \in G : \mu(C_r^n(s, x)) \le \lambda (2^{-n}r)^d, \mu(D_r(s, x)) \le \lambda\, r^d \text{ for all } n \ge 0, r \in (0, \delta] \right\}. \]
Let $\{U_i\}_{i=1}^\infty$ be an open cover of $E \times F$ (and thus of $G_\delta$) with $\mathrm{diam}_\rho \, U_i \le \delta$. 
Write $r_i = \op{diam}_\rho U_i$. If $U_i$ contains 
a point $(s_i, x_i)$ of $G_\delta$, then
\begin{align*}
U_i \cap (E \times F) &\subset \left\{ (t, y) \in E \times F : \|s_i - t\|^{1/2} \le r_i, \|x_i - y\| \le r_i \right\}\\
&= \bigcup_{n=0}^\infty C_{r_i}^n(s_i, x_i)\cup D_{r_i}(s_i, x_i)
\end{align*}
and hence
\begin{align*}
\mu(U_i \cap (E \times F)) &\le \sum_{n=0}^\infty \mu(C_{r_i}^n(s_i, x_i)) + \mu(D_{r_i}(s_i, x_i))\\
& \le \sum_{n=0}^\infty \lambda (2^{-n}r_i)^d + \lambda \, r_i^d = \lambda \, c \, r_i^d
\end{align*}
for some constant $0 < c < \infty$. It follows that
\[ \mu(G_\delta) \le \sum_{i : U_i \cap G_\delta \ne \varnothing} \mu(U_i \cap (E \times F)) \le \lambda \, c \sum_{i=1}^\infty (\op{diam}_\rho U_i)^d. 
\]
Take infimum over all open covers $\{U_i\}_{i=1}^\infty$ of $E \times F$ with $\mathrm{diam}_\rho \, U_i \le \delta$, and then 
let $\delta \downarrow 0$ along a sequence $\delta_n \downarrow 0$.
Since $G \subset \bigcup_{n=1}^\infty G_{\delta_n}$, we have $1 = \mu(G) \le \lambda \, c \, \mathcal{H}^d(E \times F; \rho)$. Since $\lambda > 0$ 
is arbitrary, we have $\mathcal{H}^d(E \times F; \rho) = \infty$ and hence $d \le \dimh (E\times F; \rho)$, which is a contradiction.
\end{proof}

\begin{proposition}\label{prop2.2}
Let $E \subset \mathbb{R}^N$ and $F \subset \mathbb{R}^d$ be compact sets. Then
\[ \dimh  F + 2 \dimh  E \le \dimh (E \times F; \rho) \le \min\{ \dimh  F + 2 \dim_{_{\rm P}} E, \dim_{_{\rm P}} F + 2 \dimh  E \}. 
\]
\end{proposition}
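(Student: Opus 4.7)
The plan is to prove the two inequalities separately, exploiting the fact that the parabolic metric $\rho$ is the $\ell^\infty$-product of the ``square-root'' metric $d_p(s,t)=\|s-t\|^{1/2}$ on $\R^N$ and the Euclidean metric on $\R^d$. Since covering $E$ by $d_p$-balls of radius $r$ is the same as covering $E$ by Euclidean balls of radius $r^2$, one has $\dimh(E,d_p)=2\dimh E$ and $\dim_{_{\rm P}}(E,d_p)=2\dim_{_{\rm P}} E$, so the claim amounts to the classical product formulas $\dimh A + \dimh B \le \dimh(A\times B)\le \dimh A + \dim_{_{\rm P}} B$ (and its symmetric version) in this metric-space setting. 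I will present the argument directly in the parabolic geometry rather than via a black-box invocation.

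For the lower bound, I would fix $\alpha<\dimh E$ and $\beta<\dimh F$ and invoke Frostman's lemma to produce probability measures $\mu_E$ on $E$ and $\mu_F$ on $F$ with finite $\alpha$- and $\beta$-Riesz energies. A quick case analysis on whether $\|s-t\|^{1/2}\ge\|x-y\|$ shows that $\rho((s,x),(t,y))^{2\alpha+\beta}\ge\|s-t\|^\alpha\|x-y\|^\beta$, so the parabolic $(2\alpha+\beta)$-energy of the product measure $\mu_E\otimes\mu_F$ factors into a product of two Riesz energies and is therefore finite. The Frostman converse gives $\dimh(E\times F;\rho)\ge 2\alpha+\beta$, and letting $\alpha\uparrow\dimh E$, $\beta\uparrow\dimh F$ yields the first inequality.

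For the upper bound $\dimh(E\times F;\rho)\le \dimh F+2\dim_{_{\rm P}} E$, fix $\alpha>\dim_{_{\rm P}} E$ and $\beta>\dimh F$, and, using that packing dimension is the countable-stable regularization of upper box dimension, write $E=\bigcup_k E_k$ with $\overline{\dim}_B E_k<\alpha$. For each $k$ I would pick a Euclidean cover $\{B(y_j,s_j)\}$ of $F$ with $\sum_j s_j^\beta$ as small as desired, and at each scale $s_j^2$ cover $E_k$ by at most $C s_j^{-2\alpha}$ Euclidean balls. The resulting product balls have $\rho$-diameter bounded by a constant times $s_j$, and $\sum_j s_j^{-2\alpha}\cdot s_j^{2\alpha+\beta}=\sum_j s_j^\beta$, so $\mathcal{H}^{2\alpha+\beta}(E_k\times F;\rho)=0$; a union over $k$ and taking $\alpha\downarrow\dim_{_{\rm P}} E$, $\beta\downarrow\dimh F$ gives the bound. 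The symmetric inequality $\le \dim_{_{\rm P}} F+2\dimh E$ follows by swapping the roles: decompose $F=\bigcup_k F_k$ with $\overline{\dim}_B F_k<\beta$, cover $E$ at Euclidean scale $r_j$ with $\sum_j r_j^\alpha$ small, and cover each $F_k$ at scale $r_j^{1/2}$ by at most $C r_j^{-\beta/2}$ balls.

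The main subtlety is matching the two scales through the $\rho$-metric: a parabolic ball of radius $r$ corresponds to a Euclidean $\R^N$-ball of radius $r^2$ and a Euclidean $\R^d$-ball of radius $r$, which is precisely what produces the factor $2$ on the $E$-dimension in both bounds. Aside from this bookkeeping the computations are routine; the step that requires the most care is the decomposition into countably many pieces of finite upper box dimension, needed to handle the asymmetry between $\dimh$ and $\dim_{_{\rm P}}$ on the two factors.
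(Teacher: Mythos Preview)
Your proposal is correct and follows essentially the same route as the paper, which omits the proof and simply refers to Theorem 8.10 in Mattila \cite{Mattila}. Your argument is precisely the adaptation of that proof to the parabolic metric: the upper bound via the decomposition $E=\bigcup_k E_k$ with $\overline{\dim}_B E_k<\alpha$ and a covering at matched scales is Mattila's proof of $\dimh(A\times B)\le \dimh A+\dim_{_{\rm P}} B$ verbatim, and your lower bound via Frostman energies is a standard variant of Mattila's mass-distribution argument (one could equally take Frostman measures with $\mu_E(B(s,r))\le Cr^{\alpha}$, $\mu_F(B(x,r))\le Cr^{\beta}$ and observe $(\mu_E\otimes\mu_F)(B_\rho((s,x),r))\le C^2 r^{2\alpha+\beta}$, which is slightly more direct but equivalent).
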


\begin{proof}
The proof is similar to that of Theorem 8.10 in \cite{Mattila} and is thus omitted.
\end{proof}

\begin{proposition}\label{thm2.3}
Let $E \subset \mathbb{R}^N$, $F \subset \mathbb{R}^d$ be compact sets.
Denote 
\[
\gamma^* = \sup\{ \gamma > 0 : C_\gamma(E \times F) > 0 \},
\]
with the convention that $\sup \varnothing = 0$. 
If $\dimh (E \times F; \rho) \le d$, then $\gamma^* = 0$; if $\dimh (E \times F; \rho) > d$, then
\begin{equation}\label{eq2.1}
 \min\{ \dimh  F, \dimh  F + 2\dimh  E - d \} \le \gamma^* \le \min\{ \dimh F, \dimh (E \times F; \rho) - d\}.
\end{equation}
\end{proposition}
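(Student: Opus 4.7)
The plan is to treat the three parts of the proposition separately, using the monotonicity of $C_\gamma$ in $\gamma$ (noted just before Proposition~\ref{prop2.1}) to interpret $\gamma^*$ as the threshold at which $C_\gamma$ transitions from positive to zero. When $\dimh(E\times F;\rho) < d$, Proposition~\ref{prop2.1} gives $C_0(E\times F) = 0$ and hence $\gamma^* = 0$. The boundary case $\dimh(E\times F;\rho) = d$ will be absorbed into the upper bound $\gamma^* \le \dimh(E\times F;\rho) - d$ established below.

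For the lower bound in the case $\dimh(E\times F;\rho) > d$, I would fix $0 < \gamma < \min\{\dimh F, \dimh F + 2\dimh E - d\}$. By Frostman's lemma, choose exponents $s_E < \dimh E$ and $s_F < \dimh F$ close enough that $\gamma < s_F$ and $\gamma < s_F + 2s_E - d$, and take Frostman measures $\mu_E$ on $E$ and $\mu_F$ on $F$ with $\mu_E(B(s,r)) \le c\,r^{s_E}$ and $\mu_F(B(x,r)) \le c\,r^{s_F}$. The product $\mu = \mu_E \otimes \mu_F$ belongs to $P_0(E\times F)$ because $\mu_E$ is atomless. A standard dyadic shell decomposition of $F$ around each $x$, using the Frostman bound on $\mu_F$ and handling the regions $\|x-y\| \le \sqrt{u}$ and $\|x-y\| > \sqrt{u}$ separately, should yield
\[
\int_F\int_F \frac{e^{-\|x-y\|^2/(2u)}}{\|x-y\|^\gamma}\, \mu_F(dx)\,\mu_F(dy) \le c\, u^{(s_F - \gamma)/2}
\]
for every $u > 0$. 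Plugging this into $\mathscr{E}_\gamma(\mu)$ reduces the computation to a standard Riesz $((d+\gamma-s_F)/2)$-energy of $\mu_E$, which is finite because $s_E > (d+\gamma-s_F)/2$. Hence $C_\gamma(E\times F) > 0$, and letting $s_E \uparrow \dimh E$ and $s_F \uparrow \dimh F$ delivers the lower bound.

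For the upper bound, start from any $\mu \in P_0(E\times F)$ with $\mathscr{E}_\gamma(\mu) < \infty$. For the inequality $\gamma^* \le \dimh F$, project to the $F$-coordinate: after the disintegration $\mu(ds\,dx) = \mu(ds|x)\nu(dx)$ with $\nu = \pi_F\mu$, write
\[
\mathscr{E}_\gamma(\mu) = \int_{F\times F} \|x-y\|^{-\gamma}\, H(x,y)\, \nu(dx)\nu(dy),
\]
where $H(x,y) = \iint \|s-t\|^{-d/2}\, e^{-\|x-y\|^2/(2\|s-t\|)}\,\mu(ds|x)\mu(dt|y)$. Since $H > 0$ pointwise $\nu\otimes\nu$-a.e., there exist $\varepsilon > 0$ and a subset $A \subset F$ of positive $\nu$-measure from which one can extract a probability $\sigma$ on $F$ with finite Riesz $\gamma$-energy, forcing $\gamma \le \dimh F$ via Frostman's theorem. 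For the remaining inequality $\gamma^* \le \dimh(E\times F;\rho) - d$, I would adapt the second half of the proof of Proposition~\ref{prop2.1} to the target exponent $d+\gamma$: the sets $G_1, G_2$ are redefined so that for $\mu$-a.e.\ $(s,x)$ outside them, the shell measures $\mu(C_r^n(s,x))$ and $\mu(D_r(s,x))$ satisfy refined decay rates whose contributions are summable in $n$, yielding $\mu(U \cap (E\times F)) \le \lambda c\,r^{d+\gamma}$ for every cover ball $U$ of $\rho$-diameter $r$, so that the mass distribution principle forces $\mathcal{H}^{d+\gamma}(E\times F;\rho) = \infty$.

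The main obstacle is the last adaptation. Inside $C_r^n(s,x)$ the ratio $\|s-t\|^{1/2}/\|x-y\|$ is of order $2^{-n}$; on the ``Case b'' region $\|s-t\|^{1/2} < \|x-y\|$ (large $n$) the Gaussian factor in $K_\gamma$ decays doubly exponentially in $n$, so the naive bound on $\mu(C_r^n)$ obtained from the finiteness of $\int K_\gamma\,d\mu$ carries a factor $e^{2^{2n+1}}$ that destroys the shell sum. Overcoming this will require combining a further dyadic splitting in the $\|x-y\|$-direction (which produces an extra geometric factor $2^{m\gamma}$ from the $\|x-y\|^{-\gamma}$ term) with a separate treatment of Case b exploiting the $P_0$ condition to quantitatively control the $\mu$-mass of the thin time-slab $\{(t,y):\|s-t\| \le \varepsilon\}$ as $\varepsilon \to 0$.
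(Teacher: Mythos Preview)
Your lower bound argument is correct and amounts to the paper's Frostman-product construction (the paper splits into the cases $2\dimh E > d$ and $2\dimh E \le d$, bounding the exponential by $1$ or by a suitable power of $\|s-t\|/\|x-y\|^2$; your shell estimate handles both at once). The disintegration route to $\gamma^* \le \dimh F$, however, has a genuine gap: from $\iint \|x-y\|^{-\gamma} H(x,y)\,\nu(dx)\,\nu(dy) < \infty$ with $H > 0$ a.e.\ you cannot extract a probability on $F$ with finite Riesz $\gamma$-energy, since $\{H \ge \varepsilon\}$ need not contain any product $A\times A$ of positive measure and $H$ can vanish at the diagonal (e.g.\ if $\mu(\cdot\,|x) = \delta_{s(x)}$ with $\|s(x)-s(y)\| \ll \|x-y\|^2$, the Gaussian annihilates $H$). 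The paper does not project to $F$ at all: it stays on $E\times F$, introduces $B_r(s,x) = \{(t,y): \|x-y\| \le \min(\|s-t\|^{1/2}, r)\}$ on which the kernel is bounded below by $c/r^\gamma$ (because there $e^{-\|x-y\|^2/(2\|s-t\|)} \ge e^{-1/2}$ and $\|s-t\|$ is bounded), combines this density set with the same $C_r^n$ family, and runs the mass-distribution principle against covers of $F$ through the inclusion $E\times(U_i\cap F) \subset B_{r_i}\cup\bigcup_n C_{r_i}^n$.

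Your ``main obstacle'' for $\gamma^* \le \dimh(E\times F;\rho) - d$ is a misdiagnosis. One never bounds $\mu(C_r^n)$ by $(\text{total energy})/\inf_{C_r^n} K_\gamma$. Rather, for each \emph{fixed} $n$: if $\limsup_{r\to 0}\mu(C_r^n(s,x))/\big((2^{-n})^d r^{d+\gamma}\big) > 0$, then summing the lower bound $\ge a\,e^{-2^{2n+1}}$ over infinitely many disjoint $r$-shells forces $\int K_\gamma((s,x),\cdot)\,d\mu = \infty$; the double exponential is merely a fixed positive constant in this step. On the resulting good set one obtains $\mu(C_r^n) \le \lambda (2^{-n})^d r^{d+\gamma}$ with no exponential contamination, and the covering sum $\sum_{n\ge 0} (2^{-n})^d$ converges. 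The extra dyadic splitting and the ``quantitative $P_0$'' idea are therefore unnecessary---and the latter could not help anyway, since $P_0$ gives no rate for $\mu(\{(t,y):\|s-t\| \le \varepsilon\})$.
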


\begin{remark}\label{R:gamma*}
As a result of Propositions \ref{prop2.2} and \ref{thm2.3}, if $\dim_{_{\rm H}} E = \dim_{_{\rm P}} E$ or $\dim_{_{\rm H}} F = \dim_{_{\rm P}} F$  
(e.g., this is the case if $E$ or $F$ is an arbitrary self-similar set, cf. Falconer \cite[p. 551]{Fal89}), then
\begin{equation*}
%\label{eq1.3}
\sup\{ \gamma > 0 : C_\gamma(E\times F) > 0\} = \max\{ 0, \dim_{_{\rm H}} F + \min\{0, 2\dim_{_{\rm H}} E - d\} \}.
\end{equation*}
\end{remark}

\begin{proof}[Proof of Proposition \ref{thm2.3}]
We first assume that $\dimh (E \times F; \rho) > d$ and derive the lower bound in \eqref{eq2.1}.
Consider two cases.

Case 1: $2 \dimh  E - d > 0$.
In this case, we have to show that $\dimh  F \le \gamma^*$. If $\dimh  F = 0$, there is nothing to prove, so we assume that $\dimh  F > 0$. 
Let $0 < \gamma < \dimh  F$. By Frostman's lemma, we can find Borel probability measures $\mu_1$ on $E$ and $\mu_2$ on 
$F$ such that
\[ \int_E \int_E \frac{\mu_1(ds) \, \mu_1(dt)}{\|s - t\|^{d/2}} < \infty, \quad 
\int_F \int_F \frac{\mu_2(dx) \, \mu_2(dy)}{\|x - y\|^{\gamma}} < \infty. 
\]
Then $\mu := \mu_1 \otimes \mu_2$ is a Borel probability measure on $E \times F$ such that $\mu(\{s\} \times F) = 0$ for every 
$s \in E$ and
\begin{align*}
\iint \frac{e^{-\|x - y\|/(2\|s - t\|)}}{\|s - t\|^{d/2} \|x - y\|^\gamma} \, \mu(ds\, dx) \, \mu(dt\, dy) \le \int_E \int_E \frac{\mu_1(ds) \, 
\mu_1(dt)}{\|s - t\|^{d/2}} \int_F \int_F \frac{\mu_2(dx) \, \mu_2(dy)}{\|x - y\|^{\gamma}} < \infty.
\end{align*}
This implies that $\gamma \le \gamma^*$ for all $0 < \gamma < \dimh  F$. Hence $\dimh  F \le \gamma^*$.

Case 2: $2 \dimh E - d \le 0$.
In this case, we have to show that $\alpha := \dimh  F + 2 \dimh  E - d \le \gamma^*$. There is nothing to prove if $\alpha \le 0$, 
so we assume that $\alpha > 0$. 
Let $\varepsilon \in (0, \alpha)$ be such that $\dimh F + 2\dimh  E - \varepsilon - d > 0$ and take $0 < \gamma < \dimh F + 2\dimh  E 
- \varepsilon - d$.
By Frostman's lemma, we can find Borel probability measures $\mu_1$ on $E$ and $\mu_2$ on $F$ such that
\[ \int_E \int_E \frac{\mu_1(ds) \, \mu_1(dt)}{\|s - t\|^{\dimh E - \varepsilon/2}} < \infty, \quad 
\int_F \int_F \frac{\mu_2(dx) \, \mu_2(dy)}{\|x - y\|^{\gamma + d - 2 \dimh E + \varepsilon}} < \infty. \]
Set $\mu = \mu_1 \otimes \mu_2$.
Note that there is a constant $0 < c < \infty$ such that 
\[ \exp\left({-\frac{\|x - y\|^2}{2\|s - t\|}}\right) \le c \left(\frac{\|s - t\|}{\|x - y\|^2}\right)^{\frac{d}{2} - \dimh E + \frac{\varepsilon}{2}} 
\]
for all $s, t \in E$ and $x, y \in F$. It follows that
\begin{align*}
& \int_{E \times F} \int_{E\times F} \frac{e^{-\|x - y\|^2/(2\|s - t\|)}}{\|s - t\|^{d/2} \|x - y\|^\gamma} \, \mu(ds \, dx) \, \mu(dt\, dy)\\
\le & \, c \int_E \int_E \frac{\mu_1(ds) \, \mu_1(dt)}{\|s - t\|^{\dimh  E - \varepsilon/2}} \times \int_F \int_F \frac{\mu_2(dx) \,
 \mu_2(dy)}{\|x - y\|^{\gamma + d - 2\dimh E + \varepsilon}} < \infty.
\end{align*}
So, $\gamma \le \gamma^*$ for all $0 < \gamma < \dimh F + 2\dimh E - \varepsilon - d$, and hence $\dimh F + 2\dimh E - 
\varepsilon - d \le \gamma^*$. Letting $\varepsilon \to 0$ yields $\dimh F + 2\dimh E - d \le \gamma^*$.

Next, we derive the upper bound in in \eqref{eq2.1} by showing $\gamma^* \le \dimh  F$ and $\gamma^* \le \dimh (E \times F; \rho) - d$.
If $\gamma^* = 0$, there is nothing to prove, so we assume $\gamma^* > 0$.
Let $0 < \gamma < \gamma^*$, so that $\mathscr{E}_\gamma(\mu) < \infty$ for some $\mu \in P_0(E\times F)$. 
Let 
\begin{align*}
&G_0 = \left\{ (s, x) \in E \times F : \limsup_{r \to 0} \frac{\mu(B_r(s, x))}{r^\gamma} > 0\right\},\\
&G_1 = \left\{ (s, x) \in E \times F : \sup_{n\ge 0}\limsup_{r \to 0} \frac{\mu(C_r^n(s, x))}{(2^{-n})^d\, r^{d + \gamma}} > 0 \right\},
\end{align*}
where
\[ B_r(s, x) = \left\{ (t, y) \in E \times F : \|x - y\| \le \|s - t\|^{1/2} \text{ and } \|x - y\| \le r \right\} \]
and
\[ C_r^n(s, x) = \left\{ (t, y) \in E \times F : \frac{1}{2^{n+1}}<\frac{\|s - t\|^{1/2}}{\|x-y\|} \le \frac{1}{2^n} \text{ and } \|x - y\| \le r \right\}. \]
Let us show that $\mu((E\times F) \setminus(G_0 \cup G_1)) = 1$.
In fact, if $(s, x) \in G_0$, we can find $a > 0$ and a sequence of positive $r_i \downarrow 0$ such that $\mu(B_{r_i}(s, x)) \ge a r_i^\gamma$. 
Note that $\mu(E \times \{x\}) = 0$ since $\mathscr{E}_\gamma(\mu) < \infty$. So we can find $0 < q_i < r_i$ such that 
$\mu(B_{q_i, r_i}(s, x)) \ge a r_i^\gamma /2$, where
\[ B_{q, r}(s, x) =  \left\{ (t, y) \in E \times F : \|x - y\| \le \|s - t\|^{1/2} \text{ and } q < \|x - y\| \le r \right\}. \]
By taking a subsequence if needed, we may assume that $r_{i+1} < q_i$, so that the sets $B_{q_i, r_i}(s, x)$ are disjoint. Moreover, since 
$E$ is compact, there is $0 < M < \infty$ such that $\|t\| \le M$ for all $t \in E$. Then 
\begin{align*}
\int_{E\times F} \frac{e^{-\|x - y\|^2/(2\|s - t\|)}}{\|s - t\|^{d/2} \|x - y\|^\gamma} \, \mu(dt\, dy)
& \ge \sum_{i=1}^\infty \int_{B_{q_i, r_i}(s, x)} \frac{e^{-\|x - y\|^2/(2\|s - t\|)}}{\|s - t\|^{d/2} \|x - y\|^\gamma} \, \mu(dt\, dy)\\
& \ge \sum_{i=1}^\infty \frac{e^{-1/2}}{(2M)^{d/2}\, r_i^{\gamma}}\, a r_i^\gamma /2 = \infty.
\end{align*}
This shows that $\mu(G_0) = 0$ because $\mathscr{E}_\gamma(\mu) < \infty$ implies that the above integral is finite for $\mu$-a.e.~$(s, x)$. 
Also, we can show that $\mu(G_1) = 0$ as in the proof of Proposition \ref{prop2.1}.
Hence, we have $\mu(G) = 1$, where $G = (E \times F)\setminus (G_0 \cup G_1)$. 

Fix an arbitrary $\lambda > 0$.
For each $0 < \delta <1$, let 
\[ G_\delta = \left\{ (s, x) \in G : \mu(B_r(s, x)) \le \lambda \, r^\gamma, \mu(C_r^n(s, x)) \le \lambda (2^{-n})^d\, r^{d + \gamma} 
\text{ for all } n \ge 0, r \in 
(0, \delta] \right\}. 
\]
Let $\{U_i\}_{i=1}^\infty$ be an open cover of $F$ with $r_i := \op{diam} U_i \le \delta$. 
Then $\{E \times (U_i \cap F)\}_{i=1}^\infty$ is a cover of 
$E \times F$, and thus of $G_\delta$. If $E \times (U_i \cap F)$ contains a point $(s_i, x_i)$ of $G_\delta$, then 
\begin{align*}
 E \times (U_i \cap F) & \subset \left\{ (t, y) \in E \times F : \|x_i - y\| \le r_i \right\}
= B_{r_i}(s_i, x_i) \cup \bigcup_{n=0}^\infty C_{r_i}^n (s_i, x_i).
\end{align*}
Since $\mu(\{s_i\} \times F) = 0$ and $(s_i, x_i) \in G_\delta$, we have
\begin{align*}
\mu(E \times (U_i \cap F)) & \le \mu(B_{r_i}(s_i, x_i)) + \sum_{n=0}^\infty \mu(C_{r_i}^n(s_i, x_i))\\
& \le \lambda \, r_i^\gamma + \sum_{n=0}^\infty \lambda (2^{-n})^d \, r_i^{d + \gamma} \le \lambda \, c \, r_i^\gamma,
\end{align*}
for some constant $0 < c < 1$.
It follows that
\[ \mu(G_\delta) \le \sum_{i : (E \times U_i) \cap G_\delta \ne \varnothing} \mu(E \times (U_i \cap F)) \le \lambda\, c \sum_{i=1}^\infty (\op{diam}U_i)^\gamma. \]
Taking infimum over all open covers $\{U_i\}_{i=1}^\infty$ of $F$ with $\mathrm{diam} U_i \le \delta$, and letting $\delta \downarrow 0$, we have $1=  \mu(G) \le \lambda \, c\,  \mathcal{H}^\gamma(F)$. Since $\lambda > 0$ is arbitrary, we conclude that $\mathcal{H}^\gamma(F) = \infty$ and $\gamma \le \dimh F$.  Hence $\gamma^* \le \dimh F$.

To show $\gamma^* \le \dimh (E \times F; \rho) - d$, we may assume $\gamma^* > 0$. Let $0 < \gamma < \gamma^*$ so that 
$\mathscr{E}_\gamma(\mu) < \infty$ for some Borel probability measure $\mu \in P_0(E\times F)$.
Let $G_1$ be as in the last paragraph and let
\[ G_2 = \left\{ (s, x) \in E \times F : \varlimsup_{r \to 0} \frac{\mu(D_r(s, x))}{r^{d + \gamma}} > 0 \right\}, \]
where
\[ D_r(s, x) = \left\{ (t, y) \in E \times F : \|x - y\| \le \|s - t\|^{1/2} \le r \right\}. \]
Then we can argue as in the second part of Proposition \ref{prop2.1} to show that $\mathcal{H}^{d + \gamma}(E \times F; \rho) = \infty$ and thus $d + \gamma \le \dimh (E \times F; \rho)$. Hence $\gamma^* \le \dimh (E \times F; \rho) - d$.

Finally, let us assume $\dimh (E \times F; \rho) \le d$ and prove that $\gamma^* = 0$. 
Suppose, towards a contradiction, that $\gamma^* > 0$. Then we can take $0 < \gamma < \gamma^*$ so that $\mathscr{E}_\gamma(\mu)<\infty$ for some Borel probability measure $\mu \in P_0(E\times F)$. Proceeding as in the last paragraph, we can show 
that $d + \gamma \le \dimh (E \times F; \rho)$, that is, $\gamma \le \dimh (E \times F; \rho) - d \le 0$, which is a contradiction. Hence $\gamma^* = 0$.
\end{proof}

\section{Proof of Theorem \ref{T:hit}}

The goal of this section is to prove Theorem \ref{T:hit}. 
In order to prove this result, we recall the pinned Brownian sheet and its properties. 
Readers may refer to \cite{KX2007} for details. We will also prove a technical lemma (Lemma \ref{lemma4.2}) 
which is an extension of Proposition 4.2 in \cite{KX2015}.

For any fixed $s \in \mathbb{R}^N$, the associated pinned Brownian sheet is defined by
\begin{align}\label{pinned:BS}
W_s(t) = W(t) - C_{s, t} W(s), \quad t \in \mathbb{R}^N_+,
\end{align}
where
\[ C_{s, t} = \prod_{i=1}^N\left(\frac{s_i \wedge t_i}{s_i}\right). \]
Note that $0 \le C_{s, t} \le 1$, and $W_s(t)$ and $W(s)$ are independent because they are Gaussian and $\E[W_s(t)W(s)] = 0$. 
It is also helpful to notice that $C_{s, t} W(s)$ is $\E(W(t)|W(s))$. Hence $W_s(t) = W(t) - \E(W(t)|W(s))$ which is independent 
of $W(s)$. 

For each $\pi \subset \{1, \dots, N\}$, define the partial ordering $\preccurlyeq_\pi$ on $\mathbb{R}^N$ by
\[ s \preccurlyeq_\pi t \Longleftrightarrow \begin{cases}
s_i \le t_i & \text{ if } i \in \pi\\
s_i \ge t_i & \text{ if } i \notin \pi.
\end{cases} \]
Define $\mathscr{F}_\pi(s) = \sigma\{W(u) : u \preccurlyeq_\pi s\}$. Then $\{W_s(t) : s \preccurlyeq_\pi t \}$ is independent 
of $\mathscr{F}_\pi(s)$ (see \cite[Lemma 5.1]{KX2007}).
The filtration $\{ \mathscr{F}_\pi(s) : s \in \mathbb{R}^N_+ \}$ is commuting in the sense of Cairoli, i.e., for any $s, t \in \mathbb{R}^N_+$,
 $\mathscr{F}_\pi(s)$ and $\mathscr{F}_\pi(t)$ are conditionally independent given $\mathscr{F}_\pi(s \wedge_\pi t)$, where
\[ (s \wedge_\pi t)_i := \begin{cases}
s_i \wedge t_i & \text{ if } i \in \pi\\
s_i \vee t_i & \text{ if } i \notin \pi.
\end{cases} \]
Equivalently, $\{ \mathscr{F}_\pi(s) : s \in \mathbb{R}^N_+ \}$ is commuting if and only if for any $s, t \in \mathbb{R}^N_+$ and any 
bounded $\mathscr{F}_\pi(t)$-measurable function $Y$, $\E(Y|\mathscr{F}_\pi(s)) = \E(Y|\mathscr{F}_\pi(s \wedge_\pi t))$. 
The commuting property allows us to apply Cairoli's maximal inequality for martingales \cite[Corollary 3.2]{KX2007}: For any $Z \in L^2(\P)$,
\begin{align}\label{E:Cairoli}
\E\left(\sup_{t \in \mathbb{Q}^N_+} [\E(Z|\mathscr{F}_\pi(t))]^2\right) \le 4^N \E(Z^2). 
\end{align}
We refer to \cite{K} for more details about commuting filtrations and Cairoli's maximal inequalities for multiparameter processes.

For a finite Borel measure $\nu$ on $\mathbb{R}^d$, its Fourier transform is defined by
\[ \widehat{\nu}(\xi) = \int_{\mathbb{R}^d} e^{i\langle \xi, x\rangle} \, \nu(dx) , \quad \xi \in \mathbb{R}^d.\]
Note that we use the convention $e^{i\langle \xi, x \rangle}$ for the Fourier transform.
This convention also applies to the Fourier transform of other functions or tempered distributions in the rest of the paper.
Recall that a function $f: \R^d \to [0,+\infty]$ is said to be tempered if it is measurable and
\[
	\int_{\R^d} \frac{f(x)}{(1+\|x\|)^n}dx < \infty \quad \text{for some $n \ge 0$}.
\]
A function $f: \R^d \to [0,+\infty]$ is said to be positive definite if it is tempered and for all rapidly decreasing test functions $\phi: \R^d \to \R$,
\[
	\int_{\R^d} \int_{\R^d} f(x-y) \phi(x) \phi(y) dy dx \ge 0.
\]
Let us recall the following lemma.

\begin{lemma}\cite[Lemma 4.1]{KX2015}
\label{lemma4.1}
If $f: \mathbb{R}^d \to [0, +\infty]$ is lower semicontinuous and positive definite, 
then for all finite Borel measures $\rho$ on $\mathbb{R}^d$,
\begin{align*}
\int_{\R^d} \int_{\R^d} f(x - y) \, \rho(dx) \, \rho(dy) = \frac{1}{(2\pi)^d} \int_{\mathbb{R}^d} \widehat{f}(\xi)\, |\widehat{\rho}(\xi)|^2 \, d\xi.
\end{align*}
If $f$ is in addition bounded, then for all finite Borel measures $\rho$ and $\nu$ on $\mathbb{R}^d$,
\begin{align*}
\int_{\R^d} \int_{\R^d} f(x - y) \, \rho(dx) \, \nu(dy) = \frac{1}{(2\pi)^d} \int_{\mathbb{R}^d} \widehat{f}(\xi) \, \widehat{\rho}(\xi) \, \overline{\widehat{\nu}(\xi)} 
\, d\xi.
\end{align*}
\end{lemma}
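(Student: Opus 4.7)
The plan is to prove the first identity via Gaussian mollification combined with the Bochner--Schwartz theorem, and then derive the second identity by polarization.

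By the Bochner--Schwartz theorem, since $f$ is tempered and positive definite, its Fourier transform $\widehat{f}$ is a non-negative tempered Borel measure on $\R^d$, so the right-hand side of the lemma is well defined in $[0,+\infty]$. Let $g_\varepsilon(x) = (2\pi\varepsilon)^{-d/2} e^{-\|x\|^2/(2\varepsilon)}$ be the Gaussian kernel, with $\widehat{g_\varepsilon}(\xi) = e^{-\varepsilon \|\xi\|^2 / 2}$, and set $f_\varepsilon := f * g_\varepsilon$. Then $f_\varepsilon$ is smooth and non-negative, and $\widehat{f_\varepsilon} = e^{-\varepsilon \|\cdot\|^2/2}\,\widehat{f}$ is a finite non-negative measure (Schwartz factor times a tempered measure). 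Fourier inversion gives
\[
f_\varepsilon(x-y) \;=\; \frac{1}{(2\pi)^d}\int_{\R^d} e^{-i\langle\xi,\,x-y\rangle}\,e^{-\varepsilon\|\xi\|^2/2}\,\widehat{f}(d\xi),
\]
and substituting this into $\iint f_\varepsilon(x-y)\,\rho(dx)\,\rho(dy)$ and invoking Fubini (justified by the Gaussian $\xi$-decay together with the finiteness of $\rho$) yields the smoothed identity
\begin{equation*}
\iint f_\varepsilon(x-y)\,\rho(dx)\,\rho(dy) \;=\; \frac{1}{(2\pi)^d}\int_{\R^d} e^{-\varepsilon\|\xi\|^2/2}\,|\widehat{\rho}(\xi)|^2\,\widehat{f}(d\xi). \tag{$\star$}
\end{equation*}

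Letting $\varepsilon\downarrow 0$ in $(\star)$, the right-hand side is monotone increasing and converges by monotone convergence to $(2\pi)^{-d}\int|\widehat{\rho}(\xi)|^2\,\widehat{f}(d\xi)$, which is the target right-hand side. On the left-hand side, lower semicontinuity of $f$ yields $\liminf_{\varepsilon\downarrow 0}f_\varepsilon(x)\ge f(x)$ for every $x\in\R^d$ (a standard property of mollifications of l.s.c. non-negative functions, verified by writing $f_\varepsilon(x)=\E f(x-Z_\varepsilon)$ with $Z_\varepsilon\sim\mathcal N(0,\varepsilon I)$ and using the l.s.c. definition at $x$), so Fatou's lemma gives $\iint f(x-y)\,\rho(dx)\,\rho(dy)\le(2\pi)^{-d}\int|\widehat{\rho}|^2\,d\widehat{f}$. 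The main obstacle is the reverse inequality, since the family $\{f_\varepsilon\}$ is not monotone in $\varepsilon$ in general. My plan to close the gap is to rewrite the left-hand side of $(\star)$ via Fubini as $\int h(z)\,g_\varepsilon(z)\,dz$, where $\sigma:=\rho*\tilde{\rho}$ with $\tilde{\rho}(A)=\rho(-A)$ and $h(z):=\int f(w-z)\,\sigma(dw)$; the function $h$ is lower semicontinuous with $h(0)=\iint f(x-y)\,\rho(dx)\,\rho(dy)$, while $(\star)$ already identifies $\int h\,g_\varepsilon\,dz$ with a quantity that is monotone increasing as $\varepsilon\downarrow 0$. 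Combining the l.s.c. behavior of $h$ at $0$ with a truncation of $\widehat{f}$ on $\{\|\xi\|\le n\}$ to localize the spectral information should then pin down the common limit as $h(0)$, delivering the reverse inequality.

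The second identity follows from the first by polarization. Under the additional boundedness hypothesis on $f$, the integrals $\iint f(x-y)\,\mu(dx)\,\nu(dy)$ are absolutely convergent for all finite signed/complex Borel measures, so one applies the first identity to the four measures $\rho+\nu,\ \rho-\nu,\ \rho+i\nu,\ \rho-i\nu$ and takes the appropriate complex-linear combination to isolate the mixed term $\iint f(x-y)\,\rho(dx)\,\nu(dy)=(2\pi)^{-d}\int\widehat{f}(\xi)\,\widehat{\rho}(\xi)\,\overline{\widehat{\nu}(\xi)}\,d\xi$ on both sides (using that $\widehat f$ is even, since $f$ is real-valued and even, to see that the resulting integral is actually real). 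The boundedness of $f$ is precisely what legitimizes this polarization step.
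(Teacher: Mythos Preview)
The paper does not prove this lemma; it is quoted from \cite[Lemma~4.1]{KX2015}, so there is no in-paper argument to compare against and your attempt must be assessed on its own.

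Your mollified identity $(\star)$ and the Fatou step yielding $\iint f(x-y)\,\rho(dx)\,\rho(dy)\le (2\pi)^{-d}\int|\widehat\rho|^2\,\widehat f(d\xi)$ are correct, but the reverse inequality is a genuine gap and your sketched plan does not close it. Lower semicontinuity of $h$ at $0$ again gives only $\liminf_{\varepsilon\downarrow 0}\int h\,g_\varepsilon\ge h(0)$, which is the direction you already have from Fatou. Spectral truncation $\widehat{f_n}={\bf 1}_{\{\|\xi\|\le n\}}\widehat f$ produces a bounded continuous $f_n$ for which the identity holds by Fubini, but passing to the limit on the left would require $f_n\le f$, and this fails: $f-f_n$ has non-negative Fourier transform, hence is positive definite, but positive definiteness does not imply pointwise non-negativity. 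In fact the reverse inequality can genuinely fail under the stated hypotheses: $f={\bf 1}_{\R^d\setminus\{0\}}$ is l.s.c., tempered, and distributionally positive definite (it equals $1$ a.e.), yet with $\rho=\delta_0$ the left side is $f(0)=0$ while the right side is $1$. So some extra regularity---continuity of $f$, or $f\in L^1(\R^d)$ so that $\widehat f$ is a bona fide function, both of which hold in every application of the lemma in this paper---is implicitly required, and your argument should make that hypothesis explicit and exploit it.

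Your polarization for the second identity also needs repair: the first identity has been established only for \emph{non-negative} finite measures, so you cannot apply it to $\rho-\nu$ or $\rho\pm i\nu$. A clean fix is to use only $\rho$, $\nu$, and $\rho+\nu$: expand both sides of the first identity for $\rho+\nu$, subtract the identities for $\rho$ and for $\nu$, use that $f$ is even (so the two cross terms on the left coincide) and that $\widehat f$ is even (so $\int\widehat f\,\widehat\rho\,\overline{\widehat\nu}$ is real), and divide by $2$. Boundedness of $f$ ensures all six quantities are finite, so the subtraction is legitimate.
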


\begin{lemma}\label{lem:mu_n}
Let $K$ be a compact set in $\R^m$ and $\mu$ be a Borel probability measure on $K$.
Then there exists a sequence of probability measures $(\mu_n)_{n=1}^\infty$ on $K$, where each $\mu_n$ is a finite convex combination of delta measures $\delta_x$ where $x \in K$, such that $\mu_n$ converges weakly to $\mu$.
\end{lemma}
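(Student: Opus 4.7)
The plan is to approximate $\mu$ by discretization: partition $K$ into finitely many Borel pieces of small diameter, and replace the restriction of $\mu$ to each piece by a point mass with equal total mass located at a chosen representative in that piece. This produces a convex combination of delta measures at points of $K$, and as the mesh of the partition shrinks, the resulting measures converge weakly to $\mu$ by uniform continuity of bounded continuous test functions on the compact set $K$.

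More precisely, for each $n \ge 1$, since $K$ is compact it is covered by finitely many open balls of radius $1/(2n)$; intersecting these with $K$ and taking successive set-theoretic differences yields a finite Borel partition $K = A^{(n)}_1 \sqcup \cdots \sqcup A^{(n)}_{k_n}$ with $\op{diam}(A^{(n)}_j) \le 1/n$. Let $J_n = \{ j : \mu(A^{(n)}_j) > 0 \}$, pick a representative $x^{(n)}_j \in A^{(n)}_j$ for each $j \in J_n$, and define
\[
\mu_n = \sum_{j \in J_n} \mu(A^{(n)}_j) \, \delta_{x^{(n)}_j}.
\]
Since $\sum_{j \in J_n} \mu(A^{(n)}_j) = \mu(K) = 1$, $\mu_n$ is indeed a finite convex combination of delta measures supported in $K$.

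To verify weak convergence, I would fix an arbitrary bounded continuous function $f \colon \R^m \to \R$. Since $f$ is uniformly continuous on $K$, for any $\varepsilon > 0$ there is $n_0$ such that $|f(x) - f(y)| \le \varepsilon$ whenever $x, y \in K$ satisfy $\|x - y\| \le 1/n_0$. Then for $n \ge n_0$,
\[
\left| \int f \, d\mu_n - \int f \, d\mu \right|
= \left| \sum_{j \in J_n} \int_{A^{(n)}_j} \left[ f(x^{(n)}_j) - f(y) \right] \mu(dy) \right|
\le \varepsilon \sum_{j \in J_n} \mu(A^{(n)}_j) = \varepsilon.
\]
Letting $\varepsilon \downarrow 0$ gives $\int f \, d\mu_n \to \int f \, d\mu$, which is exactly weak convergence.

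This argument is entirely routine and presents no real obstacle; the only points to be mindful of are that the partition can be chosen with Borel pieces (automatic from the construction via open balls and set differences) and that each chosen representative lies in $K$ (automatic since $A^{(n)}_j \subset K$ is nonempty for $j \in J_n$).
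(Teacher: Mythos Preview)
Your proof is correct and complete; the discretization argument is entirely standard and all the steps check out.

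The paper takes a genuinely different route: it invokes the Riesz representation theorem to identify $M(K)$ with $C_0(K)^*$, applies Banach--Alaoglu to get weak-$*$ compactness of the unit ball, observes that $P(K)$ is a weak-$*$ closed (hence compact) convex subset, and then appeals to the Krein--Milman theorem together with the identification of the extreme points of $P(K)$ as the Dirac masses $\{\delta_x : x \in K\}$; metrizability of the weak-$*$ topology (since $C_0(K)$ is separable) then yields an approximating sequence. Your argument, by contrast, is direct and fully constructive: you exhibit the approximating measures explicitly via a mesh-refinement partition and verify weak convergence by hand using uniform continuity on the compact set $K$. Your approach is more elementary and self-contained, requiring no functional-analytic machinery beyond compactness; the paper's approach is more conceptual and situates the lemma as an instance of Krein--Milman, which would generalize to other compact convex sets of measures, but is arguably overkill for this particular statement.
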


\begin{proof}
By the Riesz representation theorem \cite[III. 5.7]{Conway}, the space $M(K)$ of regular Borel signed measures on $K$ with the total 
variation norm can be identified as the dual space $C_0(K)^*$ of $C_0(K)$, where $C_0(K)$ is the Banach space of all bounded continuous 
functions $f: K \to \R$ such that for every $\varepsilon>0$, $\{x \in K : |f(x)| \ge \varepsilon\}$ is compact,
%vanishing at {\textcolor{blue} {infinity}}\footnote{\textcolor{blue}{Would it be better to say ``outside a neighborhood of $K$?}} 
equipped with the $\sup$ norm.
Also, since $K$ is a metric space, every Borel probability measure on $K$ is regular (see \cite[Theorem 1.1]{Billingsley}). 
Hence, the set $P(K)$ of all Borel probability measures on $K$ satisfies $P(K) \subset M(K) \cong C_0(K)^*$.
By the Banach--Alaoglu theorem \cite[V. 3.1]{Conway}, the closed unit ball $B$ in $M(K) \cong C_0(K)^*$ is compact in the weak star topology of $C_0(K)^*$.
Note that $P(K)$ is closed in the weak star topology (this is because the compactness of $K$ implies ${\bf 1}_K \in C_0(K)$, so if $\mu_n \in P(K)$ converges to $\mu$ in the weak star topology, then $\mu(K) = \int {\bf 1}_K d\mu = \lim_{n\to\infty} \int {\bf 1}_K d\mu_n = \lim_{n\to\infty} \mu_n(K) = 1$).
Now $P(K)$, as a weak star closed subset of the weak star compact set $B$, is also weak star compact.
It follows that $P(K)$ is a convex, weak star compact set in $M(K) \cong C_0(K)^*$. By the Krein--Milman theorem \cite[V. 7.4]{Conway}, $P(K)$ is equal to the weak star closure of the convex hull of $E$, where $E$ is the set of extreme points of $P(K)$.
Owing to the compactness of $K$, the weak star topology in $M(K) \cong C_0(K)^*$ is metrizable \cite[V. 5.1]{Conway}.
Therefore, there exists a sequence $\mu_n$ in $\mathrm{conv}(E)$ that converges to $\mu$ in the weak star topology of $C_0(K)^*$, which means precisely that $\mu_n$ converges weakly to $\mu$ in the context of probability measures.
Finally, since $K$ is compact, the set of extreme points $E$ of $P(K)$ coincides with the set of delta measures $\{\delta_x: x \in K\}$ (see \cite[V. 8.4]{Conway}), so each $\mu_n$ is a finite convex combination of delta measures.
This completes the proof of the lemma.
\end{proof}

\begin{lemma}\label{lemma4.2}
Let $0 < c_1 < 1 < c_2$ be constants. For distinct $s, t \in \mathbb{R}^N$ and for any $w \in \R^d$, define
\begin{align*}
p_{s, t}(w) = \frac{e^{-\|w\|^2/(2\|t - s\|)}}{\|t - s\|^{d/2}}, \quad 
q_{s, t}(w) = \frac{e^{-\|w\|^2/(2c_1\|t - s\|)}}{\|t - s\|^{d/2}}, \quad
& r_{s, t}(w) = \frac{e^{-\|w\|^2/(2c_2\|t - s\|)}}{ \|t - s\|^{d/2}}.
\end{align*}
Let $E \subset \mathbb{R}^N$, $F \subset \mathbb{R}^d$ be compact sets and $\mu \in P_0(E\times F)$.
Let $\phi_1$, $\phi_2 \in C_0(\mathbb{R}^d) \cap L^1(\mathbb{R}^d)$ be positive definite functions with $0 \leq \widehat{\phi}_1
 \leq \widehat{\phi}_2$. Then
\begin{align}
\begin{aligned}
\label{4.2}
\iint (\phi_1 \ast p_{s, t})(x - y) \, \mu(ds\, dx) \, \mu(dt\, dy) 
& \leq \iint (\phi_2 \ast p_{s, t})(x - y)\, \mu(ds\, dx) \, \mu(dt\, dy)
\end{aligned}
\end{align}
and
\begin{align}
\begin{aligned}\label{4.3}
&c_2^{-(1+\frac{d}{2})} \iint (\phi_1 \ast r_{s, t})(x - y)\, \mu(ds\, dx)\mu(dt\, dy)  \\
&\hspace{30pt} \leq \iint (\phi_1 \ast p_{s, t})(x - y) \, \mu(ds\, dx) \, \mu(dt\, dy) \\
& \hspace{60pt} \leq c_1^{-(1+\frac{d}{2})} \iint (\phi_1 \ast q_{s, t})(x - y) \, \mu(ds\, dx) \, \mu(dt\, dy).
\end{aligned}
\end{align}
\end{lemma}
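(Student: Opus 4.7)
The plan is a Fourier-analytic reduction combined with a subordination calculation. A direct computation gives
\[
\widehat{p}_{s,t}(\xi) = (2\pi)^{d/2}\,e^{-\|\xi\|^2\|t-s\|/2},\qquad \widehat{q}_{s,t}(\xi) = (2\pi c_1)^{d/2}\,e^{-c_1\|\xi\|^2\|t-s\|/2},
\]
and $\widehat{r}_{s,t}$ is obtained from $\widehat{q}_{s,t}$ by replacing $c_1$ by $c_2$. Applying pointwise Fourier inversion to the bounded continuous function $\phi_1\ast p_{s,t}$ (whose Fourier transform lies in $L^1$ thanks to the Gaussian decay of $\widehat{p}_{s,t}$) and then swapping the order of integration, I would establish
\[
\iint (\phi_1 \ast p_{s,t})(x-y)\,\mu(ds\,dx)\mu(dt\,dy) = (2\pi)^{-d/2}\int_{\R^d}\widehat{\phi}_1(\xi)\,J_1(\xi)\,d\xi,
\]
where $J_a(\xi) := \iint e^{-i\langle\xi,x-y\rangle}\,e^{-a\|\xi\|^2\|t-s\|/2}\,\mu(ds\,dx)\mu(dt\,dy)$, with analogous identities for $q$ and $r$ that replace $J_1$ by $J_{c_1}$ (resp.\ $J_{c_2}$) and insert an extra factor $c_1^{d/2}$ (resp.\ $c_2^{d/2}$). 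Each $J_a(\xi)$ is nonnegative: its integrand is the product of the rank-one positive definite kernel $e^{-i\langle\xi,x\rangle}\overline{e^{-i\langle\xi,y\rangle}}$ and the kernel $K_a(s,t) := e^{-a\|\xi\|^2\|t-s\|/2}$, which is positive definite in $(s,t)$ by Schoenberg's theorem. Inequality \eqref{4.2} follows at once from $0\le\widehat{\phi}_1\le\widehat{\phi}_2$ and $J_1\ge 0$.

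For \eqref{4.3}, absorbing the $c_i^{d/2}$ prefactors reduces the two desired inequalities to the pointwise Fourier-side estimates $c_1 J_1(\xi)\le J_{c_1}(\xi)$ and $J_{c_2}(\xi)\le c_2 J_1(\xi)$. I would obtain these from the stronger assertions that the kernels $K_{c_1}-c_1 K_1$ and $c_2 K_1 - K_{c_2}$ are themselves positive definite in $(s,t)$. The key tool is the Gaussian subordination identity
\[
e^{-\gamma\|u\|} = \int_0^\infty e^{-\tau\|u\|^2/2}\,h_\gamma(\tau)\,d\tau,\qquad h_\gamma(\tau) := \frac{\gamma}{\sqrt{2\pi\tau^3}}\,e^{-\gamma^2/(2\tau)},
\]
which realizes $e^{-\gamma\|u\|}$ as a nonnegative mixture of the Gaussian (hence positive definite) kernels $e^{-\tau\|u\|^2/2}$. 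Writing $\gamma_0 = \|\xi\|^2/2$, a one-line computation yields
\[
h_{c_1\gamma_0}(\tau) - c_1\,h_{\gamma_0}(\tau) = \frac{c_1\gamma_0}{\sqrt{2\pi\tau^3}}\Bigl[e^{-c_1^2\gamma_0^2/(2\tau)} - e^{-\gamma_0^2/(2\tau)}\Bigr] \ge 0
\]
for every $\tau>0$ (since $c_1^2<1$), so $K_{c_1}-c_1 K_1$ is a nonnegative mixture of positive definite Gaussian kernels, and hence is itself positive definite. The same computation with $c_2^2>1$ reverses the sign of the bracket and shows $c_2 K_1 - K_{c_2}$ is positive definite.

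The main technical obstacle I expect is a clean justification of the Fourier-inversion and Fubini exchanges, since $\widehat{\phi}_1$ is only bounded, not necessarily integrable. I would handle this by first carrying out the argument on the subsets $\{(s,x,t,y):\|t-s\|\ge\delta\}$, on which the iterated integrals are absolutely convergent, and then passing to the limit $\delta\downarrow 0$ by monotone convergence; the complement has $\mu\otimes\mu$-measure tending to zero because $\mu\in P_0(E\times F)$ forces the diagonal $\{s=t\}$ to be a $\mu\otimes\mu$-null set. Once the Fourier reduction is in place, the subordination step is elementary.
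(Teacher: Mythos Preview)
Your approach is sound and takes a genuinely different route from the paper. Both begin with a Fourier transform in the spatial variable, but the paper then performs a \emph{second} Fourier transform in the time variable: it approximates $\mu$ by finitely supported measures (via Krein--Milman), applies Lemma~\ref{lemma4.1} twice, and uses the explicit Cauchy-density formula $\widehat{P}_\xi(\eta)=(2\pi)^N K\cdot\tfrac12\|\xi\|^2\big/\big(\tfrac14\|\xi\|^4+\|\eta\|^2\big)^{(N+1)/2}$ for the Fourier transform of $s\mapsto e^{-\|\xi\|^2\|s\|/2}$; the inequalities in \eqref{4.3} then reduce to a pointwise comparison of these Cauchy-type densities $\widehat P_\xi,\widehat Q_\xi,\widehat R_\xi$, and the passage from $\mu_n$ to $\mu$ uses only that $(s,x,t,y)\mapsto(\phi_k\ast p_{s,t})(x-y)$ is bounded and continuous. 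You instead stay in the $s$-variable, invoking the Schur product to get $J_a\ge 0$ and the $\tfrac12$-stable subordinator density to compare $K_1$ with $K_{c_i}$. Your argument is arguably more elementary (no explicit Cauchy formula needed), while the paper's discrete approximation dispatches all Fubini issues at once and extends verbatim to Lemma~\ref{lemma6.3}, where an extra convolution factor appears.

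One correction: your ``monotone convergence'' does not apply as stated, since positive definiteness of $\phi_1$ does not force $(\phi_1\ast p_{s,t})(x-y)\ge 0$ pointwise. Use dominated convergence instead: on the $(s,x,t,y)$ side the integrand is bounded by $(2\pi)^{d/2}\|\phi_1\|_\infty$, and on the $\xi$ side $|J_1^\delta(\xi)|\le G(\xi):=\iint e^{-\|\xi\|^2\|t-s\|/2}\,\mu(ds\,dx)\mu(dt\,dy)$ with $\int\widehat{\phi}_1 G\,d\xi=(2\pi)^{d/2}\iint(\phi_1\ast p_{s,t})(0)\,d\mu\,d\mu\le (2\pi)^d\|\phi_1\|_\infty<\infty$.
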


\begin{proof}
Thanks to Lemma \ref{lem:mu_n}, we can find a sequence $(\mu_n)_{n=1}^\infty$ of probability measures on $E \times F$ that converges weakly to $\mu$, where each $\mu_n$ is of the form $\sum_{i=1}^m a_i \, \delta_{s_i} \otimes \delta_{x_i}$ where $s_i \in E$, $x_i \in F$, $a_i > 0$, $\sum_{i=1}^m a_i = 1$, and $m$, $a_i$, $s_i$, $x_i$ may depend on $n$.

Let $\nu = \sum_{i=1}^m a_i \, \delta_{s_i} \otimes \delta_{x_i}$. 
Define $p_{s,t} = (2\pi)^{d/2} \, \delta_0$ when $s = t$.
By the assumptions, for fixed $s, t \in E$, the functions $x \mapsto (\phi_1 \ast p_{s, t})(x)$ and $x \mapsto (\phi_2 \ast p_{s, t})(x)$ are positive definite, bounded and continuous. By Fubini's theorem and Lemma \ref{lemma4.1}, for $k=1,2$,
\begin{align*}
& \quad \iint (\phi_k \ast p_{s, t})(x - y) \, \nu(ds\, dx) \, \nu(dt\, dy)\\
& = \sum_{i, j = 1}^m a_i a_j \iint \left[ \iint (\phi_k \ast p_{s, t})(x - y) \, \delta_{x_i}(dx) \, \delta_{x_j}(dy)\right] \delta_{s_i}(ds) \,
\delta_{s_j}(dt)\\
& = \frac{1}{(2\pi)^d} \sum_{i, j = 1}^m a_i a_j \iint \left[ \int_{\mathbb{R}^d} \widehat{\phi}_k(\xi)\, \widehat{p}_{s, t}(\xi) \, 
\widehat{\delta}_{x_i}(\xi) \, \overline{\widehat{\delta}_{x_j}(\xi)} \, d\xi \right] \delta_{s_i}(ds) \,\delta_{s_j}(dt).
\end{align*}
Note that $\widehat{p}_{s, t}(\xi) = (2\pi)^{d/2} \, P_\xi(s - t)$, where $P_\xi(s) = e^{-\frac{1}{2}\|\xi\|^2\|s\|}$. This holds even 
when $s = t$. Then by Fubini's theorem, 
\begin{align*}
& \quad \iint (\phi_k \ast p_{s, t})(x - y) \, \nu(ds\, dx) \, \nu(dt\, dy)\\
& =\frac{1}{(2\pi)^{d/2}} \sum_{i, j = 1}^m a_i a_j \int_{\mathbb{R}^d} \left[ \iint P_\xi(s - t) \, \delta_{s_i}(ds) \,\delta_{s_j}(dt) \right] \widehat{\phi}_k(\xi)\, \widehat{\delta}_{x_i}(\xi) \, \overline{\widehat{\delta}_{x_j}(\xi)} \,  d\xi.
\end{align*}
Observe that $s \mapsto P_\xi(s)$ is the characteristic function of a multivariate Cauchy distribution, which is positive definite.
Indeed, for $\xi \in \mathbb{R}^d \setminus\{0\}$,
\begin{align}\label{3.7}
\widehat{P}_\xi(\eta) = \frac{(2\pi)^N K \cdot \frac{1}{2}\|\xi\|^2}{(\frac{1}{4}\|\xi\|^4 + \|\eta\|^2)^{\frac{N+1}{2}}}, \quad \eta \in 
\mathbb{R}^N,
\end{align}
where $K = \pi^{-\frac{N+1}{2}}\,\Gamma(\frac{N+1}{2})$ (see e.g. \cite[Example 2.5.3]{ST}).
Then by Lemma \ref{lemma4.1} again,
\begin{align*}
& \quad \iint (\phi_k \ast p_{s, t})(x - y) \, \nu(ds\, dx) \, \nu(dt\, dy)\\
& =  \frac{1}{(2\pi)^{N+\frac{d}{2}}} \sum_{i, j = 1}^m a_i a_j \int_{\mathbb{R}^d} \left[ \int_{\mathbb{R}^N}\widehat{P}_\xi(\eta) \, \widehat{\delta}_{s_i}(\eta) \,\overline{\widehat{\delta}_{s_j}(\eta)} \, d\eta \right] \widehat{\phi}_k(\xi)\, \widehat{\delta}_{x_i}(\xi) \, \overline{\widehat{\delta}_{x_j}(\xi)} \,  d\xi\\
& = \frac{1}{(2\pi)^{N+\frac{d}{2}}} \iint \widehat{\phi}_k(\xi)\, \widehat{P}_\xi(\eta) \left| \sum_{i=1}^m a_i \, \widehat{\delta}_{s_i}(\eta) \widehat{\delta}_{x_i}(\xi) \right|^2 d\xi \, d\eta.
\end{align*}
This together with $\widehat{P}_\xi(\eta) \geq 0$ and $0 \leq \widehat{\phi}_1(\xi) \leq \widehat{\phi}_2(\xi)$ implies that
\begin{align}\label{6.6}
\iint (\phi_1 \ast p_{s, t})(x - y) \, \mu_n(ds\, dx) \, \mu_n(dt\, dy) \leq \iint (\phi_2 \ast p_{s, t})(x - y)\, \mu_n(ds\, dx) \, \mu_n(dt\, dy)
\end{align}
for all $n$. 
For $k=1, 2$, since $\phi_k$ is bounded and uniformly continuous, the function
\begin{align*}
(s, x, t, y) \in (E \times F)^2 \mapsto (\phi_k \ast p_{s, t})(x - y)
\end{align*}
is continuous. Hence, by the convergence of $\mu_n$, we can let $n \to \infty$ in \eqref{6.6} to obtain \eqref{4.2}.

To prove \eqref{4.3}, define $q_{s, t} = (2\pi c_1)^{d/2} \, \delta_0$, $r_{s, t} = (2\pi c_2)^{d/2} \, \delta_0$ when $s = t$, and notice that
$\widehat{q}_{s, t}(\xi) = (2\pi)^{d/2} \, Q_\xi(s - t)$ and $\widehat{r}_{s, t}(\xi) = (2\pi)^{d/2} \, R_\xi(s - t)$, where
\begin{align*}
Q_\xi(s) = c_1^{d/2} \, e^{-\frac{1}{2}c_1\|\xi\|^2 \|s\|}, \quad
R_\xi(s) = c_2^{d/2} \, e^{-\frac{1}{2}c_2\|\xi\|^2 \|s\|}
\end{align*}
for $\xi \in \mathbb{R}^d,  s \in \mathbb{R}^N$.
As in \eqref{3.7}, for $\xi \in \mathbb{R}^d \setminus\{0\}$,
\begin{align*}
& \widehat{Q}_\xi(\eta) =  c_1^{1+\frac{d}{2}} \, \frac{(2\pi)^N K \cdot \frac{1}{2}\|\xi\|^2}{(\frac{1}{4}c_1^2\|\xi\|^4 + \|\eta\|^2)^{\frac{N+1}{2}}}, \quad
\widehat{R}_\xi(\eta) = c_2^{1+ \frac{d}{2}} \, \frac{(2\pi)^N K \cdot \frac{1}{2}\|\xi\|^2}{(\frac{1}{4}c_2^2\|\xi\|^4 + \|\eta\|^2)^{\frac{N+1}{2}}}
\end{align*}
for $\eta \in \mathbb{R}^N$.
Setting $\nu = \sum_{i=1}^m a_i \, \delta_{s_i} \otimes \delta_{x_i}$, we can compute as before to get
\begin{align*}
 \iint (\phi_1 \ast q_{s, t})(x - y) \nu(ds\, dx) \nu(dt\, dy) = \frac{1}{(2\pi)^{N+\frac{d}{2}}} \iint  \widehat{\phi}_1(\xi)  \widehat{Q}_\xi(\eta) \left| \sum_{i=1}^m a_i  \widehat{\delta}_{s_i}(\eta) \widehat{\delta}_{x_i}(\xi) \right|^2 d\xi \, d\eta
\end{align*}
and
\begin{align*}
 \iint (\phi_1 \ast r_{s, t})(x - y)  \nu(ds\, dx) \nu(dt\, dy) = \frac{1}{(2\pi)^{N+\frac{d}{2}}} \iint  \widehat{\phi}_1(\xi) \widehat{R}_\xi(\eta) \left| \sum_{i=1}^m a_i  \widehat{\delta}_{s_i}(\eta) \widehat{\delta}_{x_i}(\xi) \right|^2  d\xi \, d\eta.
\end{align*}
Since $0 < c_1 < 1 < c_2$, we have 
\[0 \leq c_2^{-(1+ \frac{d}{2})}\, \widehat{R}_\xi(\eta) \leq \widehat{P}_\xi(\eta) \leq c_1^{-(1+ \frac{d}{2})}\, \widehat{Q}_\xi(\eta).\]
Again we can use the arguments in the first part to deduce \eqref{4.3}.
\end{proof}

\begin{proof}[Proof of Theorem \ref{T:hit}]
Throughout the proof, for distinct $s, t \in E$, we denote
\begin{align}\label{E:pst}
p_{s, t}(w) = \frac{e^{-\|w\|^2/(2\|s - t\|)}}{\|t - s\|^{d/2}}, \quad w \in \mathbb{R}^d.
\end{align}
%Since $E \subset (0, \infty)^N$ and $F \subset \mathbb{R}^d$ are compact, we can find constants $0 < a < 1$ and $1 < b < \infty$ 
%such that $s \in (a, b)^N$ for all $s \in E$, and $\|x\| < b$ for all $x \in F$.

\medskip

%$(\Leftarrow)$. 
\noindent
\textit{Part 1: Lower bound.}
%Suppose $C_0(E\times F)>0$. Then $\mathscr{E}_0(\mu) < \infty$ for some $\mu \in P_0(E\times F)$.
Let $\mu \in P_0(E\times F)$.
Let $\varepsilon \in (0, 1)$, $f_{\varepsilon} = |B(0, \varepsilon)|^{-1} {\bf 1}_{B(0, \varepsilon)}$, where $|\cdots|$ denotes Lebesgue measure, and let $\phi_\varepsilon = f_{\varepsilon} \ast f_{\varepsilon}$.
Define
\begin{align}
 Z_{\varepsilon}(\mu) = \int_{E\times F} \phi_\varepsilon(W(s) - x) \,\mu(ds\,dx).
\end{align}
Let $F^\varepsilon$ denote the $\varepsilon$-neighborhood of $F$, that is,
\begin{align}
F^\varepsilon = \left\{ x \in \R^d : \inf_{y\in F} \|x-y\| < \varepsilon \right\}.
\end{align}
By the Cauchy--Schwarz inequality,
\begin{align}\label{3.16}
\P(W(E) \cap F^\varepsilon \neq \varnothing) \geq 
\P(Z_{\varepsilon}(\mu) > 0) \geq \frac{\E[Z_\varepsilon(\mu)]^2}{\E[Z_\varepsilon(\mu)^2]}.
\end{align}
Note that
\begin{align}\label{E:phi_eps:bd}
2^{-d} f_{\varepsilon/2} \leq \phi_\varepsilon \leq 2^d f_{2\varepsilon}.
\end{align}
By \eqref{E:phi_eps:bd} and the definition of $W_s(t)$ in \eqref{pinned:BS},
\begin{align}
\begin{aligned}
\E[Z_\varepsilon(\mu)^2] & = \iint \E[\phi_\varepsilon(W(s) - x) \phi_\varepsilon(W(t) - y)] \,\mu(ds\,dx)\,\mu(dt\,dy)\\
& \leq 4^d \iint \E[f_{2\varepsilon}(W(s) - x) f_{2\varepsilon}(W_s(t) + C_{s, t}W(s) - y)] \,\mu(ds\,dx) \, \mu(dt\,dy).
\end{aligned}
\end{align}
By \cite[(6.17) and (5.10)]{KX2007}, there exist constants $c_{\ref{5.8}}$, $c_{\ref{5.9}}' > 1$ and $0 < c_{\ref{5.9}} < 1$, depending 
on $a, b$ and $N$ only, such that
\begin{align}
\label{5.8}
1 - C_{s, t} & \leq c_{\ref{5.8}}\, \|t - s\|^{1/2};\\
\label{5.9}
c_{\ref{5.9}}\, \|t - s\| & \leq \sigma_{s, t}^2 \leq c_{\ref{5.9}}' \, \|t - s\|.
\end{align}
Note that $\|W(s) - x\| \leq 2\varepsilon$ and $\|W_s(t) + C_{s, t}W(s) - y\| \leq 2 \varepsilon$ imply $\|W_s(t) + C_{s, t}x - y\| \leq 
\|W_s(t) + C_{s, t} W(s) - y\| + \|C_{s, t}(W(s) - x)\| \leq 4\varepsilon$.
This together with independence of $W(s)$ and $W_s(t)$ and \eqref{E:phi_eps:bd} imply that
\begin{align}
\begin{aligned}\label{7.9}
& \E[f_{2\varepsilon}(W(s) - x) f_{2\varepsilon}(W_s(t) + C_{s, t}W(s) - y)]\\
& \quad \leq c_{\ref{7.9}} \E[f_{2\varepsilon}(W(s) - x)] \E[ \phi_{8\varepsilon}(W_s(t) + C_{s, t}x - y)].
\end{aligned}
\end{align}
Moreover, we have
\begin{align}
\begin{aligned}
\E[f_{2\varepsilon}(W(s) - x)] \le (2\pi a^N)^{-d/2}.
\end{aligned}
\end{align}
It follows that 
\begin{align}\label{7.12}
&\E[Z_\varepsilon(\mu)^2]\\
\notag
& \leq  c_{\ref{7.12}} \iint \left[\int_{\mathbb{R}^d} \phi_{8\varepsilon}(w + C_{s, t}x - y) \frac{1}{\|t - s\|^{d/2}} \exp\left(-\frac{\|w\|^2}{2 \,c_{\ref{5.9}}'\|t - s\|}\right) dw \right]\mu(ds\,dx)\, \mu(dt\,dy)\\
\notag
& = c_{\ref{7.12}} \iint \left[\int_{\mathbb{R}^d} \phi_{8\varepsilon}(w') \frac{1}{\|t - s\|^{d/2}} \exp\left(-\frac{\|w' + y - C_{s, t}x\|^2}
{2 \,c_{\ref{5.9}}'\|t - s\|}\right)  dw' \right] \mu(ds\,dx)\, \mu(dt\,dy)
\end{align}
for some constant $c_{\ref{7.12}} > 0$ independent of $\varepsilon$.
By the triangle inequality and \eqref{5.8}, there is a constant $c_{\ref{7.13}} > 0$ such that for all $x\in F$,
\begin{align}
\begin{aligned}\label{7.13}
& \|w' + y - x\|^2 \\
& \leq \Big(\|w' + y - C_{s, t}x\| + \|(1 - C_{s, t})x\|\Big)^2\\
&  \leq \Big(\|w' + y - C_{s, t}x\| + c_{\ref{7.13}} \|t - s\|^{1/2}\Big)^2\\
&  = \|w' + y - C_{s, t}x\|^2 + c_{\ref{7.13}}^2 \|t - s\| + 2 \,c_{\ref{7.13}}\,\|w' + y - C_{s, t}x\| \,\|t - s\|^{1/2}.
\end{aligned}
\end{align}
Now, consider two cases.
If (i) $\|w' + y - C_{s, t}x\| > \|t - s\|^{1/2}$, then
\begin{align}
\begin{aligned}\label{7.14}
\exp\left(- \frac{\|w' + y - C_{s, t}x\|^2}{2 \,c_{\ref{5.9}}'\|t - s\|} \right) 
& = \exp\left(- \frac{(1 + c_{\ref{7.13}}^2 + 2\,c_{\ref{7.13}})\|w' + y - C_{s, t}x\|^2}{2 (1 + c_{\ref{7.13}}^2 + 2\,c_{\ref{7.13}})\,
c_{\ref{5.9}}'\|t - s\|} \right)  \\
& \leq \exp\left(- \frac{\|w' + y - x\|^2}{2\, c_{\ref{7.14}}\|t - s\|} \right),
\end{aligned}
\end{align}
where $c_{\ref{7.14}} = (1 + c_{\ref{7.13}}^2 + 2\,c_{\ref{7.13}}) \, c_{\ref{5.9}}'$.
Note that we have $c_{\ref{7.14}} > c_{\ref{5.9}}' > 1$.
On the other hand, if (ii) $\|w' + y - C_{s, t}x\| \leq \|t - s\|^{1/2}$, then
\begin{align}
\notag
\exp\left(- \frac{\|w' + y - C_{s, t}x\|^2}{2 \,c_{\ref{5.9}}'\|t - s\|} \right) 
& \leq \exp\left(- \frac{\|w' + y - x\|^2}{2 \,c_{\ref{5.9}}'\|t - s\|}\right) \exp\left( - \frac{(c_{\ref{7.13}}^2 + 2 \,c_{\ref{7.13}})\|t-s\|}
{2\,c_{\ref{5.9}}' \|t-s\|} \right)\\
& \leq \exp\left(- \frac{\|w' + y - x\|^2}{2 \,c_{\ref{7.14}}\|t - s\|} \right).
\label{7.15}
\end{align}
It follows from \eqref{7.12} and the above that
\begin{align}
\begin{aligned}
\label{5.17}
\E[Z_\varepsilon(\mu)^2] & \leq c_{\ref{7.12}} \iint \left[\int_{\mathbb{R}^d}\phi_{8\varepsilon}(w') \, r_{s, t}(w' + y - x) \, dw' \right]
\mu(ds\,dx)\, \mu(dt\,dy)\\
& = c_{\ref{7.12}} \iint (\phi_{8\varepsilon} \ast r_{s, t})(x - y) \,\mu(ds\,dx)\, \mu(dt\,dy),
\end{aligned}
\end{align}
where
\begin{align}\label{7.16}
r_{s, t}(w) = \frac{1}{\|t - s\|^{d/2}} \exp\left(-\frac{\|w\|^2}{2c_{\ref{7.14}}\|t - s\|}\right), \quad w \in \mathbb{R}^d.
\end{align}
Now, we may apply Lemma \ref{lemma4.2} followed by Lemma \ref{lemma4.1} and the fact that $\widehat{\phi}_{8\varepsilon} = |\widehat{f}_{8\varepsilon}|^2 \le 1$ in order to obtain
\begin{align} %\redlabel
\begin{aligned}\label{7.18}
\E[Z_\varepsilon(\mu)^2] & \leq c_{\ref{7.18}} \iint (\phi_{8\varepsilon} \ast p_{s, t})(x - y) \, \mu(ds\,dx)\, \mu(dt\,dy)\\
%\\& = c_{\ref{7.18}}\, \mathcal{E}(\mu),
& \le c_{\ref{7.18}} \iint p_{s, t}(x - y) \, \mu(ds\,dx)\, \mu(dt\,dy)
\end{aligned}
\end{align}
for some constant $c_{\ref{7.18}} > 0$ independent of $\varepsilon$.
On the other hand,
\begin{align}\label{7.19}
\notag
\E(Z_\varepsilon(\mu))
& =\int \left[\int_{\|w\| \leq b + 2} \phi_{\varepsilon}(w - x) \frac{1}{(2\pi \prod_{i=1}^N s_i)^{d/2}} \exp\left(- \frac{\|w\|^2}{2 \prod_{i=1}^N s_i} \right) \, dw \right]
\mu(ds\, dx)\\
\notag
& \geq \frac{1}{(2\pi b^N)^{d/2}} \exp\left( - \frac{(b + 2)^2}{2 a^N}\right) \int \left[\int_{\|w\| \leq b+ 2} \phi_\varepsilon(w - x)\, dw \right] 
\mu(ds\, dx)\\
& = c_{\ref{7.19}} > 0,
\end{align}
where $c_{\ref{7.19}}$ is a constant independent of $\varepsilon$.
Hence from \eqref{3.16}, we have
\begin{align}
\begin{aligned} %\redlabel
\P(W(E) \cap F^\varepsilon \neq \varnothing) &\geq c_{\ref{7.19}}^{\,2}\, c_{\ref{7.18}}^{-1} \left[\iint p_{s, t}(x - y) \, \mu(ds\,dx)\, \mu(dt\,dy)\right]^{-1}\\
& = c_{\ref{7.19}}^{\,2}\, c_{\ref{7.18}}^{-1}\, \mathscr{E}_0(\mu)^{-1}.
\end{aligned}
\end{align}
%Letting $\varepsilon \downarrow 0$ yields $\P(W(E) \cap F \neq \varnothing) \geq c_{\ref{7.19}}^{\,2}\, c_{\ref{7.18}}^{-1}\, 
%\mathscr{E}_0(\mu)^{-1} > 0$.
%Letting $\varepsilon \downarrow 0$ yields $\P(W(E) \cap F \neq \varnothing) \geq c_{\ref{7.19}}^{\,2}\, c_{\ref{7.18}}^{-1}\, \mathscr{E}_0(\mu)^{-1}$.
Letting $\varepsilon \downarrow 0$ yields
\begin{align}
\begin{aligned}
\P(W(E) \cap F \neq \varnothing) &\geq c_{\ref{7.19}}^{\,2}\, c_{\ref{7.18}}^{-1}\, \mathscr{E}_0(\mu)^{-1}.
\end{aligned}
\end{align}
Hence, we may take supremum over all $\mu \in P_0(E\times F)$ to obtain the lower bound in \eqref{hp:bd}.

\medskip

%$(\Rightarrow)$.
\noindent
\textit{Part 2: Upper bound.}
%Suppose $\P(W(E) \cap F \neq \varnothing) > 0$.
Clearly, we may assume that $\P(W(E) \cap F \neq \varnothing) > 0$ (otherwise, the upper bound in \eqref{hp:bd} holds trivially).
With the convention $\inf \varnothing = \infty$, define
\begin{align}
\tau_1 = \inf\{ t_1 : \exists\, t_2, \dots, t_N \text{ such that }  t=(t_1, \dots, t_N) \in E \text{ and } W(t) \in F \}.
\end{align}
If $\tau_1 = \infty$, then define $\tau_i = \infty$ for all $2 \leq i \leq N$. If $\tau_{1} \neq \infty$, then define 
\begin{align}\begin{split}
\tau_{i} = \inf\{ t_i : \, &\exists \, t_{i+1},\dots, t_N \text{ such that }\\
&(\tau_1, \dots, \tau_{i-1}, t_i, \dots, t_N) \in E \text{ and } W(\tau_1, \dots, \tau_{i-1}, t_i, \dots, t_N) \in F \}
\end{split}\end{align}
 inductively  for $i = 2, \dots, N$.
Let $\tau = (\tau_1, \dots, \tau_N)$.
Note that $W(\tau) \in F$ on the event $G:=\{\tau_i \neq \infty \text{ for all } i \}$.
Since $\P(G) = \P(W(E) \cap F \neq \varnothing) > 0$, we can define a Borel probability measure $\mu$ on $E\times F$ by 
\begin{align}\label{D:mu}
\mu(A) = \P\{ (\tau, W(\tau)) \in A\mid G\}.
\end{align}
Since $F$ has Lebesgue measure zero, $\mu(\{t\} \times F) = 0$ for all $t \in E$, i.e., $\mu\in P_0(E\times F)$.
%Our goal is to show that $\mathscr{E}_0(\mu) < \infty$. 
Our goal is to show that $\P(W(E) \cap F \ne \varnothing) \le K \mathscr{E}_0(\mu)^{-1}$ for some $K>1$ that depends only on $(N,d,a,b)$.
To this end, let 
\begin{align}
&\phi_\varepsilon(w) = \frac{1}{(2\pi\varepsilon^2)^{d/2}} \exp\left(- \frac{\|w\|^2}{2\varepsilon^2}\right), \quad w \in \mathbb{R}^d,\\
&Z_\varepsilon(\mu) = \int_{E\times F} \phi_\varepsilon(W(t) - y) \, \mu(dt\,dy).
\end{align}
Consider an arbitrary $s \in [a, b]^N$ and $\pi \subset \{1, \dots, N\}$.
We have
\begin{align}
\begin{aligned}\label{7.20}
\E[Z_\varepsilon(\mu)|\mathscr{F}_\pi(s)]
& = \int \E[\phi_\varepsilon(W(t) - y)| \mathscr{F}_\pi(s)] \,\mu(dt\,dy)\\
& \geq \int_{\substack{t \succcurlyeq_\pi s,\\t \neq s \,\,\,\,}} \E[\phi_{\varepsilon}(W_s(t) + C_{s, t}W(s) - y)|\mathscr{F}_\pi(s)] \, 
\mu(dt\, dy).
\end{aligned}
\end{align}
Let 
\begin{align}
\tilde{q}_{s, t}(w) = \frac{1}{\|t - s\|^{d/2}} \exp \left( - \frac{\|w\|^2}{2 \,c_{\ref{5.9}}\|t - s\|} \right), \quad w \in \mathbb{R}^d.
\end{align}
By the independence of $\{W_s(t) : t \succcurlyeq_\pi s\}$ and $\mathscr{F}_\pi(s)$, and \eqref{5.9},
\begin{align}
\begin{aligned}\label{5.18}
&\E[Z_\varepsilon(\mu)|\mathscr{F}_\pi(s)]\\
%& \geq \int_{\substack{t \succcurlyeq_\pi s\\t \neq s}} \E[\phi_{\varepsilon}(W_s(t) + C_{s, t}W(s) - y)|\mathscr{F}_\pi(s)] \, \mu(dt\,dy)\\
& \geq c_{\ref{5.18}} \int_{\substack{t \succcurlyeq_\pi s\\t \neq s}} \left[ \int_{\mathbb{R}^d} \phi_{\varepsilon}(w - (y - C_{s, t}W(s)))\, \tilde{q}_{s, t}(w)\, dw  \right] \mu(dt\,dy)\, {\bf 1}_{\{\|W(s)\| \leq b\}}\\
& = c_{\ref{5.18}} \int_{\substack{t \succcurlyeq_\pi s\\t \neq s}} \left[ \int_{\mathbb{R}^d} \phi_{\varepsilon}(w')\, \tilde{q}_{s, t}(w' + y - C_{s, t}W(s))\, dw' \right] \mu(dt\,dy)\, {\bf 1}_{\{\|W(s)\| \leq b\}}.
\end{aligned}
\end{align}
By the triangle inequality and \eqref{5.8}, we can find constants $c_{\ref{7.28}}, c_{\ref{7.28}}' > 0$ such that
\begin{align}
\begin{aligned}\label{7.28}
& \quad \|w' + y - C_{s, t}W(s)\|^2 \\
& \leq \|w' + y - W(s)\|^2 + \|(1 - C_{s, t})W(s)\|^2 + 2 \|w' + y - W(s)\| \|(1 - C_{s, t})W(s)\|\\
&\leq \|w' + y - W(s)\|^2 + c_{\ref{7.28}} \|t - s\| + c_{\ref{7.28}}' \|w' + y - W(s)\| \|t - s\|^{1/2}.
\end{aligned}
\end{align}
Consider the following two cases. If (i) $\|w' + y - W(s)\| \leq \|t - s\|^{1/2}$, then
\begin{align}\label{E:3.35}
\notag
\exp\left( -\frac{\|w' + y - C_{s, t} W(s)\|^2}{2 \,c_{\ref{5.9}} \|t - s\|} \right)
&\geq \exp\left(- \frac{\|w' + y - W(s)\|^2}{2 \, c_{\ref{5.9}}\|t - s\|} - \frac{(c_{\ref{7.28}}+ c_{\ref{7.28}}')\|t-s\|}{2 \,c_{\ref{5.9}}\|t-s\|}\right)\\
& \geq c_{\ref{E:3.35}}\exp\left(- \frac{\|w' + y - W(s)\|^2}{2 \, c_{\ref{5.9}}\|t - s\|}\right).
\end{align}
If (ii) $\|w' + y - W(s)\| > \|t - s\|^{1/2}$, then
\begin{align}\label{5.21}
\exp\left( -\frac{\|w' + y - C_{s, t} W(s)\|^2}{2 \,c_{\ref{5.9}}\|t - s\|}\right)
 \geq \exp\left(- \frac{(1 + c_{\ref{7.28}} + c_{\ref{7.28}}')\|w' + y - W(s)\|^2}{2 \,c_{\ref{5.9}}\|t - s\|}\right).
\end{align}
Hence, we have
\begin{align}
\begin{aligned}\label{5.22}
&\sup_{s \in [a, b]^N \cap \Q^N}\E[Z_\varepsilon(\mu)|\mathscr{F}_\pi(s)] \\
& \geq c_{\ref{5.22}} \int_{\substack{t \succcurlyeq_\pi s\\t \neq s}} \left[\int_{\mathbb{R}^d}\phi_{\varepsilon}(w') \, q_{s, t}(w' + y - W(s)) 
\, dw' \right] \mu(dt\, dy) \, {\bf 1}_{\{\|W(s)\| \leq b\}}\\
& = c_{\ref{5.22}} \int_{\substack{t \succcurlyeq_\pi s\\t \neq s}} (\phi_{\varepsilon} \ast q_{s, t})(W(s) - y) \, \mu(dt\, dy) \, 
{\bf 1}_{\{\|W(s)\| \leq b\}}
\end{aligned}
\end{align}
uniformly for all $s\in [a, b]^N \cap \Q^N$ and $\pi \subset \{1, \dots, N\}$, where
\begin{align}\label{7.32}
q_{s, t}(w) = \frac{1}{\|t - s\|^{d/2}} \exp\left(- \frac{\|w\|^2}{2 c_{\ref{7.32}}\|t - s\|} \right)
\end{align}
with $c_{\ref{7.32}} = c_{\ref{5.9}}/(1 + c_{\ref{7.28}} + c_{\ref{7.28}}') < 1$ since $c_{\ref{5.9}} < 1$.

On the event $G$, we have $\tau \in E \subset [a, b]^N$ and $\|W(\tau)\| < b$.
Also, $s \mapsto (\phi_\varepsilon \ast q_{s, t})(W(s) - y)$ is continuous. By choosing a random sequence $(s_m)$ in 
$[a, b]^N \cap \mathbb{Q}^N$ converging to $\tau$, and by applying \eqref{5.22} and Fatou's lemma, we obtain
\begin{align}
\begin{aligned}
&\sup_{s \in [a, b]^N \cap \,\mathbb{Q}^N} \E[Z_\varepsilon(\mu)|\mathscr{F}_\pi(s)] \\
& \geq c_{\ref{5.22}} \liminf_{m\to \infty} \int_{\substack{t \succcurlyeq_\pi s_m\\ t \neq s_m}} (\phi_{\varepsilon} \ast q_{s_m, t})(W(s_m) - y) 
\,\mu(dt\,dy)\,{\bf 1}_{\{\|W(s_m)\|\le b\}}\\
& \geq c_{\ref{5.22}} 
 \int_{\substack{t \succcurlyeq_\pi \tau\\ t \neq \tau}} (\phi_{\varepsilon} \ast q_{\tau, t})(W(\tau) - y) \,\mu(dt\,dy)\,{\bf 1}_{G}.
\end{aligned}
\end{align}
Squaring both sides, taking expectations, and then using the Cauchy--Schwarz inequality, we get that
\begin{align}
\begin{aligned}\label{7.34}
& \E\left(\sup_{s \in [a, b]^N \cap \,\mathbb{Q}^N} \E[Z_{\varepsilon}(\mu)|\mathscr{F}_\pi(s)]^2\right) \\
&\geq  c_{\ref{5.22}}\, \P(G) \int \left(\int_{\substack{t \succcurlyeq_\pi s\\ t\neq s}} (\phi_{\varepsilon} \ast q_{s, t})(x - y) \,
\mu(dt\,dy)\right)^2 \mu(ds\,dx)\\
& \geq c_{\ref{5.22}} \, \P(G) \left(\iint_{t \succcurlyeq_\pi s} (\phi_{\varepsilon} \ast q_{s, t})(x - y) \,\mu(dt\,dy)\, \mu(ds\,dx)\right)^2\\
& = c_{\ref{5.22}}\, \P(G)\, J_{\varepsilon, \pi}^{\, 2},
\end{aligned}
\end{align}
where
\begin{align}
J_{{\varepsilon}, \pi} = \iint_{t \succcurlyeq_\pi s} (\phi_{\varepsilon} \ast q_{s, t})(x - y) \,\mu(ds\,dx)\, \mu(dt\,dy).
\end{align}

Next, we establish an upper bound for the expectation in \eqref{7.34}. Using Cairoli's maximal inequality \eqref{E:Cairoli}, we have
\begin{align}\label{7.36}
\notag
& \E\left( \sup_{s \in [a, b]^N \cap\, \mathbb{Q}^N} \E[Z_\varepsilon(\mu)|\mathscr{F}_\pi(s)]^2\right) 
 \leq 4^N \E[Z_\varepsilon(\mu)^2]\\
& = 4^N \iint \E[\phi_\varepsilon(W(s) - x)\phi_\varepsilon(W(t) - y)] \, \mu(ds\, dx) \, \mu(dt\, dy)\\
& = 4^N \sum_{\pi \subset \{1,\dots,N\}} \iint_{t \succcurlyeq_\pi s} \E[\phi_\varepsilon(W(s) - x)\phi_\varepsilon(W_s(t) + C_{s, t}W(s) -y)] \,
 \mu(ds\, dx) \, \mu(dt\, dy).
\notag
\end{align}
We claim that 
\begin{align}
\begin{aligned}\label{7.37}
& \E[\phi_\varepsilon(W(s) - x)\phi_\varepsilon(W_s(t) + C_{s, t}W(s) -y)] \\
& \quad \quad \leq  c_{\ref{7.37}}\, \E[ \phi_{2 \varepsilon}(W(s) - x)\phi_{2\varepsilon}(W_s(t) + C_{s, t}x - y)]
\end{aligned}
\end{align}
for some constant $c_{\ref{7.37}} > 0$ independent of $\varepsilon$. 
It suffices to show that
\begin{align}
\begin{aligned}
& \exp\left(-\frac{\|W(s) - x\|^2}{2 \varepsilon^2} \right) \exp \left(- \frac{\|W_s(t) + C_{s, t} W(s) - y\|^2}{2 \varepsilon^2}\right) \\
& \quad \leq \exp\left(- \frac{\|W(s) - x\|^2}{2(2 \varepsilon)^2}\right) \exp\left(- \frac{\|W_s(t) + C_{s, t}x - y\|^2}{2(2\varepsilon)^2} \right) .
\end{aligned}
\end{align}
To prove this, note that by the triangle inequality, we have
\begin{align}
\begin{aligned}
\|W_s(t) + C_{s, t}x - y\|^2 \leq \|W_s(t) + & C_{s, t}W(s) - y\|^2 + \|C_{s, t}(W(s) - x)\|^2\\ 
& + 2\|W_s(t) + C_{s, t}W(s) - y\|\|C_{s, t}(W(s) - x)\|.
\end{aligned}
\end{align}
Now consider two cases.
If (i) $\|W_s(t) + C_{s, t}W(s) - y\| \leq \|C_{s, t}(W(s) - x)\|$, then 
\begin{align}
\|W_s(t) + C_{s, t}x - y\|^2 \leq \|W_s(t) + C_{s, t} W(s) - y\|^2 + 3 \|C_{s, t}(W(s) - x)\|^2
\end{align}
and it follows that
\small
\begin{align}
\begin{aligned}
& \quad \exp\left(-\frac{\|W(s) - x\|^2}{2 \varepsilon^2} \right) \exp\left(-\frac{\|W_s(t) + C_{s, t} W(s) - y\|^2}{2 \varepsilon^2}\right) \\
& \leq \exp\left(-\frac{\|W(s) - x\|^2}{8 \varepsilon^2} - \frac{3\|C_{s, t}(W(s) - x)\|^2}{8 \varepsilon^2} \right) \exp\left(-\frac{\|W_s(t) 
+ C_{s, t} W(s) - y\|^2}{8 \varepsilon^2}\right) \\
& \leq \exp\left(- \frac{\|W(s) - x\|^2}{2(2 \varepsilon)^2}\right) \exp\left(- \frac{\|W_s(t) + C_{s, t} x - y\|^2}{2(2\varepsilon)^2} \right) .
\end{aligned}
\end{align}
\normalsize
If (ii) $\|W_s(t) + C_{s, t}W(s) - y\| > \|C_{s, t}(W(s) - x)\|$, then
\begin{align}
\|W_s(t) + C_{s, t}x - y\|^2 \leq 3 \|W_s(t) + C_{s, t} W(s) - y\|^2 + \|C_{s, t}(W(s) - x)\|^2,
\end{align}
thus we have
\small
\begin{align}
\begin{aligned}
& \quad \exp\left(-\frac{\|W(s) - x\|^2}{2 \varepsilon^2} \right) \exp\left(-\frac{\|W_s(t) + C_{s, t} W(s) - y\|^2}{2 \varepsilon^2}\right) \\
& \leq \exp\left(-\frac{\|W(s) - x\|^2}{4 \varepsilon^2} - \frac{3\|C_{s, t}(W(s) - x)\|^2}{8 \varepsilon^2} \right) \exp\left(-\frac{3\|W_s(t) 
+ C_{s, t} W(s) - y\|^2}{8 \varepsilon^2}\right) \\
& \leq \exp\left(- \frac{\|W(s) - x\|^2}{2(2 \varepsilon)^2}\right) \exp\left(- \frac{\|W_s(t) + C_{s, t}x - y\|^2}{2(2\varepsilon)^2} \right) .
\end{aligned}
\end{align}
\normalsize
Hence the claim \eqref{7.37} is proved.
Using \eqref{7.37}, independence of $\{W_s(t) : t \succcurlyeq_\pi s\}$ and $W(s)$, the bound
\begin{align}
\E[\phi_{2\varepsilon}(W(s) - x)] \leq {(2\pi a^N)^{-d/2}},
\end{align}
and \eqref{5.9}, we see that \eqref{7.36} is further bounded above by
\begin{align}
\begin{aligned}\label{7.45}
& \quad 4^N c_{\ref{7.37}} \sum_\pi \iint_{t \succcurlyeq_\pi s} \E[\phi_{2\varepsilon}(W(s) - x)]  \E[\phi_{2\varepsilon}(W_s(t) 
+ C_{s, t}x - y)]  \, \mu(ds\, dx) \, \mu(dt\, dy)\\
& \leq c_{\ref{7.45}} \iint \int_{\mathbb{R}^d} \phi_{2\varepsilon}(w + C_{s, t}x - y) \frac{1}{\|t - s\|^{d/2}} \exp\left(- \frac{\|w\|^2}
{2 \,c_{\ref{5.9}}' \|t - s\|} \right) \,dw\, \mu(ds\, dx) \, \mu(dt\, dy)\\
& = c_{\ref{7.45}} \iint \int_{\mathbb{R}^d} \phi_{2\varepsilon}(w') \frac{1}{\|t - s\|^{d/2}} \exp\left(- \frac{\|w' + y - C_{s, t} x\|^2}
{2 \,c_{\ref{5.9}}' \|t - s\|} \right) \,dw'\, \mu(ds\, dx) \, \mu(dt\, dy).
\end{aligned}
\end{align}
Now we may repeat the argument from \eqref{7.13} to \eqref{7.15} to deduce that
\begin{align}
\exp\left(- \frac{\|w' + y - C_{s, t}x\|^2}{2\, c_{\ref{5.9}}'\|t - s\|} \right)  \leq \exp\left(- \frac{\|w' + y - x\|^2}{2 \,c_{\ref{7.14}}\|t - s\|} \right)
\end{align}
for some constant $c_{\ref{7.14}} > 1$.
Let $r_{s, t}$ be defined as in \eqref{7.16}.
Then from \eqref{7.36} and \eqref{7.45},
\begin{align}
\begin{aligned}\label{7.50}
& \quad \E\left(\sup_{s \in [a, b]^N \cap\, \mathbb{Q}^N} \E[Z_\varepsilon(\mu)|\mathscr{F}_\pi(s)]^2 \right)\\
& \leq c_{\ref{7.45}} \iint \int_{\mathbb{R}^d} \phi_{2\varepsilon}(w') \, r_{s, t}(w' + y - x) \, dw' \, \mu(ds\, dx) \, \mu(dt\, dy)\\
& = c_{\ref{7.45}} \iint (\phi_{2\varepsilon} \ast r_{s, t})(x - y) \, \mu(ds\, dx) \, \mu(dt\, dy)\\
& \leq c_{\ref{7.50}} \iint (\phi_\varepsilon \ast p_{s, t})(x - y)\, \mu(ds\, dx) \, \mu(dt\, dy) := c_{\ref{7.50}} \, I_\varepsilon,
\end{aligned}
\end{align}
where we use Lemma \ref{lemma4.2} in the last inequality and we have set
\begin{align}
I_\varepsilon = \iint (\phi_\varepsilon \ast p_{s, t})(x - y)\, \mu(ds\, dx) \, \mu(dt\, dy).
\end{align}

Now, combine \eqref{7.34} and \eqref{7.50}, and sum over all $\pi \subset \{1, \dots, N\}$ to get
\begin{align}
\begin{aligned}\label{5.29}
2^N c_{\ref{7.50}} \, I_\varepsilon
& \geq c_{\ref{5.22}} \, \P(G)\sum_\pi J_{\varepsilon, \pi}^2\\
& \geq 2^{-N}c_{\ref{5.22}}\,\P(G) \bigg(\sum_\pi J_{\varepsilon, \pi}\bigg)^2\\
& = 2^{-N}c_{\ref{5.22}}\, \P(G) \left(\iint (\phi_{\varepsilon} \ast q_{s, t})(x - y) \, \mu(ds\, dx) \, \mu(dt\, dy)  \right)^2.
\end{aligned}
\end{align}
Using Lemma \ref{lemma4.2} again, we have
\begin{align}
\begin{aligned}\label{7.52}
c_{\ref{7.52}}\, I_\varepsilon & \geq \P(G) \left( \iint (\phi_{\varepsilon} \ast p_{s, t})(x - y)  \, \mu(ds\, dx) \, \mu(dt\, dy) \right)^2\\
& = \P(G) \, I_\varepsilon^2,
\end{aligned}
\end{align}
for some constant $c_{\ref{7.52}} > 0$ independent of $\varepsilon$.
Note that $I_\varepsilon$ is finite for each $\varepsilon \in (0, 1)$.
Therefore,
\begin{align}
I_\varepsilon = \iint (\phi_\varepsilon \ast p_{s, t})(x - y)\, \mu(ds\, dx) \, \mu(dt\, dy) \leq c_{\ref{7.52}} \, \P(G)^{-1}.
\end{align}
%Since $\P(G) = \P(W(E) \cap F \neq \varnothing) > 0$, letting $\varepsilon \to 0$ yields 
%\begin{align}
%\mathscr{E}_0(\mu) = \iint p_{s, t}(x - y) \,\mu(ds\,dx)\,\mu(dt\,dy) < \infty.
%\end{align}
Since $\P(G) = \P(W(E) \cap F \neq \varnothing) > 0$, letting $\varepsilon \to 0$ yields 
\begin{align}
\P(W(E) \cap F \neq \varnothing) \le c_{\ref{7.52}} \, \mathscr{E}_0(\mu)^{-1} \le c_{\ref{7.52}} \, C_0(E\times F).
\end{align}
This completes the proof of Theorem \ref{T:hit}.
\end{proof}

\section{Intersection with the range of an additive stable process}

The proof of Theorem \ref{T:dim:cap}, which will be given in the next section, is based on a codimension 
argument where we check whether the set $W(E) \cap F$ intersects the range of an additive stable process $X$.
The goal of this section is to prove Proposition \ref{thm6.1} below, which gives a necessary and sufficient condition for $W(E) \cap F$ to intersect the range of $X$.

For the rest of the paper, we assume that 
$\{X^{(1)}(u) : u \ge 0\}, \dots, \{ X^{(n)}(u) : u \ge 0 \}$
are $\mathbb{R}^d$-valued $\alpha$-stable processes with $0<\alpha<2$ and 
\[ \E(e^{i \langle \xi, X^{(k)}(u)\rangle}) = e^{-u \|\xi\|^\alpha/2},
\]
and assume that $W, X^{(1)}, \dots, X^{(n)}$ are independent. 
Let $X = \{X(u): u \in \R^n_+\}$ be the $\mathbb R^d$-valued additive $\alpha$-stable process defined by 
\[
X(u) = X^{(1)}(u_1) + \cdots + X^{(n)}(u_n)
\]
for any $u = (u_1, \dots, u_n) \in \mathbb{R}^n_+$. 
The aforementioned codimension argument is based on the following result on the hitting probability for the range of $X$.

\begin{proposition}\cite[Theorem 4.4]{KX2005}
\label{prop4.1}
Let $X$ be the additive $\alpha$-stable process defined above. Then for any Borel set $G \subset \mathbb{R}^d$, 
$\P(G \cap X(\mathbb{R}^n_+ \setminus \{0\}) \ne \varnothing) > 0$ if and only if ${\EuScript Cap}_{d - \alpha n}(G) > 0$,
where ${\EuScript Cap}_{d - \alpha n}$ denotes the $(d - \alpha n)$-dimensional Bessel-Riesz capacity in the Euclidean 
metric $($cf. e.g., \cite{Fal, K,Mattila}$)$.
\end{proposition}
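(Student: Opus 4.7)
The plan is to reduce both directions of the equivalence to a single kernel estimate relating the occupation density of $X$ to the Bessel--Riesz kernel of order $d - \alpha n$. Since $X^{(1)},\dots,X^{(n)}$ are independent symmetric $\alpha$-stable processes, $X(u)$ has the same distribution as $Y(u_1+\cdots+u_n)$ for a single $\R^d$-valued symmetric $\alpha$-stable process $Y$ with self-similar density $p_t(x)=t^{-d/\alpha}p_1(t^{-1/\alpha}x)$. The first key step is to establish
\[
\int_{[0,T]^n} p_{u_1+\cdots+u_n}(x)\,du \asymp \|x\|^{-(d-\alpha n)}
\]
for small $\|x\|$ when $d>\alpha n$, by reducing the $n$-fold integral to a one-dimensional integral in $t=u_1+\cdots+u_n$ (gaining a factor of $t^{n-1}$) and applying the substitution $v=t^{-1/\alpha}x$. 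The case $d\le\alpha n$ is essentially trivial, since then ${\EuScript Cap}_{d-\alpha n}(G)>0$ for every nonempty Borel set and $X$ can be shown to be neighborhood-recurrent.

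For the \emph{sufficient} direction, given a probability measure $\mu$ on (WLOG compact) $G$ with $\mathscr{E}_{d-\alpha n}(\mu)<\infty$, introduce a Gaussian mollifier $\phi_\eps$ and define
\[
J_\eps = \int_{[1,2]^n}\int_G \phi_\eps(X(u)-x)\,\mu(dx)\,du.
\]
A direct computation, using that $p_{u_1+\cdots+u_n}(x)$ is bounded below on $[1,2]^n\times G$, yields $\E[J_\eps]\ge c_1>0$ uniformly in $\eps$. For the second moment, partition $[1,2]^{2n}$ into the $2^n$ sectors defined by the coordinatewise orderings of $u$ and $v$; on each sector the increment decomposition $X(v)=X(u)+\sum_{k=1}^n(X^{(k)}(v_k)-X^{(k)}(u_k))$ gives the independence needed to factor the joint density, and combined with the kernel estimate above this produces $\E[J_\eps^2]\le C\,\mathscr{E}_{d-\alpha n}(\mu)$ uniformly in $\eps$. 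The Paley--Zygmund inequality then gives $\P(X([1,2]^n)\cap G_\eps\ne\varnothing)\ge c>0$, and compactness of $X([1,2]^n)$ together with closedness of $G$ delivers the conclusion on letting $\eps\downarrow 0$.

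For the \emph{necessary} direction, assume $\P(X([a,b]^n)\cap G\ne\varnothing)>0$ for some $0<a<b$. For each $\eps>0$, on the event that $U_\eps:=\{u\in[a,b]^n:X(u)\in G_\eps\}$ is nonempty, define a random probability measure $\sigma_\eps$ as the pushforward of normalized Lebesgue measure on $U_\eps$ under $X$; this is supported in $\overline{G_\eps}\cap X([a,b]^n)$. A Fubini computation together with the kernel estimate shows $\E[\mathscr{E}_{d-\alpha n}(\sigma_\eps)]\le C$ uniformly in $\eps$. Extracting a weak subsequential limit $\sigma_0$ via tightness on the compact set $\overline{X([a,b]^n)}$, applying Fatou's lemma to the lower-semicontinuous Riesz energy, and averaging out the randomness produces a deterministic probability measure on $G$ of finite $(d-\alpha n)$-Riesz energy.

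The main obstacle is the necessity direction: since the $n$-parameter additive process has neither a strong Markov property nor a natural hitting-time construction, one cannot argue via first-entry stopping times. The occupation-measure / mollification route outlined above bypasses this, but making the limit rigorous when $G$ has positive capacity yet zero Lebesgue measure (so $|U_\eps|\to 0$) requires careful uniform control of the normalization of $\sigma_\eps$; this is the delicate technical point and is carried out in detail in \cite[Theorem 4.4]{KX2005}.
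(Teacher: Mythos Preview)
The paper does not give its own proof of this proposition: it is stated with an explicit attribution to \cite[Theorem~4.4]{KX2005} and used as a black box in the codimension argument of Section~5. There is therefore no in-paper proof to compare your proposal against.

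That said, your sketch is a reasonable outline of the strategy in the cited source. The kernel estimate you isolate is precisely what the present paper records as \eqref{6.23} (citing \cite[Proposition~4.6]{KX2015}), and the second-moment/Paley--Zygmund argument for sufficiency is the standard route. For necessity, your occupation-measure construction is in the right spirit, but note that the actual argument in \cite{KX2005} does not proceed via normalized Lebesgue measure on $U_\eps$ and weak limits; rather it uses a martingale/maximal-inequality scheme (analogous to Part~2 of the proof of Theorem~\ref{T:hit} and the ($\Rightarrow$) half of Proposition~\ref{thm6.1} in this paper) to bound the energy of a measure built from a first-hitting-type construction, which sidesteps the normalization difficulty you flag. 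Since you ultimately defer that step back to \cite{KX2005} anyway, the proposal is acceptable as a pointer to the literature, which is all the paper itself does.
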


The next lemma establishes the commuting property of the filtration generated by the Brownian sheet and the additive 
stable process, and Cairoli's maximal inequality for this filtration.

\begin{lemma}\label{lemma6.2}
Let $\pi \subset \{1, \dots, N\}$ and $\preccurlyeq^\pi$ be the partial ordering on $\mathbb{R}^N_+ \times \mathbb{R}^n_+$ defined 
by $(s, u) \preccurlyeq^\pi (t, v)$ iff $s \preccurlyeq_\pi t$ and $u \preccurlyeq v$.
Let $\mathscr{F}_s^\pi = \sigma\{W(t) : t \preccurlyeq_\pi s\}$, $\mathscr{G}_u = \bigvee_{i=1}^n \mathscr{G}^{(i)}_{u_i}$, where 
$\mathscr{G}^{(i)}_{u_i} = \sigma\{X^{(i)}(t) : t \leq u_i \}$, and $\mathscr{H}_{s, u}^\pi = \mathscr{F}_s^\pi \vee \mathscr{G}_u$.
Then $\{ \mathscr{H}_{s, u}^\pi : (s, u) \in \mathbb{R}^N_+ \times \mathbb{R}^n_+ \}$ is a commuting filtration with respect to the ordering 
$\preccurlyeq^\pi$.  As a consequence, for any $Z \in L^2(\P)$,
%\textcolor{blue}{$Z \in L^1(\P)$}, \footnote{\textcolor{blue} {Do we need to assume  $Z \in L^2(\P)$ here?}} 
the following Cairoli maximal inequality holds:
\begin{align}
\E\left(\sup_{(s, u) \in \mathbb{Q}^N_+ \times \mathbb{Q}^n_+} [\E(Z|\mathscr{H}_{s, u}^{\pi})]^2 \right) \leq 4^{N+n} \E(Z^2).
\end{align}
\end{lemma}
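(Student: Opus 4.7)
The plan is to establish the commuting property of $\{\mathscr{H}_{s,u}^\pi\}$ in two stages: first verify that the $W$-filtration $\{\mathscr{F}_s^\pi\}$ and the $X$-filtration $\{\mathscr{G}_u\}$ are each commuting, then glue them together using the independence of $W$ and $(X^{(1)}, \dots, X^{(n)})$. Commuting of $\{\mathscr{F}_s^\pi\}$ with respect to $\preccurlyeq_\pi$ is already recorded in \cite[Section 5]{KX2007}. For $\{\mathscr{G}_u\}$, I will exploit independence of $X^{(1)}, \dots, X^{(n)}$: for products $Y = \prod_{i=1}^n Y_i$ with $Y_i$ bounded and $\mathscr{G}^{(i)}_{v_i}$-measurable, and any $u \in \R^n_+$,
\begin{equation*}
\E[Y \mid \mathscr{G}_u] = \prod_{i=1}^n \E[Y_i \mid \mathscr{G}^{(i)}_{u_i}] = \prod_{i=1}^n \E[Y_i \mid \mathscr{G}^{(i)}_{u_i \wedge v_i}] = \E[Y \mid \mathscr{G}_{u \wedge v}],
\end{equation*}
where the middle equality uses the trivial commuting property of the one-parameter filtrations. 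A monotone class argument extends the identity to all bounded $\mathscr{G}_v$-measurable $Y$, which is the commuting property.

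To combine the two, I would fix $(s, u), (t, v) \in \R^N_+ \times \R^n_+$ and set $(s, u) \wedge^\pi (t, v) := (s \wedge_\pi t,\, u \wedge v)$. By another monotone class argument it suffices to verify the commuting relation on products $Y = Y_1 Y_2$ where $Y_1$ is bounded and $\mathscr{F}_t^\pi$-measurable and $Y_2$ is bounded and $\mathscr{G}_v$-measurable. The key observation is that the full $\sigma$-algebras $\sigma(W)$ and $\sigma(X^{(1)}, \dots, X^{(n)})$ are independent, so $Y_1$ and $Y_2$ remain conditionally independent under $\mathscr{H}_{s,u}^\pi = \mathscr{F}_s^\pi \vee \mathscr{G}_u$, and
\begin{equation*}
\E[Y_1 Y_2 \mid \mathscr{H}_{s, u}^\pi] = \E[Y_1 \mid \mathscr{F}_s^\pi]\, \E[Y_2 \mid \mathscr{G}_u] = \E[Y_1 \mid \mathscr{F}_{s \wedge_\pi t}^\pi]\, \E[Y_2 \mid \mathscr{G}_{u \wedge v}] = \E[Y \mid \mathscr{H}_{(s, u) \wedge^\pi (t, v)}^\pi],
\end{equation*}
where in the middle step I apply the already-established commuting properties of the two component filtrations. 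This yields commuting of $\{\mathscr{H}_{s, u}^\pi\}$ with respect to $\preccurlyeq^\pi$.

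For the Cairoli inequality, once commuting is in hand, the standard iteration of Doob's one-parameter $L^2$ maximal inequality along each of the $N + n$ coordinate directions applies verbatim; this is precisely the $(N+n)$-parameter version of \cite[Corollary 3.2]{KX2007} and produces the factor $4^{N+n}$. The restriction of the supremum to $\Q^N_+ \times \Q^n_+$ is automatic because the filtration is right-continuous along rational parameters (which is all that the proof of the iterated Doob inequality requires).

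The main obstacle will be the rigorous factorization $\E[Y_1 Y_2 \mid \mathscr{H}_{s,u}^\pi] = \E[Y_1 \mid \mathscr{F}_s^\pi]\, \E[Y_2 \mid \mathscr{G}_u]$; this is where the independence of $W$ and the $X^{(i)}$'s is essential, since without it the cross term would couple the two sub-filtrations and the monotone class reduction to product functions would no longer isolate the $W$ and $X$ contributions. After that factorization is secured, both the commuting property and the Cairoli-type $L^2$ maximal inequality reduce to well-understood one-parameter facts applied coordinate by coordinate.
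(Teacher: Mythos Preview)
Your proposal is correct and follows essentially the same route as the paper: verify commuting of $\{\mathscr{F}^\pi_s\}$ (cited from \cite{KX2007}) and of $\{\mathscr{G}_u\}$ (via independence of the $X^{(i)}$'s and a product/monotone-class reduction), then glue them together using the factorization $\E[Y_1 Y_2 \mid \mathscr{H}^\pi_{s,u}] = \E[Y_1 \mid \mathscr{F}^\pi_s]\,\E[Y_2 \mid \mathscr{G}_u]$ coming from the independence of $W$ and $X$. The only minor divergence is in justifying the maximal inequality: the paper works on finite subsets $\mathbb{Q}_m^N \times \mathbb{Q}_m^n$ (which are order-isomorphic to a discrete lattice, so the standard discrete Cairoli inequality applies directly) and then lets $m\to\infty$, whereas your appeal to ``right-continuity along rational parameters'' is not quite the right mechanism---but this is a small technical point and does not affect the substance of the argument.
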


\begin{proof}
First, we know that $\{\mathscr{F}_s^\pi\}$ is commuting with respect to $\preccurlyeq_\pi$ thanks to \cite[Proposition 3.1]{KX2007}.
To see that $\{\mathscr{G}_u\}$ is commuting with respect to $\preccurlyeq$, it suffices to show that $\E(Y|\mathscr{G}_u) = 
\E(Y|\mathscr{G}_{u \wedge v})$ for $Y$ of the form $\prod_{i=1}^n {\bf 1}_{A_i}$, where $A_i \in \mathscr{G}^{(i)}_{v_i}$. 
By the independence of the $X^{(i)}$'s, we have $\E(Y|\mathscr{G}_u) = \prod_{i=1}^n \E({\bf 1}_{A_i}|\mathscr{G}^{(i)}_{u_i})$ and 
$\E(Y|\mathscr{G}_{u \wedge v}) = \prod_{i=1}^n \E({\bf 1}_{A_i}|\mathscr{G}^{(i)}_{u_i \wedge v_i})$. Since $A_i \in \mathscr{G}_{v_i}^{(i)}$, we have $\E({\bf 1}_{A_i}|\mathscr{G}^{(i)}_{u_i}) = \E({\bf 1}_{A_i}|\mathscr{G}^{(i)}_{u_i \wedge v_i})$, so $\{\mathscr{G}_u\}$ is commuting.
Then by the independence of $W$ and the $X^{(i)}$'s, for any $A \in \mathscr{F}_t^\pi$ and $B \in \mathscr{G}_v$, we have 
\[
\E({\bf 1}_{A\cap B}|\mathscr{H}_{s, u}^\pi) = \E({\bf 1}_A|\mathscr{F}_s^\pi) \E({\bf 1}_B|\mathscr{G}_u) = \E({\bf 1}_A|\mathscr{F}^\pi_{s \wedge_\pi t}) 
\E({\bf 1}_B|\mathscr{G}_{u \wedge v}) = \E({\bf 1}_{A \cap B} |\mathscr{H}^\pi_{s \wedge_\pi t, u \wedge v}).\]
This implies that $\E(Y|\mathscr{H}_{s, u}^\pi) = \E(Y|\mathscr{H}_{s \wedge_\pi t, u \wedge v}^\pi)$ for all bounded 
$\mathscr{H}_{t, v}^\pi$-measurable function $Y$. Hence $\{\mathscr{H}^\pi_{s, u}\}$ is commuting with respect to $\preccurlyeq^\pi$.

To prove the maximal inequality, let $\{q_0, q_1, \dots\}$ be an enumeration of $\mathbb{Q}_+$ and $\mathbb{Q}_m = \{q_0, \dots, q_m\}$.
We can apply Cairoli's maximal inequality \cite[Theorem 2.3.1, p.19; Theorem 3.5.1, p.37]{K} to the martingale $\E[Z|\mathscr{H}^\pi_{s, u}]
$ with the order $\preccurlyeq^\pi$ on each $\mathbb{Q}_m^N \times \mathbb{Q}_m^n$ because $(\mathbb{Q}^N_m \times \mathbb{Q}^n_m, \preccurlyeq^\pi)$ is order isomorphic to $\{ (0, \dots, 0) \preccurlyeq s \preccurlyeq (m, \dots, m) : s \in \mathbb{N}_0^{N+n}\}$. 
Then let $m \to \infty$ to finish the proof.
\end{proof}

\begin{lemma}\label{lemma6.3}
Let $p_{s, t}, r_{s, t}$ and $q_{s, t}$ be as in Lemma \ref{lemma4.2}.
Let $E \subset \mathbb{R}^N$, $F \subset \mathbb{R}^d$ be compact sets and $\mu \in P_0(E\times F)$.
Let $\kappa \in L^1(\mathbb{R}^d)$ be a positive definite lower semicontinuous function. 
Let $\phi_1, \phi_2$ be positive definite functions in either $C_c(\mathbb{R}^d)$ or the Schwartz space $\mathcal{S}(\mathbb{R}^d)$ such that $0 \leq \widehat{\phi}_1 \leq \widehat{\phi}_2$.
Assume that $\phi_1 \ast \kappa$ 
and $\phi_2 \ast \kappa$ are continuous.
Then
\begin{align}
\begin{aligned}\label{6.1}
&\iint (\phi_1 \ast p_{s, t})(x - y)\, (\phi_1 \ast \kappa)(x - y) \, \mu(ds\, dx) \, \mu(dt\, dy) \\
& \hspace{50pt} \leq \iint (\phi_2 \ast p_{s, t})(x - y)\,(\phi_2 \ast \kappa)(x - y)\, \mu(ds\, dx) \, \mu(dt\, dy)
\end{aligned}
\end{align}
and
\begin{align}
\begin{aligned}\label{6.2}
&c_2^{-(1+\frac{d}{2})} \iint (\phi_1 \ast r_{s, t})(x - y)\, (\phi_1 \ast \kappa)(x - y) \, \mu(ds\, dx) \, \mu(dt\, dy)  \\
& \hspace{50pt} \leq \iint (\phi_1 \ast p_{s, t})(x - y) \, \mu(ds\, dx) \, (\phi_1 \ast \kappa)(x - y) \,\mu(dt\, dy) \\
&\hspace{80pt} \leq c_1^{-(1+\frac{d}{2})} \iint (\phi_1 \ast q_{s, t})(x - y) \,(\phi_1 \ast \kappa)(x - y) \, \mu(ds\, dx) \, \mu(dt\, dy).
\end{aligned}
\end{align}
\end{lemma}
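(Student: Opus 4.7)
The proof follows the template of Lemma \ref{lemma4.2} with one additional Fourier variable introduced to handle the extra factor $(\phi_i \ast \kappa)(x-y)$. The key observation is that this factor has nonnegative Fourier transform $\widehat{\phi}_i \widehat{\kappa}$ (since both $\phi_i$ and $\kappa$ are positive definite, so $\widehat{\phi}_i, \widehat{\kappa} \ge 0$), hence the positivity structure in Fourier space that drives the monotonicity in $\widehat{\phi}_k$ survives unchanged. First I would invoke Lemma \ref{lem:mu_n} to approximate $\mu$ weakly by discrete probability measures $\mu_n = \sum_{i=1}^m a_i\, \delta_{s_i} \otimes \delta_{x_i}$ supported in $E \times F$, reducing both inequalities to their analogs for each $\mu_n$.

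For a discrete $\nu = \sum_{i=1}^m a_i\, \delta_{s_i} \otimes \delta_{x_i}$, I would Fourier-invert both convolutions at $z = x_i - x_j$, writing
\[
(\phi_k \ast p_{s_i,s_j})(z) = \frac{1}{(2\pi)^d}\! \int \widehat{\phi}_k(\xi)\, \widehat{p}_{s_i,s_j}(\xi)\, e^{-i\langle \xi, z\rangle}\, d\xi, \qquad (\phi_k \ast \kappa)(z) = \frac{1}{(2\pi)^d}\! \int \widehat{\phi}_k(\xi')\, \widehat{\kappa}(\xi')\, e^{-i\langle \xi', z\rangle}\, d\xi'.
\]
Fourier inversion is legitimate because $\widehat{\phi}_k \in L^1$ when $\phi_k \in C_c$ or $\mathcal{S}$ is positive definite (by Bochner, $\widehat{\phi}_k$ is a finite nonnegative measure; combined with $\phi_k \in L^1$ forcing continuity of $\widehat{\phi}_k$, one obtains integrability). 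Multiplying the two representations produces the exponential $e^{-i\langle \xi + \xi', z\rangle}$. Inserting $\widehat{p}_{s,t}(\xi) = (2\pi)^{d/2} P_\xi(s-t)$, representing $P_\xi$ as the inverse Fourier transform of $\widehat{P}_\xi \ge 0$ from \eqref{3.7}, summing over $i,j$, and using $\sum_{i,j} a_i a_j\, e^{-i\langle \eta, s_i - s_j\rangle - i\langle \xi + \xi', x_i - x_j\rangle} = \bigl| \sum_i a_i\, e^{i\langle \eta, s_i\rangle + i\langle \xi + \xi', x_i\rangle} \bigr|^2$, Fubini--Tonelli yields
\[
\iint (\phi_k \ast p_{s,t})(x-y)(\phi_k \ast \kappa)(x-y)\, \nu(ds\,dx)\, \nu(dt\,dy) = c \iiint \widehat{\phi}_k(\xi)\, \widehat{\phi}_k(\xi')\, \widehat{\kappa}(\xi')\, \widehat{P}_\xi(\eta)\, \Phi_\nu(\xi, \xi', \eta)\, d\xi\, d\xi'\, d\eta,
\]
where $c = c(N,d) > 0$ and $\Phi_\nu(\xi, \xi', \eta) := \bigl| \sum_i a_i\, e^{i\langle \eta, s_i\rangle + i\langle \xi + \xi', x_i\rangle} \bigr|^2 \ge 0$. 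Every factor in the integrand except $\widehat{\phi}_k(\xi)\widehat{\phi}_k(\xi')$ is nonnegative and independent of $k$, so the hypothesis $0 \le \widehat{\phi}_1 \le \widehat{\phi}_2$ immediately yields \eqref{6.1} for $\nu$.

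To pass $n \to \infty$, I would use continuity and boundedness of $(s, x, t, y) \mapsto (\phi_k \ast p_{s,t})(x-y)(\phi_k \ast \kappa)(x-y)$ on $(E \times F)^2$ (with the convention $p_{s,s} = (2\pi)^{d/2} \delta_0$, so that $\phi_k \ast p_{s,s} = (2\pi)^{d/2} \phi_k$ remains bounded and continuous, while $\phi_k \ast \kappa$ is continuous by assumption), together with the weak convergence $\mu_n \to \mu$, to obtain \eqref{6.1} in the limit. For \eqref{6.2}, I would repeat the same expansion with $q_{s,t}$ and $r_{s,t}$ in place of $p_{s,t}$; the corresponding Cauchy-type functions $Q_\xi, R_\xi$ have Fourier transforms obeying the pointwise sandwich $c_2^{-(1+d/2)} \widehat{R}_\xi(\eta) \le \widehat{P}_\xi(\eta) \le c_1^{-(1+d/2)} \widehat{Q}_\xi(\eta)$ from Lemma \ref{lemma4.2}, and since the extra nonnegative weight $\widehat{\phi}_1(\xi')\widehat{\kappa}(\xi')$ rides along unchanged, the sandwich carries through the triple integral verbatim. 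The main technical care lies in justifying Fubini--Tonelli in the triple Fourier representation, which is possible because every factor in the integrand is nonnegative and the total integral equals the finite left-hand side for the discrete measure $\nu$; beyond this bookkeeping, the argument is a direct transcription of the one for Lemma \ref{lemma4.2}.
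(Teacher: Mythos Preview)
Your proposal is correct and follows essentially the same approach as the paper. The only cosmetic difference is that the paper applies Lemma~\ref{lemma4.1} to the product $(\phi_k \ast p_{s,t})\cdot(\phi_k \ast \kappa)$, whose Fourier transform is the convolution $(\widehat{\phi}_k\,\widehat{p}_{s,t}) \ast (\widehat{\phi}_k\,\widehat{\kappa})$, arriving at the integrand $\widehat{P}_z(\eta)\,\widehat{\phi}_k(z)\,\widehat{\phi}_k(\xi-z)\,\widehat{\kappa}(\xi-z)\bigl|\sum_i a_i\,\widehat{\delta}_{x_i}(\xi)\,\widehat{\delta}_{s_i}(\eta)\bigr|^2$; your explicit double Fourier inversion with separate variables $\xi,\xi'$ gives the same expression after the change of variables $z=\xi$, $\xi_{\mathrm{paper}}=\xi+\xi'$.
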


\begin{proof}
As in Lemma \ref{lemma4.2}, it suffices to prove that for $\nu = \sum_{i=1}^n a_i (\delta_{s_i} \otimes \delta_{x_i})$,
\begin{align}
\begin{aligned}\label{6.3}
&\iint [(\phi_1 \ast p_{s, t})\cdot (\phi_1 \ast \kappa)](x - y) \, \nu(ds\, dx) \, \nu(dt\, dy)\\
& \quad \quad \leq \iint [(\phi_2 \ast p_{s, t})\cdot(\phi_2 \ast \kappa)](x - y)\, \nu(ds\, dx) \, \nu(dt\, dy).
\end{aligned}
\end{align}
The Fourier transform of $(\phi_k \ast p_{s, t}) \cdot (\phi_k \ast \kappa)$ is equal to 
$(\widehat{\phi}_k \,\widehat{p}_{s, t}) \ast (\widehat{\phi}_k\, \widehat{\kappa})$.
By Lemma \ref{lemma4.1}, 
\begin{align*}
& \quad \iint (\phi_k \ast p_{s, t})(x - y)\, (\phi_k \ast \kappa)(x - y) \, \nu(ds\, dx) \, \nu(dt\, dy)\\
& = (2\pi)^{-d} \sum_{i, j =1}^n a_i a_j \iint \delta_{s_i}(ds) \delta_{s_j}(dt) \int d\xi \int dz \, \widehat{\phi}_k(z)  \widehat{p}_{s, t}(z)  \widehat{\phi}_k(\xi - z) \widehat{\kappa}(\xi - z)  \widehat{\delta}_{x_i}(\xi) \overline{\widehat{\delta}_{x_j}(\xi)}\\
& = (2\pi)^{-\frac{d}{2}} \sum_{i, j = 1}^n a_i a_j \int dz \int d\xi \,\widehat{\phi}_k(z) \widehat{\phi}_k(\xi - z) \widehat{\kappa}(\xi - z) \widehat{\delta}_{x_i}(\xi) \overline{\widehat{\delta}_{x_j}(\xi)}  \iint \delta_{s_i}(ds) \delta_{s_j}(dt) P_z(s - t)\\
& = (2\pi)^{-N - \frac{d}{2}} \sum_{i, j = 1}^n a_i a_j \int dz \int d\xi \,\widehat{\phi}_k(z) \widehat{\phi}_k(\xi - z) \widehat{\kappa}(\xi - z) \widehat{\delta}_{x_i}(\xi)  \overline{\widehat{\delta}_{x_j}(\xi)} \int d\eta\, \widehat{P}_z(\eta)  \widehat{\delta}_{s_i}(\eta)  \overline{\widehat{\delta}_{s_j}(\eta)} \\
& = (2\pi)^{-N - \frac{d}{2}} \int dz \iint d\xi \, d\eta\, \widehat{P}_z(\eta) \widehat{\phi}_k(z)  \widehat{\phi}_k(\xi - z)  \widehat{\kappa}(\xi - z) \left|\sum_{i=1}^n a_i  \widehat{\delta}_{x_i}(\xi) \widehat{\delta}_{s_i}(\eta) \right|^2.
\end{align*}
Since $\widehat{P}_z(\eta)$ and $\widehat{\kappa}(\xi - z)$ are nonnegative and $0 \leq \widehat{\phi}_1 \leq \widehat{\phi}_2$, we obtain \eqref{6.3}, and hence \eqref{6.1} as in the proof of Lemma \ref{lemma4.2}. Similarly, we can use the inequality 
\[0 \leq c_2^{-(1+ \frac{d}{2})}\, \widehat{R}_z(\eta) \leq \widehat{P}_z(\eta) \leq c_1^{-(1+ \frac{d}{2})}\, \widehat{Q}_z(\eta)\]
to prove \eqref{6.2}.
\end{proof}

\begin{proposition}\label{thm6.1}
Let $E \subset (0, \infty)^N$ and $F \subset \mathbb{R}^d$ be compact sets.
Suppose $F$ has Lebesgue measure zero and $d - \alpha n > 0$. Then 
$\P(W(E) \cap F \cap X(\mathbb{R}^n_+\setminus\{0\}) \neq \varnothing) > 0$ if and only if $C_{d - \alpha n}(E \times F) > 0$.
\end{proposition}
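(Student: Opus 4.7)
The plan is to follow the two-part second-moment strategy used for Theorem \ref{T:hit}, but on the extended probability space carrying both $W$ and $X$, and to exploit the key fact that integrating the density of $X(u)$ in $u$ over a fixed compact box converts it into the Bessel--Riesz kernel $\kappa(w) = \|w\|^{-(d-\alpha n)}$ of order $d-\alpha n$. This is what turns the thermal energy $\mathscr E_0$ of Theorem \ref{T:hit} into $\mathscr E_{d - \alpha n}$.

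For the lower bound, suppose $C_{d-\alpha n}(E \times F) > 0$ and pick $\mu \in P_0(E \times F)$ with $\mathscr{E}_{d-\alpha n}(\mu) < \infty$. Fix $0 < a < b < \infty$ with $E \subset (a,b)^N$ and a compact rectangle $R \subset \R^n_+ \setminus \{0\}$, and introduce
\[
Z_\varepsilon(\mu) = \int_{E\times F} \int_R \phi_\varepsilon(W(t) - x)\, \phi_\varepsilon(X(u) - x)\, du\, \mu(dt\, dx),
\]
where $\phi_\varepsilon$ is the mollifier from Part 1 of the proof of Theorem \ref{T:hit}. I would bound $\E[Z_\varepsilon(\mu)]$ from below by a positive constant, using independence of $W$ and $X$ together with the uniform lower bound on the Gaussian density of $W(t)$ and on the $\alpha$-stable density of $X(u)$ over the compact set $R$. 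For the second moment, using independence of $W$ and $X$,
\[
\E[Z_\varepsilon(\mu)^2] = \iint\!\!\iint \E[\phi_\varepsilon(W(t)-x)\phi_\varepsilon(W(s)-y)]\, \E[\phi_\varepsilon(X(u)-x)\phi_\varepsilon(X(v)-y)]\, du\, dv\, \mu(ds\,dx)\mu(dt\,dy).
\]
The $W$-factor is bounded exactly as in Part 1 of Theorem \ref{T:hit} (pinned sheet representation followed by Lemma \ref{lemma4.2}) and yields a multiple of $(\phi_{c\varepsilon} \ast p_{s,t})(x-y)$. The $X$-factor, after integration over $u, v \in R$, is comparable to $(\phi_{c\varepsilon} \ast \kappa)(x-y)$ via the standard identity that the occupation density of an additive $\alpha$-stable process on a compact box is comparable to the Bessel--Riesz kernel of order $d-\alpha n$ (this is where $d - \alpha n > 0$ is used). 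Combining these gives $\E[Z_\varepsilon(\mu)^2] \le c\, \mathscr{E}_{d-\alpha n}(\mu)$, and Paley--Zygmund yields $\P(Z_\varepsilon(\mu) > 0) \ge c\, \mathscr{E}_{d-\alpha n}(\mu)^{-1}$. Letting $\varepsilon \downarrow 0$, compactness of $E \times F \times R$ forces $W(E) \cap F \cap X(R) \neq \varnothing$, and taking supremum over $\mu$ gives the lower bound.

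For the upper bound, assume $\P(W(E) \cap F \cap X(\R^n_+\setminus\{0\})\ne\varnothing) > 0$. Choose a compact rectangle $R \subset \R^n_+ \setminus \{0\}$ large enough that $G := \{W(E) \cap F \cap X(R) \ne \varnothing\}$ still has positive probability, and, mimicking the lexicographic construction in Part 2 of the proof of Theorem \ref{T:hit} applied jointly to the coordinates of $t$ and $u$, define a first-hit pair $(\tau, \sigma) \in E \times R$ with $W(\tau) = X(\sigma) \in F$. Let $\mu$ be the regular conditional law of $(\tau, W(\tau))$ given $G$; since $F$ is Lebesgue null, $\mu \in P_0(E \times F)$. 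Now set
\[
Z_\varepsilon(\mu) = \int_{E\times F} \int_R \phi_\varepsilon(W(t) - y)\, \phi_\varepsilon(X(u) - y)\, du\, \mu(dt\, dy)
\]
and apply the Cairoli maximal inequality from Lemma \ref{lemma6.2} to the commuting filtration $\{\mathscr{H}^\pi_{s, u}\}$. On $G$ we may choose the $s_m, u_m$ converging to $(\tau, \sigma)$ in $(\Q^N \cap [a,b]^N) \times (\Q^n \cap R)$ and, using the pinned representation $W(t) = W_s(t) + C_{s,t}W(s)$ jointly with the additive-process decomposition $X(u) = (X(u) - X(v)) + X(v)$, obtain a lower bound for $\E[\sup_{(s,u)}\E[Z_\varepsilon(\mu) \mid \mathscr H^\pi_{s,u}]^2]$ of the form $c\, \P(G) \cdot [\iint (\phi_\varepsilon\ast q_{s,t})(x-y)(\phi_\varepsilon\ast\kappa)(x-y)\mu^{\otimes 2}]^2$. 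The matching upper bound, via the $L^2$ estimate on $Z_\varepsilon(\mu)^2$ and the Gaussian/stable density comparisons, is of the form $c\, \iint(\phi_\varepsilon \ast r_{s,t})(x-y)(\phi_\varepsilon\ast\kappa)(x-y)\mu^{\otimes 2}$. Apply Lemma \ref{lemma6.3} to replace both $q_{s,t}$ and $r_{s,t}$ by $p_{s,t}$, sum over $\pi$, and simplify; the resulting inequality forces
\[
\mathscr{E}_{d-\alpha n}(\mu) = \iint p_{s,t}(x-y)\,\kappa(x-y)\,\mu(ds\,dx)\,\mu(dt\,dy) \le c\, \P(G)^{-1} < \infty,
\]
so $C_{d - \alpha n}(E \times F) > 0$.

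The main obstacle is the upper bound: threading the extra Riesz factor $\kappa$ coming from $X$ through the pinned-sheet / Cairoli argument without breaking the commuting structure. This is exactly what Lemma \ref{lemma6.3} is designed for, but one still needs to align the decomposition of $W(t)$ via the pinned sheet with the independent-increment decomposition of the additive stable process on the "$X$ side", handle the dependence of the $X$-densities on $\|X(v)\|$ uniformly over a bounded set, and perform a case analysis on $\|w + y - C_{s,t}x\|$ vs.\ $\|t - s\|^{1/2}$ analogous to the passage from $r_{s,t}$ to $p_{s,t}$ in Part 2 of the proof of Theorem \ref{T:hit}. Once these pieces are in place, the rest of the proof is essentially a bookkeeping exercise parallel to that of Theorem \ref{T:hit}.
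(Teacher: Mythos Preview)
Your approach matches the paper's: Paley--Zygmund for $(\Leftarrow)$, Cairoli on the joint commuting filtration $\{\mathscr H^\pi_{s,u}\}$ for $(\Rightarrow)$, with Lemma~\ref{lemma6.3} handling the extra Riesz factor. Two points in your sketch would fail as written, however.

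For $(\Leftarrow)$, the claim ``compactness of $E\times F\times R$ forces $W(E)\cap F\cap X(R)\ne\varnothing$'' is not correct: $X$ is only c\`adl\`ag, so a limit of points $X(u_k)$ with $u_k\to u_0$ need not be $X(u_0)$, and letting $\varepsilon\downarrow 0$ only gives $W(E)\cap F\cap\overline{X(R)}\ne\varnothing$. The paper repairs this via \cite[Lemma 4.1]{KX2005}: intersection with the closure forces the difference set $(W(E)\cap F)\ominus\overline{X(R)}$ to have positive Lebesgue measure, hence so does $(W(E)\cap F)\ominus X(R)$ (only countably many jumps), and the lemma then removes the closure.

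For $(\Rightarrow)$, the paper does \emph{not} define a hitting pair $(\tau,\sigma)$; it only constructs the $E$-valued point $S$ lexicographically, with $W(S)\in F\cap\overline{X(G)}$, and uses this inclusion to pick rational $u_m$ with $X(u_m)\to W(S)$. Crucially, it integrates $Z_\varepsilon(\mu)$ over an enlarged, displaced region $G'=[0,2M]^n\setminus[0,M)^n$ rather than the original hitting region $G=[0,M]^n\setminus[0,m)^n$. This displacement guarantees that, uniformly over the conditioning points $u\in[0,M]^n$, the set $\{v\in G':v\succcurlyeq u\}$ is large enough that $\int_{v\in G',\,v\succcurlyeq u}g_{v-u}(z)\,dv\ge\kappa(z)$. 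If you integrate over the same $R$ that witnesses the hitting, the approximants $u_m$ may sit near the upper boundary of $R$, leaving no room for $v\succcurlyeq u_m$, and the conditional-expectation lower bound collapses.
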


\begin{proof}
Define $p_{s, t}$ as in \eqref{E:pst}.
Let $g_0 = \delta_0$.
For $t \in \mathbb{R}^n_+ \setminus \{0\}$, let $g_t$ be the density of $X(t)$, namely,
\begin{align}
g_t(x) = (2\pi)^{-d} \int_{\R^d} e^{-i\langle x, \xi \rangle} e^{-\|t\|_1 \|\xi\|_2^\alpha /2} \, d\xi, \quad x \in \mathbb{R}^d,
\end{align}
where $\|t\|_1= \sum_{j=1}^n |t_j|$ for $t \in \mathbb{R}^n$. We extend the definition of $g_t$ for $t \in \mathbb{R}^n \setminus \{0\}$ by setting
\begin{align}\label{g:scaling}
g_t(x) = \|t\|_1^{-d/\alpha} g_{(1, \dots, 1)}(\|t\|_1^{-1/\alpha} x), \quad x \in \mathbb{R}^d.
\end{align}
Since $E \subset (0, \infty)^N$ and $F \subset \mathbb{R}^d$ are compact, we can find constants $0 < a < 1$ and $1 < b < \infty$ 
such that $s \in [a, b]^N$ for all $s \in E$, and $\|x\| < b$ for all $x \in F$.

\medskip

$(\Leftarrow)$. Suppose $C_{d - \alpha n}(E \times F) > 0$. Then $\mathscr{E}_{d - \alpha n}(\mu) < \infty$ for some $\mu\in P_0(E\times F)$.
Let $\varepsilon \in (0, 1)$, $f_{\varepsilon} = |B(0, \varepsilon)|^{-1} {\bf 1}_{B(0, \varepsilon)}$, $\phi_\varepsilon = f_{\varepsilon} \ast f_{\varepsilon}$ and
\begin{align}
 Z_{\varepsilon}(\mu) &= \int_{[1,2]^n} du \int_{E\times F} \phi_\varepsilon(W(s) - x) \, \phi_{\varepsilon}(X(u) - x) \,\mu(ds\,dx).
\end{align}
By the Cauchy--Schwarz inequality,
\begin{align}
\label{6.9}
\P(Z_\varepsilon(\mu) > 0) \geq \frac{ \E[Z_\varepsilon(\mu)]^2}{\E[Z_\varepsilon(\mu)^2]}.
\end{align}
Note that
\begin{align}
\begin{aligned}
\label{6.10}
\E[Z_\varepsilon(\mu)]
& = \int_{[1,2]^{n}} du \int \mu(ds\,dx) \, \E[\phi_\varepsilon(W(s) - x)] \E[\phi_\varepsilon(X(u) - x)]\\
& = \int_{[1,2]^n} du \int \mu(ds\,dx) \left[ \int_{\|w\| \leq b + 2} \phi_\varepsilon(w - x) \frac{1}{(2\pi \prod_{i=1}^N s_i)^{d/2}}\, e^{-\frac{\|w\|^2}{2\prod_{i=1}^N s_i}} 
\,dw \right]\\
& \hspace{105pt} \times \left[\int_{\|z\| \leq b + 2} \phi_\varepsilon(z - x) g_u(z) \, dz \right]\\
& \geq \int_{[1,2]^n} du \int \mu(ds\,dx) \left[ (2\pi b^N)^{-d/2}\,e^{-\frac{(b+2)^2}{2a^N}} \int_{\|w\|\le b+2}  \phi_\varepsilon(w - x) \, dw \right] \\
& \hspace{105pt} \times
\left[ \inf_{u \in [1,2]^n, \|v\| \leq b + 2}g_u(v) \int_{\|z\| \le b+2} \phi_\varepsilon(z - x) \, dz\right]\\
& = c_{\ref{6.10}} > 0.
\end{aligned}
\end{align}
To obtain the last line, we have used the fact that
\[
	\inf_{u \in [1,2]^n, \|v\| \leq b + 2}g_u(v) > 0,
\]
which follows from the scaling property \eqref{g:scaling}, the property that $g_{(1,\dots,1)}(a) > 0$ for all $a \in \R^n$ (see \cite[Corollary 3.2.1, p.379]{K}) and continuity.
Next, we establish an upper bound for
\begin{align}
\begin{aligned}\label{6.11}
\quad \E[Z_\varepsilon(\mu)^2]
& = \iint_{[1,2]^{2n}} du\, dv \iint \E[\phi_\varepsilon(W(s) - x) \phi_\varepsilon(W(t) - y)] \\
& \hspace{85pt} \times \E[\phi_\varepsilon(X(u) - x) \phi_\varepsilon(X(v) - y)] \,\mu(ds\,dx)\,\mu(dt\,dy).
\end{aligned}
\end{align}
Let $Z_1 = X(u) - X(u \wedge v)$ and $Z_2 = X(v) - X(u \wedge v)$, where $u \wedge v = (u_1 \wedge v_1, \dots, u_n \wedge v_n)$. 
Then, by \eqref{E:phi_eps:bd} and triangle inequality,
\begin{align}
\begin{aligned}
\label{6.12}
& \quad \E[\phi_\varepsilon(X(u) - x) \phi_\varepsilon(X(v) - y)]\\
& \leq 4^d \E[f_{2\varepsilon}(X(u) - x) f_{2\varepsilon}(X(v) - y)]\\
& \leq c_{\ref{6.12}}\,\varepsilon^{-d} \E[{\bf 1}_{B(0, 2\varepsilon)}(X(u) - x) f_{4\varepsilon}(X(v) - X(u) +x - y)]\\
& = c_{\ref{6.12}} \, \varepsilon^{-d} \E[P(|Z_1 + X(u\wedge v) - x| \leq 2 \varepsilon |Z_1, Z_2) f_{4\varepsilon}(Z_2 - Z_1 + x - y)].
\end{aligned}
\end{align}
Since $X(u \wedge v)$ is unimodal (see \cite[Remark 2.3]{KX2002}) and $Z_1, Z_2$, $X(u \wedge v)$ are independent, we have
\begin{align}
\label{6.13}
\P(|Z_1 + X(u\wedge v) - x| \leq 2 \varepsilon |Z_1, Z_2) \leq \P(|X(u \wedge v)| \leq 2 \varepsilon) \leq c_{\ref{6.13}}\, \varepsilon^d.
\end{align}
The preceding together with \eqref{E:phi_eps:bd} implies that
\begin{align}
\label{6.14}
\E[\phi_\varepsilon(X(u) - x) \phi_\varepsilon(X(v) - y)] \leq c_{\ref{6.14}} (\phi_{8\varepsilon} \ast g_{u - v})(x - y).
\end{align}
From \eqref{7.9} -- \eqref{5.17} of the proof of Theorem \ref{T:hit}, we have
\begin{align}
\label{6.15}
\E[\phi_{\varepsilon}(W(s) - x) \phi_{\varepsilon}(W(t) - y)] \leq c_{\ref{6.15}} (\phi_{8\varepsilon} \ast r_{s, t})(x - y),
\end{align}
where
\begin{align}
\label{6.16}
r_{s, t}(w) = \frac{1}{\|t - s\|^{d/2}} \exp\left(-\frac{\|w\|^2}{2c_{\ref{6.16}}\|t - s\|}\right), \quad w \in \mathbb{R}^d,
\end{align}
and $c_{\ref{6.16}} > 1$ is a constant.
Applying \eqref{6.14} and \eqref{6.15} to \eqref{6.11} yields
\begin{align}
\begin{aligned}
\label{6.17}
& \E[Z_\varepsilon(\mu)^2]\\
& \leq c_{\ref{6.17}} \iint_{[1,2]^{2n}} du\, dv\iint (\phi_{8\varepsilon} \ast r_{s, t})(x - y)\, 
(\phi_{8\varepsilon} \ast g_{u - v})(x - y) \,\mu(ds\,dx)\, \mu(dt\,dy).
\end{aligned}
\end{align}
By the change of variables $u' = u$, $v'= u - v$ and the symmetry of $g$, we have
\begin{align}
\begin{aligned}
\label{6.18}
\int_{[1,2]^n} du \int_{[1,2]^n} dv \, g_{u - v}(z) & = \int_{[1,2]^n} du' \int_{[1,2]^n - u'} dv' \, g_{v'}(z) \\
& \leq \int_{[1,2]^n} du' \int_{[-1,1]^n} dv' \, g_{v'}(z)\\
& = 2^n \int_{[0,1]^n} g_u(z) \, du := 2^n \,\kappa(z).
\end{aligned}
\end{align}
Thus,
\begin{align}
\label{6.19}
\E[Z_\varepsilon(\mu)^2] \leq c_{\ref{6.19}} \iint (\phi_{8\varepsilon} \ast r_{s, t})(x - y)\, (\phi_{8\varepsilon} \ast \kappa)(x - y) \,
\mu(ds\,dx)\, \mu(dt\,dy).
\end{align}
Note that $\kappa$ is integrable, positive definite and lower semicontinuous, and $\phi_{8\varepsilon} \ast \kappa$ is continuous,
 so we can apply Lemma \ref{lemma6.3} to get
\begin{align} 
\begin{aligned}\label{6.20}
\E[Z_\varepsilon(\mu)^2] & \leq c_{\ref{6.20}} \iint (\phi_{8\varepsilon} \ast p_{s, t})(x - y) \, (\phi_{8\varepsilon} \ast \kappa)(x - y)\, 
\mu(ds\,dx)\, \mu(dt\,dy).
\end{aligned}
\end{align}
Putting \eqref{6.9}, \eqref{6.10} and \eqref{6.20} together, we have
\begin{align}
\begin{aligned} 
\label{6.21}
& \P(W(E) \cap F^{8\varepsilon} \cap X([1,2]^n) \neq \varnothing) \geq \P(Z_\varepsilon(\mu) > 0)\\
& \geq c_{\ref{6.21}} \left[\iint (\phi_{8\varepsilon} \ast p_{s, t})(x - y) \, (\phi_{8\varepsilon} \ast \kappa)(x - y)\,
\mu(ds\,dx)\, \mu(dt\,dy)\right]^{-1}.
\end{aligned}
\end{align}
Let $(\varepsilon_m)_{m=1}^\infty$ be a sequence in $(0, 1)$ such that $\varepsilon_m \downarrow 0$.
Since $\{ W(E) \cap F \cap \overline{X([1,2]^n)} \neq \varnothing \}$ contains the intersection $\bigcap_{m=1}^\infty \{ W(E) \cap F^{8\varepsilon_m} \cap X([1,2]^n) \neq \varnothing \}$, we have
\begin{align}
\begin{aligned} 
\label{6.22}
&\P(W(E) \cap F \cap \overline{X([1,2]^n)} \neq \varnothing) \\
& \geq \lim_{m \to \infty} \P(W(E) \cap F^{8\varepsilon_m} \cap X([1,2]^n) \neq \varnothing) \\
& \geq c_{\ref{6.21}} \left[ \iint p_{s, t}(x - y) \, \kappa(x - y) \, \mu(ds\, dx) \, \mu(dt\, dy) \right]^{-1}\\
& \geq c_{\ref{6.22}} \,\mathscr{E}_{d - \alpha n}(\mu)^{-1}.
\end{aligned}
\end{align}
The last inequality follows from the fact that there are constants $c_{\ref{6.23}}$, $C_{\ref{6.23}} > 0$ such that
\begin{align}\label{6.23}
\frac{c_{\ref{6.23}}}{\|z\|^{d - \alpha n}} \leq \kappa(z) \leq \frac{C_{\ref{6.23}}}{\|z\|^{d - \alpha n}}
\end{align}
for all $z \in \mathbb{R}^d$ with $\|z\| \leq 2b$ (see \cite[Proposition 4.6]{KX2015}).
Since $\mathscr{E}_{d - \alpha n}(\mu) < \infty$, we have
\begin{align}
\P(W(E) \cap F \cap \overline{X([1,2]^n)} \neq \varnothing) > 0. 
\end{align}
From the proof of Lemma 4.1 in \cite{KX2005}, this implies that the set $(W(E) \cap F) \ominus \overline{X([1,2]^n)}$ has 
positive Lebesgue measure with positive probability,
where $A \ominus B$ denotes the set $\{ a - b : a \in A, b \in B\}$.
Since $X$ has only countably many jumps, the same is true for $(W(E) \cap F) \ominus X([1,2]^n)$.
Then by Lemma 4.1 in \cite{KX2005}, the set $W(E) \cap F \cap X([1,2]^n)$ is nonempty with positive probability, hence 
so is $W(E) \cap F \cap X(\mathbb{R}^n_+\setminus \{0\})$.

\medskip

($\Rightarrow$). Suppose $\P(W(E) \cap F \cap X(\mathbb{R}^n_+\setminus \{0\}) \neq \varnothing) > 0$. 
Then we can find rational numbers $0 < m < M$ such that $\P(W(E) \cap F \cap X(G) \neq \varnothing) > 0$, where 
$G = [0, M]^n\setminus [0, m)^n$.
Fix a point $\Delta \in \mathbb{R}^N_+ \setminus E$.
We define an $E \cup \{\Delta\}$-valued random variable $S$ as follows. Fix any $\omega \in \Omega$.

(i). If there is no $s \in E$ such that $W(s) \in F \cap X(G)$, define $S = \Delta$.

(ii). If there exists $s \in E$ such that $W(s) \in F \cap X(G)$, define $S = (S_1, \dots, S_N)$ inductively as follows:
\begin{align}
\begin{aligned}
S_1 &= \inf A_1= \inf\{ s_1 : \exists \, s_2, \dots, s_N, \exists \, x \in \overline{X(G)} \text{ such that }\\
& \hspace{150pt} (s_1, \dots, s_N) \in E, W(s_1, \dots, s_N) = x \in F\},\\
S_2 & = \inf A_2 = \inf\{ s_2 : \exists \, s_3, \dots, s_N, \exists \, x \in \overline{X(G)} \text{ such that }\\
& \hspace{120pt} (S_1, s_2, \dots, s_N) \in E, W(S_1, s_2, \dots, s_N) = x \in F\},\\
\vdots\\
S_N & = \inf A_N = \inf\{ s_N : \exists \, x \in \overline{X(G)} \text{ such that }\\
& \hspace{100pt} (S_1,  \dots, S_{N-1}, s_N) \in E, W(S_1 \dots, S_{N-1}, s_N) = x \in F\}.
\end{aligned}
\end{align}
\normalsize
Let us verify that $S_1, \dots, S_N$ are well-defined.
First, (ii) implies $A_1 \neq \varnothing$. In particular, there are sequences $(s^k)_{k=1}^\infty$ in $E$ and $(x^k)_{k=1}^\infty$ in $\overline{X(G)}$ such that $s_1^k \downarrow S_1$ and $W(s^k) = x^k \in F$.
Since $E$ is compact, we can find a subsequence $(s^{k_j})_{j=1}^\infty$ so that $s^k \to (S_1, s_2, \dots, s_N) \in E$ and $x^{k_j} = W(s^{k_j}) \to W(S_1, s_2, \dots, s_N) := x \in F$. Since $\overline{X(G)}$ is closed, $x$ is also in $\overline{X(G)}$ and this implies 
$A_2 \neq \varnothing$.
By induction, we can show that $A_m \neq \varnothing$ for all $m$, $S \in E$ and $W(S) \in F \cap \overline{X(G)}$.
Hence $S$ is well-defined.

Using the compactness of $E$, $F$ and $G$, continuity of $W$, and right-continuity of $X$, it can be shown that $S$ is measurable.
Since $\P(S \neq \Delta) = \P(W(E) \cap F \cap X(G) \neq \varnothing) > 0$ and $(S, W(S))$ is measurable, we can define a 
Borel probability measure $\mu$ on $E \times F$ by
\begin{align}
\mu(B) = \P((S, W(S)) \in B \mid S \neq \Delta)
\end{align}
for any Borel subset $B$ of $E \times F$.
Since $F$ has Lebesgue measure 0, $\mu(\{s\} \times F) = 0$ for all $s \in E$.

We aim to show that $\mathscr{E}_{d - \alpha n}(\mu) < \infty$.
For each $\varepsilon > 0$, let 
\begin{align}\label{6.34}
\phi_\varepsilon(x) &= \frac{1}{(2\pi\varepsilon^2)^{d/2}} \exp\left(-\frac{\|x\|^2}{2\varepsilon^2}\right), \quad x \in \mathbb{R}^d,\\
Z_\varepsilon(\mu) &= \int_{G'} dv \int_{E \times F} \mu(dt\, dy)\, \phi_\varepsilon(W(t) - y) \,\phi_\varepsilon(X(v) - y),
\end{align}
where $G' = [0, 2M]^n \setminus [0, M)^n$.
Fix any $\pi \subset \{1, \dots, N\}$.
For any $s \in \mathbb{R}^N_+$ and $u \in \mathbb{R}^n_+$, define $\mathscr{F}^\pi_s = \sigma\{W(t) : t \preccurlyeq_\pi s\}$ and $\mathscr{G}_u = \bigvee_{i=1}^n \mathscr{G}^{(i)}_{u_i}$, where $\mathscr{G}^{(i)}_{u_i} = \sigma\{X^{(i)}(t) : 0 \leq t \leq u_i\}$. 
Let $\mathscr{H}^\pi_{s, u} = \mathscr{F}^\pi_s \vee \mathscr{G}_u$. 
Fix $(s, u) \in (\mathbb{Q}_+ \cap [a, b])^N \times (\mathbb{Q}_+ \cap [0, M])^n$.
By the independence of $W$ and the $X^{(i)}$'s,
\begin{align}\label{6.32}
\notag
&\E[Z_\varepsilon(\mu) | \mathscr{H}^\pi_{s, u}] \\
& = \int_{G'} dv \int \mu(dt\,dy) \, \E[\phi_\varepsilon(W(t) - y)|\mathscr{F}^\pi_s] \, \E[\phi_\varepsilon(X(v) - y)|\mathscr{G}_u]\\
& \geq \int_{\{v \in G' :\, v \succcurlyeq u\}} dv \int_{\substack{t \succcurlyeq_\pi s \\ t \neq s}} \mu(dt\,dy) \, \E[\phi_\varepsilon(W(t) - y)|\mathscr{F}^\pi_s]\, \E[\phi_\varepsilon(X(v) - y)|\mathscr{G}_u]\, {\bf 1}_{\{\|W(s)\| < b\}}.
\notag
\end{align}
From \eqref{5.18} -- \eqref{5.22} in the proof of Theorem \ref{T:hit}, we see that on $\{\|W(s)\| < b\}$,
\begin{align} \label{6.33}
\E[\phi_\varepsilon(W(t) - y)|\mathscr{F}^\pi_s] \geq c_{\ref{6.33}}\, (\phi_\varepsilon \ast q_{s, t})(W(s) - y),
\end{align}
where
\begin{align}
\label{6.35}
q_{s, t}(w) = \frac{1}{\|t - s\|^{d/2}} \exp\left(-\frac{\|w\|^2}{2c_{\ref{6.35}}\|t - s\|} \right), \quad w \in \mathbb{R}^d
\end{align}
and $0 < c_{\ref{6.35}} < 1$ is a constant. Also, for all $v \succcurlyeq u$,
\begin{align}
\E[\phi_\varepsilon(X(v) - y)|\mathscr{G}_u] = (\phi_\varepsilon \ast g_{v - u})(X(u) - y).
\end{align}
It follows that
\begin{align}
\begin{aligned} \label{6.36}  
&\E[Z_\varepsilon(\mu)|\mathscr{H}_{s, u}^\pi]\\
& \geq c_{\ref{6.36}} \int_{\substack{t \succcurlyeq_\pi s \\ t \neq s}} (\phi_{\varepsilon} 
\ast q_{s, t})(W(s) - y)\, (\phi_\varepsilon \ast {\kappa})(X(u) - y) \, \mu(dt\,dy)\,{\bf 1}_{\{\|W(s)\| < b\}},
\end{aligned}
\end{align}
where
\begin{align}
\kappa(z) := \int_{[0, M]^n} g_v(z) dv \leq \int_{\{ v \in G' :\, v \succcurlyeq u\}} g_{v - u}(z) \, dv.
\end{align}
For any fixed $\omega \in \{S \neq \Delta\}$, since $W(S) \in \overline{X(G)}$, we can find a sequence $\{(s_m, u_m)\}_{m=1}^\infty$ 
in $(\mathbb{Q}_+ \cap [a, b])^N \times (\mathbb{Q}_+ \cap [0, M])^n$ such that $s_m \to S$ and $X(u_m) 
\to W(S)$. Since $W(S) \in F$, we have $\|W(S)\| < b$ and $\|W(s_m)\| < b$ for large $m$. 
Note that $(s, x) \mapsto (\phi_\varepsilon \ast q_{s, t})(x - y) (\phi_{\varepsilon} \ast \kappa)(x - y)$ is continuous.
Applying \eqref{6.36} and using Fatou's lemma, we have
\begin{align}
\notag
& \sup_{(s, u) \in \mathbb{Q}^N_+ \times \mathbb{Q}^n_+}  \E[Z_\varepsilon(\mu)| \mathscr{H}^\pi_{s, u}] \\
\notag
& \geq c_{\ref{6.36}} \liminf_{m\to\infty} \int_{\substack{t \succcurlyeq_\pi s_m\\t \neq s_m}} (\phi_\varepsilon \ast q_{s_m, t})(W(s_m) - y) \, (\phi_\varepsilon \ast \kappa)(X(u_m) - y) \, \mu(dt \, dy)\, {\bf 1}_{\{\|W(s_m)\| \leq b\}}\\
& \geq c_{\ref{6.36}} \int_{\substack{t \succcurlyeq_\pi S\\ t \neq S}} (\phi_{\varepsilon} \ast q_{S, t})(W(S) - y)\, (\phi_\varepsilon \ast {\kappa})(W(S) - y) \, \mu(dt\,dy) \, {\bf 1}_{\{S \neq \Delta\}}.
\end{align}
The above still holds for $\omega \notin \{S = \Delta\}$.
Taking square and expectation, and then using the Cauchy--Schwarz inequality, we get that
\begin{align}
\begin{aligned}\label{6.39}
& \E \Bigg( \sup_{(s, u) \in \mathbb{Q}^N_+ \times \mathbb{Q}^n_+}  \E[Z_\varepsilon(\mu)| \mathscr{H}^\pi_{s, u}]^2 \Bigg)\\
& \geq c_{\ref{6.36}}^2 P(S \neq \Delta) \int \left(\int_{t \succcurlyeq_\pi s} (\phi_{\varepsilon} \ast q_{s, t})(x - y)\, (\phi_\varepsilon 
\ast {\kappa})(x - y) \, \mu(dt\,dy) \right)^2 \mu(ds\, dx)\\
& \geq c_{\ref{6.36}}^2 \P(S \neq \Delta) \left( \iint_{t \succcurlyeq_\pi s} (\phi_{\varepsilon} \ast q_{s, t})(x - y)\, (\phi_\varepsilon 
\ast {\kappa})(x - y) \, \mu(dt\,dy) \, \mu(ds\, dx) \right)^2\\
& := c_{\ref{6.39}} \P(S \neq \Delta) J_{\varepsilon, \pi}^2.\\
\end{aligned}
\end{align}
Next, we find an upper bound for the expectation in \eqref{6.39}.
By Cairoli's maximal inequality (Lemma \ref{lemma6.2}), we have
\begin{align}
\begin{aligned}
& \E \Bigg( \sup_{(s, u) \in \mathbb{Q}^N_+ \times \mathbb{Q}^n_+} \E[Z_\varepsilon(\mu)| \mathscr{H}^\pi_{s, u}]^2 \Bigg)
 \leq 4^{N+n} \E[Z_\varepsilon(\mu)^2]\\
& = 4^{N+n} \iint_{G'\times G'} du\, dv \iint \mu(ds\,dx)\, \mu(dt\,dy)\, \E[\phi_\varepsilon(W(s) - x)\phi_\varepsilon(W(t) - y)]\\
& \hspace{195pt} \times \E[\phi_\varepsilon(X(u) - x)\phi_\varepsilon(X(v) - y)].
\end{aligned}
\end{align}
From \eqref{7.37} -- \eqref{7.50} in the proof of Theorem \ref{T:hit}, we see that for $(s, x), (t, y) \in E \times F$,
\begin{align}
\begin{aligned} \label{6.41}
\E[\phi_\varepsilon(W(s) - x)\phi_\varepsilon(W(t) - y)]
& \leq c_{\ref{6.41}} \E[\phi_{2\varepsilon}(W(s) - x)\phi_{2\varepsilon}(W_s(t) + C_{s, t} x - y)]\\
& \leq \tilde{c}_{\ref{6.41}} (\phi_{2\varepsilon} \ast r_{s, t})(x - y),
\end{aligned}
\end{align}
where
\begin{align}\label{6.42}
r_{s, t}(w) = \frac{1}{\|t - s\|^{d/2}} \exp\left(-\frac{\|w\|^2}{2c_{\ref{6.42}}\|t - s\|} \right), \quad w \in \mathbb{R}^d,
\end{align}
and $c_{\ref{6.42}} > 1$ is a constant. By a similar argument, we can show that for $u, v \in G'$,
\begin{align}
\begin{aligned}\label{6.43}
\E[\phi_\varepsilon(X(u) - x)\phi_\varepsilon(X(v) - y)] \leq c_{\ref{6.43}} \E[\phi_{2\varepsilon}(X(u) - x) 
\phi_{2\varepsilon}(X(v) - X(u) + x - y)].
\end{aligned}
\end{align}
Write $Z_1 = X(u) - X(u \wedge v)$ and $Z_2 = X(v) - X(u \wedge v)$. Then
\begin{align}
\begin{aligned}\label{E:4.40}
& \quad \E[\phi_\varepsilon(X(u) - x)\phi_\varepsilon(X(v) - y)]\\
& \leq c_{\ref{6.43}} \E[ \E(\phi_{2\varepsilon}(Z_1 + X(u \wedge v) - x)|Z_1, Z_2)\,\cdot  \phi_{2\varepsilon}(Z_2 - Z_1 + x - y)]\\
& = c_{\ref{6.43}} \E\left[ \int \phi_{2\varepsilon}(Z_1 + z - x) \, g_{u \wedge v}(z) \, dz\, \cdot \phi_{2\varepsilon}(Z_2 - Z_1 + x - y)\right]\\
& \leq c_{\ref{6.43}} \E\left[ \sup_{u, v \in G'}g_{u \wedge v}(0) \int \phi_{2\varepsilon}(Z_1 + z - x) \, dz \cdot \phi_{2\varepsilon}(Z_2 - Z_1 
+ x - y) \right]\\
& \leq {c}_{\ref{E:4.40}} \E[\phi_{2\varepsilon}(Z_2 - Z_1 + x - y)]\\
& = {c}_{\ref{E:4.40}} (\phi_{2\varepsilon} \ast g_{v - u})(x - y).
\end{aligned}
\end{align}
Note that $\displaystyle{\sup_{u, v \in G'} g_{u \wedge v}(0)}$ is a positive finite constant, thus so is $\tilde{c}_{\ref{6.43}}$.
Now, we have
\begin{align}
\begin{aligned}\label{6.45}
& \E \Bigg( \sup_{(s, u) \in \mathbb{Q}^N_+ \times \mathbb{Q}^n_+} \E[Z_\varepsilon(\mu)| \mathscr{H}^\pi_{s, u}]^2 \Bigg)\\
& \leq c_{\ref{6.45}} \iint_{G' \times G'} du\, dv \iint (\phi_{2\varepsilon} \ast r_{s, t})(x - y) \, (\phi_{2\varepsilon} \ast g_{v - u})(x - y) 
\, \mu(ds\,dx) \, \mu(dt\,dy).
\end{aligned}
\end{align}
By a change of variables and the symmetry of $g$,
\begin{align}
\begin{aligned}
\iint_{G' \times G'} g_{v - u}(z) \, du \, dv & \leq \int_{[0, 2M]^n} du \int_{[0, 2M]^n} dv\,  g_{v - u}(z)\\
& = \int_{[0, 2M]^n} du' \int_{[0, 2M]^n - u'} dv' \, g_{v'}(z)\\
& \leq (2M)^n \int_{[-2M, 2M]^n} g_{v'}(z) \, dv'\\
& = (2M)^n 2^n \int_{[0, 2M]^n} g_u(z) \, du.\\
\end{aligned}
\end{align}
Then by Lemma 4.7 of \cite{KX2015}, the above is
\begin{align}\label{6.47}
\leq c_{\ref{6.47}} \int_{[0, M]^n} g_u(z) \, du = c_{\ref{6.47}}\, \kappa(z)
\end{align}
for some constant $c_{\ref{6.47}} > 0$.
Using this and Lemma \ref{lemma6.3}, we get
\begin{align}
\begin{aligned}\label{6.48}
& \E \Bigg( \sup_{(s, u) \in \mathbb{Q}^N_+ \times \mathbb{Q}^n_+}  \E[Z_\varepsilon(\mu)| \mathscr{H}^\pi_{s, u}]^2 \Bigg)\\
& \leq c_{\ref{6.45}}\, c_{\ref{6.47}} \iint (\phi_{2\varepsilon} \ast r_{s, t})(x - y) \, (\phi_{2\varepsilon} \ast {\kappa})(x - y) \, 
\mu(ds\,dx) \, \mu(dt\,dy)\\
& \leq c_{\ref{6.48}} \iint (\phi_{\varepsilon} \ast p_{s, t})(x - y) \, (\phi_{\varepsilon} \ast {\kappa})(x - y) \, \mu(ds\,dx) \, \mu(dt\,dy) 
:= c_{\ref{6.48}} \, I_\varepsilon.
\end{aligned}
\end{align}
Combining \eqref{6.39} and \eqref{6.48}, and then summing over all $\pi \subset \{1, \dots, N\}$ yields
\begin{align}
\begin{aligned}\label{6.49}
&2^N c_{\ref{6.48}} \, I_\varepsilon 
\geq c_{\ref{6.39}} \,\P(S \neq \Delta) \sum_\pi J_{\varepsilon, \pi}^2\\
& \geq 2^{-N} c_{\ref{6.39}}\, \P(S \neq \Delta) \left( \sum_{\pi} J_{\varepsilon, \pi} \right)^2\\
& = 2^{-N} c_{\ref{6.39}} \, \P(S \neq \Delta) \left( \iint (\phi_\varepsilon \ast q_{s, t})(x-y) \, (\phi_\varepsilon \ast \kappa)(x-y) \,
 \mu(ds\, dx) \, \mu(dt\, dy) \right)^2\\
& \geq c_{\ref{6.49}} \, \P(S \neq \Delta) \, I_\varepsilon^2,
\end{aligned}
\end{align}
where Lemma \ref{lemma6.3} is used in the last inequality.
So we have
\begin{align}
\begin{aligned}\label{6.50}
c_{\ref{6.50}}\, I_\varepsilon \geq \P(S \neq \Delta) I_\varepsilon^2
\end{aligned}
\end{align}
for some constant $c_{\ref{6.50}} > 0$ independent of $\varepsilon$.
Note that $I_\varepsilon$ is finite for each $\varepsilon > 0$, so
\begin{align}
 I_\varepsilon = \iint (\phi_{\varepsilon} \ast p_{s, t})(x - y) \, (\phi_{\varepsilon} \ast {\kappa})(x - y) \, \mu(ds\,dx) \, \mu(dt\,dy) 
 \leq c_{\ref{6.50}} \, \P(S \neq \Delta)^{-1}.
\end{align}
Letting $\varepsilon \to 0$ gives
\begin{align}
\iint p_{s, t}(x - y) \,\kappa(x - y) \, \mu(ds\,dx) \, \mu(dt\,dy)< \infty.
\end{align}
By Proposition 4.6 of \cite{KX2015}, we have $\mathscr{E}_{d - \alpha n}(\mu) < \infty$.
The proof is complete.
\end{proof}

\section{Hausdorff dimension of $W(E)\cap F$ and proofs of Theorems \ref{T:dim:cap} and \ref{T:dim:dim-d}}

In this section, we study the essential supremum of the Hausdorff dimension of $W(E)\cap F$ and prove Theorems \ref{T:dim:cap} and \ref{T:dim:dim-d}.

\begin{proposition}\label{prop4.5}
If $E \subset (0, \infty)^N$, $F \subset \mathbb{R}^d$ are compact sets and $F$ has positive Lebesgue measure, then
\[ 
\|\dimh (W(E) \cap F)\|_\infty = \min\{d, 2\dimh  E \}. 
\]
\end{proposition}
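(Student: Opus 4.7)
The plan is to establish matching upper and lower bounds for $\|\dimh(W(E) \cap F)\|_\infty$.

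For the upper bound, I would invoke the classical identity $\dimh W(E) = \min\{d, 2\dimh E\}$ almost surely, valid for any deterministic compact set $E \subset (0,\infty)^N$. Since $W(E) \cap F \subset W(E)$, monotonicity of Hausdorff dimension gives $\dimh(W(E) \cap F) \le \min\{d, 2\dimh E\}$ a.s., and hence the same bound on the essential supremum.

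For the lower bound, fix $\alpha < \dimh E$ and $\gamma < \min\{d, 2\alpha\}$. By Frostman's lemma there exist a compact $E' \subset E$ and a Borel probability measure $\sigma$ on $E'$ with $\sigma(B(t,r)) \le C r^\alpha$ for all $t$ and $r>0$. I would then introduce the random image measure $\mu$ on $\R^d$ defined by $\mu(A) = \sigma\{t \in E' : W(t) \in A\}$, which is supported on $W(E')$. Since $E' \subset [a,b]^N$ with $0<a<b<\infty$ and $F$ is compact with positive Lebesgue measure, the Gaussian density of $W(t)$ is uniformly bounded below on $F$ for $t \in E'$, so
\[
\E[\mu(F)] = \int_{E'} \P(W(t) \in F)\,\sigma(dt) \ge c_0 > 0.
\]
Because $\mu(F) \le 1$, this forces $\P(\mu(F) > 0) > 0$.

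Next I would bound the expected $\gamma$-Riesz energy of the restriction $\mu|_F$. Using the decomposition $W(t) - W(s) = W_s(t) + (C_{s,t}-1)W(s)$ from \eqref{pinned:BS} with $W_s(t)$ independent of $W(s)$, conditionally on $W(s)$ the increment $W(t)-W(s)$ is Gaussian with covariance $\sigma_{s,t}^2 I_d$, and by \eqref{5.9} we have $\sigma_{s,t}^2 \ge c_{\ref{5.9}}\|s-t\|$ for $s, t \in [a,b]^N$. For $\gamma < d$, the standard Gaussian negative-moment bound gives $\E[\|W(s)-W(t)\|^{-\gamma} \mid W(s)] \le C\sigma_{s,t}^{-\gamma} \le C'\|s-t\|^{-\gamma/2}$ uniformly in the conditioning value. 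Dropping one of the two indicators $\mathbf{1}_{\{W(\cdot) \in F\}}$ and using that $\P(W(s) \in F)$ is uniformly bounded, this yields
\[
\E\!\left[\iint_{F\times F}\frac{\mu(dx)\,\mu(dy)}{\|x-y\|^\gamma}\right] \le C'' \iint_{E' \times E'} \|s-t\|^{-\gamma/2}\,\sigma(ds)\,\sigma(dt),
\]
which is finite because $\sigma$ has finite $(\gamma/2)$-energy (since $\gamma/2 < \alpha$ and $\sigma(B(t,r))\le Cr^\alpha$). Consequently the $\gamma$-energy of $\mu|_F$ is finite almost surely, and on the positive-probability event $\{\mu(F) > 0\}$ the measure $\mu|_F$ is a nontrivial Borel measure supported on $W(E) \cap F$ with finite $\gamma$-energy, so that $\dimh(W(E) \cap F) \ge \gamma$ on this event. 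Hence $\|\dimh(W(E) \cap F)\|_\infty \ge \gamma$, and letting $\gamma \uparrow \min\{d, 2\alpha\}$ and then $\alpha \uparrow \dimh E$ yields the matching lower bound.

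The main obstacle is not any single deep step but the clean packaging of the Gaussian negative-moment estimate together with the Frostman bound on $\sigma$; once \eqref{5.9} is in hand, these computations for $W$ proceed essentially as for ordinary Brownian motion in $\R^d$, with the crucial input being that $W(s)$ and the increment $W(t)-W(s)$ decorrelate well thanks to the pinned Brownian sheet decomposition.
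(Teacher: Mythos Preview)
Your proposal is correct and follows essentially the same route as the paper, which simply defers to the proof of Proposition 1.2 in \cite{KX2015}: the upper bound comes from $\dimh W(E)=\min\{d,2\dimh E\}$ a.s., and the lower bound is the image-measure/energy argument using a Frostman measure on $E$ and the fact that $F$ has positive Lebesgue measure. One small simplification: for the energy estimate you do not actually need the pinned decomposition or conditioning on $W(s)$, since the unconditional increment $W(t)-W(s)$ is already centered Gaussian with per-coordinate variance $\asymp \|s-t\|$ on $[a,b]^N$, which immediately gives $\E[\|W(s)-W(t)\|^{-\gamma}]\le C\|s-t\|^{-\gamma/2}$ for $\gamma<d$.
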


\begin{proof}
The proof is essentially the same as that of Proposition 1.2 of \cite{KX2015} and is thus omitted.
\end{proof}

\begin{proof}[Proof of Theorem \ref{T:dim:cap}]
First, suppose that $F$ has Lebesgue measure zero.
Let 
\[
\gamma^* = \sup\{ \gamma > 0 : C_\gamma(E \times F) > 0 \}.
\]
Proposition \ref{thm2.3} implies that $\gamma^* \le d$.
For any $\gamma \in (0, \gamma^*)$, we have $C_\gamma(E \times F) > 0$
and we can choose an integer $n \ge 1$ and $\alpha 
\in (0, 2)$ such that $d - \alpha n = \gamma$. If $X$ is an $n$-parameter, $d$-dimensional additive $\alpha$-stable process independent 
of $W$, then by Proposition \ref{thm6.1},
\[
\P(W(E) \cap F \cap X(\mathbb{R}^n_+\setminus\{0\}) \neq \varnothing) > 0.
\] 
This implies that
$\P(W(E) \cap F \cap X(\mathbb{R}^n_+\setminus\{0\}) \neq \varnothing \mid W) > 0$ on an event $A$ with positive probability. Then 
by Proposition \ref{prop4.1}, we have ${\EuScript Cap}_{d - \alpha n} \big(W(E) \cap F \big) > 0$ on the event $A$, and thus $\dimh \big(W(E) \cap F \big) 
\ge d - \alpha n = \gamma$ on the event $A$. It follows that $\|\dimh (W(E) \cap F)\|_\infty \ge \gamma$ for all $\gamma \in (0, \gamma^*)
$, hence  we have $\|\dimh (W(E) \cap F)\|_\infty \ge \gamma^*$. 

Next, we aim to show the reverse inequality.
This is obviously true if $\gamma^* = d$.
Let us now suppose that $\gamma^* < d$.
In this case, consider any $\gamma \in (\gamma^*, d)$, for which we have $C_\gamma(E \times F) = 0$. We can choose an integer 
$n \ge 1$ and $\alpha \in (0, 2)$ such that $d - \alpha n = \gamma$ and consider the corresponding additive stable process $X$. By Proposition \ref{thm6.1}, 
\[
\P(W(E) \cap F \cap X(\mathbb{R}^n_+\setminus \{0\}) \neq \varnothing) = 0.
\]
Then, Proposition \ref{prop4.1} shows that ${\EuScript Cap}_{d - \alpha n} \big(W(E) \cap F \big) = 0$ and thus $\dimh (W(E) \cap F) \le \gamma$ almost surely. 
This is true for arbitrary $\gamma > \gamma^*$.
Hence, $\|\dimh (W(E) \cap F)\|_\infty \leq \gamma^*$.

Now suppose that $F$ has a positive Lebesgue measure.
According to Proposition \ref{prop4.5}, we have
\[ \|\dimh (W(E) \cap F)\|_\infty = \min\{d, 2\dimh  E\}. \]
Note that $\dimh  F = \dim_{_{\rm P}} F = d$ because $F$ has positive Lebesgue measure. 
Then by Proposition \ref{thm2.3}, we have $\sup\{ \gamma > 0 : C_\gamma(E \times F) > 0 \} = \min\{d, 2\dimh  E\}$. The result follows.
\end{proof}

\begin{proof}[Proof of Theorem \ref{T:dim:dim-d}]
The proof is similar to the proof of Theorem 1.1 in \cite{KX2015}. Instead of Kaufman's uniform dimension result for the Brownian motion, 
we use the uniform dimension result for the Brownian sheet which holds for $d \ge 2N$ (see \cite{Mountford}, \cite{KWX06}). 
Other parts of the proof are almost the same. The only part that requires additional proof is the uniform estimates for hitting probabilities of the Brownian 
 sheet and additive stable process.
More precisely, suppose that $E \subset [1/q, q]^N$ and $F \subset [-q, q]^d$ where $1 < q < \infty$.
For each $r \in (0, 1)$, let $T(r)$ be the collection of open balls of radius $r^2$ contained in $[1/q, q]^N$, and $S(r)$ be the collection of open balls of radius $r$ in $[-q, q]^d$.
We need to show that there is a constant $0 < c_{\ref{9.19}} < \infty$ such that for all $r \in (0, 1)$,
\begin{align} \label{9.19}
\sup_{I \in T(r)} \sup_{J \in S(r)}  \P(W(I) \cap J \ne \varnothing) \leq c_{\ref{9.19}} \, r^{d},
\end{align}
and for an $n$-parameter, $\mathbb{R}^N$-valued additive $\alpha$-stable process $X(u) = X^{(1)}(u_1) + \dots + X^{(n)}(u_n)$ and 
$G= [0, A)^n \setminus [0, a)^n$, $0 < a < A < \infty$, there is $0 < c_{\ref{9.28}} < \infty$ such that for all $r \in (0, 1)$,
\begin{align} \label{9.28}
\sup_{I \in T(r)} \P(X(G) \cap I \ne \varnothing) \leq c_{\ref{9.28}} \, r^{2(N- \alpha n)}.
\end{align}

To prove \eqref{9.19}, take $I = B(t_0, r^2)$ and $J = B(x_0, r)$. For each $r \in (0, 1)$, let
\begin{align}
Z_r = \int_{B(t_0, r^2)} {\bf 1}_{B(x_0, 2r)}(W(t)) \, dt.
\end{align}
Fix any $s \in I$. Since $W_s(t)$ and $\mathscr{F}_s^\pi$ are independent for $t \succcurlyeq_\pi s$ and $\pi \subset \{1, \dots, N\}$, 
we have
\begin{align}\begin{aligned}\label{9.21}
&\sum_\pi \E(Z_r|\mathscr{F}^\pi_s)\\
& \ge \sum_\pi \int_{\substack{t \in B(t_0, r^2)\\ t \succcurlyeq_\pi s \quad}} \P(\|W_s(t) + C_{s, t} W(s) - x_0\| \leq 2r| \mathscr{F}_s^\pi )
\, dt \, {\bf 1}_{\{\|W(s) - x_0\| \leq r\}}\\
& \ge\sum_\pi \int_{\substack{t \in B(t_0, r^2)\\ t \succcurlyeq_\pi s \quad}} \P(\|W_s(t) - (1 - C_{s, t})x_0\| \leq r| \mathscr{F}_s^\pi )\, dt \, 
{\bf 1}_{\{\|W(s) - x_0\| \leq r\}}\\
& = \sum_\pi \int_{\substack{t \in B(t_0, r^2)\\ t \succcurlyeq_\pi s \quad}} \P(\|W_s(t) - (1 - C_{s, t})x_0\| \leq r)\, dt \, {\bf 1}_{\{\|W(s) - x_0\| 
\leq r\}}\\
& = \int_{B(t_0, r^2)\setminus\{s\}} \P(\|W_s(t) - (1 - C_{s, t})x_0\| \leq r)\, dt \, {\bf 1}_{\{\|W(s) - x_0\| \leq r\}}.
\end{aligned}\end{align}
Then by \eqref{5.9}, we further deduce that
\begin{align}\label{9.22}
\notag
&\sum_\pi \E(Z_r|\mathscr{F}^\pi_s)\\
\notag
& \ge c \int_{B(t_0, r^2)\setminus\{s\}}dt \int_{B(0, r)} dw \, \frac{1}{\|s - t\|^{d/2}}\, \exp\left(-\frac{\|w - (1 - C_{s, t})x_0\|^2}
{2 c_{\ref{5.9}} \|s - t\|} \right)\, {\bf 1}_{\{\|W(s) - x_0\| \leq r\}}\\
\notag
& \ge c \int_{B(t_0, r^2)\setminus\{s\}} dt \int_{B(0, 1)} \, \frac{r^{d} \,dw }{\|s - t\|^{d/2}}\, \exp\left(-\frac{3r^2 \|w\|^2 + 3 c_{\ref{5.8}}^2 
q^d \|s - t\|}{2 c_{\ref{5.9}} \|s - t\|} \right)\, {\bf 1}_{\{\|W(s) - x_0\| \leq r\}}\\
\notag
& \ge c' \int_{B(t_0, r^2)\setminus \{s\}} dt \int_{B(0, 1)} \frac{r^d dw}{(2r^2)^{d/2}} \, \exp\left(-\frac{3r^2\|w\|^2}
{2c_{\ref{5.9}}\cdot 2r^2} \right)\, {\bf 1}_{\{\|W(s) - x_0\| \leq r\}}\\
& \ge  c_{\ref{9.22}}\, r^{2N}\, {\bf 1}_{\{\|W(s) - x_0\| \leq r\}}.
\end{align}
\normalsize
Let $\{q_1, q_2, \dots \}$ be an enumeration of $\mathbb{Q}_+$, and let $I_m = I \cap \{q_1, \dots, q_m\}^N$.
Note that by the continuity of $W(t)$, $\{W(I) \cap J \ne \varnothing\} = \bigcup_{m=1}^\infty \{W(I_m) \cap J \ne \varnothing\}$. 
Since $W(I_m) \cap J \ne \varnothing$ implies $\|W(s) - x_0\| \le r$ for some $s \in I_m$, it follows from \eqref{9.22} that
\begin{align}\begin{aligned}
c_{\ref{9.22}} \, r^{2N} \, {\bf 1}_{\{W(I_m) \cap J \ne \varnothing\}} 
&\le \sup_{s \in \mathbb{Q}_+^N} \sum_\pi \E(Z_r|\mathscr{F}_s^\pi) \\
&\le \sum_\pi \sup_{s \in \mathbb{Q}_+^N} \E(Z_r|\mathscr{F}_s^\pi).
\end{aligned}\end{align}
Taking square and using the Cauchy--Schwarz inequality, we have
\begin{align}\begin{aligned}
(c_{\ref{9.22}} \, r^{2N})^2 \,{\bf 1}_{\{W(I_m) \cap J \ne \varnothing\}} 
& \le \left( \sum_\pi \sup_{s\in \Q^N_+} \E(Z_r|\mathscr{F}_s^\pi) \right)^2\\
&\le 2^N \sum_\pi \sup_{s \in \mathbb{Q}_+^N} 
\E(Z_r|\mathscr{F}_s^\pi)^2.
\end{aligned}\end{align}
Taking expectations on both sides yields
\begin{align}
c_{\ref{9.22}}^2 \, r^{4N} \, \P(W(I_m) \cap J \ne \varnothing) &\le 2^N \sum_\pi \E \left(\sup_{s \in \mathbb{Q}_+^N} 
\E(Z_r|\mathscr{F}_s^\pi)^2\right).
\end{align}
Since $\{\mathscr{F}^\pi_s\}$ is commuting with respect to $\preccurlyeq_\pi$, we can use Cairoli's maximal inequality to get
\begin{align}\begin{aligned}\label{9.26}
c_{\ref{9.22}}^2 \, r^{4N} \, \P(W(I_m) \cap J \ne \varnothing)
& \le 2^{4N} \E(Z_r^2) \\
&\le 2^{4N} \int_{B(t_0, r^2)}ds  \int_{B(t_0, r^2)} dt \, \P(\|W(t) - x_0\| \leq 2r)\\
&\le c_{\ref{9.26}} \, r^{4N + d}.
\end{aligned}\end{align}
Letting $m \to \infty$ yields \eqref{9.19}.

To prove \eqref{9.28}, let $I = B(t_0, r^2)$, $G' = [0, 2A)^n\setminus [0, a)^n$, $\mathscr{G}_u = \bigvee_{i=1}^n 
\sigma\{X^{(i)}(t) : t \le u_i\}$. Let
\begin{align}
Z_r = \int_{G'} {\bf 1}_{B(t_0, 2r^2)}(X(v)) \, dv
\end{align}
and
\begin{align}
\kappa(x) = \int_{[0, A]^n} g_{v}(x) \, dv, \quad x \in \mathbb{R}^N,
\end{align}
where $g_v$ is defined as in the proof of Proposition \ref{thm6.1}.
By Proposition 4.6 of \cite{KX2015}, there are constants $0 < c_{\ref{9.31}} < C_{\ref{9.31}} < \infty$ such that for all 
$x \in [-1, 1]^N$, 
\begin{align}\label{9.31}
\frac{c_{\ref{9.31}}}{\|x\|^{N - \alpha n}} \le \kappa(x) \le \frac{C_{\ref{9.31}}}{\|x\|^{N - \alpha n}}.
\end{align}
Then for any $u \in G$,
\begin{align} \begin{aligned}\label{9.32}
\E(Z_r|\mathscr{G}_u) &\ge \int_{G'} \P(\|X(v) - t_0\| \leq 2 r^2|\mathscr{G}_u) \, dv \, {\bf 1}_{\{\|X(u) - t_0\| \leq r^2\}}\\
& \ge \int_{v \in G', v \succcurlyeq u} \P(\|X(v) - X(u)\| \le r^2|\mathscr{G}_u) \, dv \, {\bf 1}_{\{\|X(u) - t_0\| \le r^2\}}\\
& = \int_{v \in G', v \succcurlyeq u} \int_{B(0, r^2)} g_{v - u}(x)\, dx\, dv \, {\bf 1}_{\{\|X(u) - t_0\|\le r^2\}}\\
& \ge \int_{B(0, r^2)} \kappa(x) \, dx \, {\bf 1}_{\{\|X(u) - t_0\| \le r^2\}}\\
& \ge \int_{B(0, r^2)} \frac{c_{\ref{9.31}}}{\|x\|^{N - \alpha n}} \, dx \, {\bf 1}_{\{\|X(u) - t_0\| \le r^2\}}\\
& \ge c_{\ref{9.32}}\, r^{2\alpha n} \, {\bf 1}_{\{\|X(u) - t_0\| \le r^2\}}.
\end{aligned}\end{align}
Let $G_m = G \cap \{q_1, \dots, q_m\}^n$.
Note that by the right continuity of $X(u)$, $\{X(G) \cap I \ne \varnothing\} = \bigcup_{m=1}^\infty \{X(G_m) \cap I \ne \varnothing\}$. 
Since $X(G_m) \cap I \ne \varnothing$ implies $\|X(u) - t_0\| \le r^2$ for some $u \in G_m$, it follows from \eqref{9.32} that
\begin{align}
c_{\ref{9.32}}^2\, r^{4\alpha n} \, {\bf 1}_{\{X(G_m) \cap I \ne \varnothing\}} \le \sup_{u \in \mathbb{Q}_+^n} \E(Z_r|\mathscr{G}_u)^2.
\end{align}
Taking expectation and using Cairoli's maximal inequality, we obtain
\begin{align}\begin{aligned}\label{9.33}
c_{\ref{9.32}}^2 \, r^{4\alpha n} \, \P(X(G_m) \cap I \ne \varnothing) & \le 4^n \E(Z_r^2).
\end{aligned}\end{align}
Let $Y_1 = X(u) - X(u \wedge v)$ and $Y_2 = X(v) - X(u \wedge v)$. By the independence of $X(u \wedge v), Y_1, Y_2$, and 
\eqref{9.31}, we have the following upper bound:
\begin{align}\begin{aligned}\label{9.35}
\E(Z_r^2) &= \iint_{G' \times G'} \P(\|X(u) - t_0\| \le 2r^2, \|X(v) - t_0\| \le 2r^2) \, du\, dv\\
& \le \iint_{G' \times G'} \P(\|X(u) - t_0\| \le 2r^2, \|X(v) - X(u)\| \le 4r^2) \, du\, dv\\
& \le \iint_{G' \times G'} \E\left[ \P(\|Y_1 - X(u \wedge v) - t_0\| \le 2r^2|Y_1, Y_2)\cdot {\bf 1}_{\{\|Y_2 - Y_1\| \le 4r^2\}}\right] \, du \, dv\\
& \le \iint_{G'\times G'} \P(\|X(u\wedge v)\| \le 2r^2)\, \P(\|X(v) - X(u)\| \le 4r^2) \, du \, dv\\
& \le \sup_{u, v \in G'} g_{u \wedge v}(0) \cdot |B(0, 2r^2)| \iint_{G'\times G'}\, du \, dv \int_{B(0, 4r^2)} g_{v - u}(x) \, dx\\
& \le c\, r^{2N} \int_{B(0, 4r^2)} \kappa(x) \, dx\\
& \le c\, r^{2N} \int_{B(0, 4r^2)} \frac{C_{\ref{9.31}}}{\|x\|^{N -\alpha n}} \, dx\\
& \le c_{\ref{9.35}} \, r^{2N + 2\alpha n}.
\end{aligned}\end{align}
Combining this with \eqref{9.33} and letting $m \to \infty$ yields \eqref{9.28}.
With \eqref{9.19} and \eqref{9.28} in place, we can follow the proof of \cite[Theorem 1.1]{KX2015} to complete the proof of Theorem \ref{T:dim:dim-d}.
\end{proof}

\noindent{\bf Acknowledgments}:
The research of Y. Xiao was partially supported by the NSF grant DMS-2153846.
C. Y. Lee was partially supported by a research startup fund of the Chinese University of Hong Kong, Shenzhen and the Shenzhen Peacock fund 2025TC0013.


\begin{thebibliography}{99}


\bibitem{BP17}
R. Balka and Y. Peres, Uniform dimension results for fractional Brownian motion. J. Fractal
Geom. 4 (2017), 147--183.

\bibitem{BLX09}
H. Bierm\'e, C. Lacaux and Y. Xiao, Hitting probabilities
and the Hausdorff dimension of the inverse images of anisotropic
Gaussian random fields. Bull. London Math. Soc. 41
(2009), 253--273.

\bibitem{Billingsley}
P. Billingsley, Convergence of probability measures.
John Wiley \& Sons, Inc., 1999.

%\bibitem{CX2012}
%Z. Chen and Y. Xiao, On intersections of independent anisotropic
%Gaussian random fields. {\it Sci. China Math.} {\bf 55} (2012), 2217--2232.

\bibitem{Conway}
J. B. Conway, A course in functional analysis, Graduate Texts in Mathematics, Vol. 96, Springer-Verlag, 1990.

\bibitem{Doob}
J. L. Doob, Classical potential theory and its probabilistic counterpart, Springer-Verlag, 2002.

%\bibitem{Kahane}
%J.-P. Kahane, {\it Some random series of functions}, second edition, Cambridge Univ. Press, 1985.
%
%\bibitem{Dalang22}
%R. Dalang

\bibitem{DKN13}
R. C. Dalang, D. Khoshnevisan and E. Nualart,  Hitting probabilities for systems of non-linear stochastic heat 
equations in spatial dimension $k \ge 1$. Stoch. Partial Differ. Equ. Anal. Comput. 1 (2013), no. 1, 94--151.

\bibitem{DN04}
R. C. Dalang and E. Nualart, Potential theory for hyperbolic SPDEs. Ann. Probab. 32 (2004), no 3A, 2099--2148.

\bibitem{DP20}
R. C. Dalang and  F. Pu,  Optimal lower bounds on hitting probabilities for stochastic heat equations in spatial dimension 
$k \ge 1$. Electron. J. Probab. 25 (2020), Paper No. 40, 31 pp.

\bibitem{DP24}
R. C. Dalang and  F. Pu, Hitting with probability one for stochastic heat equations with additive noise. J. Theoret. Probab. 
37 (2024), no. 4, 3479--3495.

\bibitem{DS15}
R. C.  Dalang and M. Sanz-Sol\'e, Hitting probabilities for nonlinear systems of stochastic waves. 
Mem. Amer. Math. Soc. 237 (2015), no. 1120, v+75 pp.

\bibitem{EH25}
M. Erraoui and Y. Hakiki, Fractional Brownian motion with deterministic drift:
Fractal codimension formulae. Electron. J. Probab., to appear.

\bibitem{Fal}
K. J. Falconer, Fractal geometry -- mathematical foundations and applications. John Wiley \& Sons
Ltd., Chichester, 1990.

\bibitem{Fal89}
K. J. Falconer, Dimensions and measures of quasi self-similar sets.  Proc. Amer. Math. Soc. 106 (1989), 
no. 2, 543--554.

\bibitem{K}
D. Khoshnevisan, Multiparameter processes, an introduction to random fields, Springer-Verlag, 2002.

\bibitem{KS1999}
D. Khoshnevisan and Z. Shi, Brownian sheet and capacity, Ann. Probab. 27 (1999), no. 3, 1135--1159.

\bibitem{KWX06}
D. Khoshnevisan, D. Wu and Y. Xiao, Sectorial local non-determinism and the geometry of the Brownian sheet. 
Electron. J. Probab. 11 (2006), 817--843.

\bibitem{KX2002}
D. Khoshnevisan and Y. Xiao, Level sets of additive L\'{e}vy processes, Ann. Probab. 30 (2002), no. 1, 62--100.

\bibitem{KX2005}
D. Khoshnevisan and Y. Xiao, L\'{e}vy processes: capacity and Hausdorff dimension, Ann. Probab. 33 (2005), no. 3, 841--878. 

\bibitem{KX2007}
D. Khoshnevisan and Y. Xiao, Images of the Brownian sheet, Trans. Amer. Math. Soc. 359 (2007), no. 7, 3125--3151.

\bibitem{KX2015}
D. Khoshnevisan and Y. Xiao, Brownian motion and thermal capacity, Ann. Probab. 43 (2015), no. 1, 405--434.

\bibitem{LSXY23}
C. Y. Lee, J. Song, Y. Xiao and W. Yuan, Hitting probability of Gaussian random 
fields with application to collision of eigenvalues of random matrices. Trans. Amer. Math. Soc.
 376 (2023), no. 6, 4273--4299.

\bibitem{Mattila}
P. Mattila, Geometry of sets and measures in Euclidean spaces, fractals and rectifiability, Cambridge Univ. Press., 1995.

\bibitem{Mountford}
T. S. Mountford, Uniform dimension results for the Brownian sheet,  Ann. Probab. 17 (1989), no. 4, 1454--1462.

%\bibitem{R}
%W. Rudin, {\it Functional analysis}, second edition, McGraw-Hill, 1991.

\bibitem{ST}
G. Samorodnitsky and M. Taqqu, Stable non-Gaussian random processes, Chapman \& Hall, 1994.

\bibitem{Taylor:TW} 
S. J. Taylor and N. A. Watson,
A Hausdorff measure classification of polar sets for the heat equation,
Math. Proc. Cambridge Philos. Soc. 97 (1985), no. 2, 	325--344.
%
\bibitem{Watson:TC}
N. A. Watson, Thermal capacity, Proc. London Math. Soc. (3) 37  (1978), no. 2, 342--362.

\bibitem{Watson} 
N. A. Watson, Green functions, potentials, and the Dirichlet problem for
the heat equation, Proc. London Math. Soc.\ (3) 33  (1976), no. 2,
251--298 [corrigendum: Proc. London Math. Soc.\ (3) 37 (1978), no. 1,
32--34.]


\bibitem{X09}
Y. Xiao, Sample path properties of anisotropic Gaussian random fields.
In: A Minicourse on Stochastic Partial Differential Equations,
(D. Khoshnevisan and F. Rassoul-Agha,
editors), Lecture Notes in Math. 1962, pp. 145--212.
Springer, New York, 2009.


\end{thebibliography}
\end{document}